\newtheorem{theorem}{Theorem}[section]
\newtheorem{lemma}[theorem]{Lemma}
\newtheorem{proposition}[theorem]{Proposition}
\newtheorem{conjecture}[theorem]{Conjecture}
\newtheorem*{conjecture*}{\textbf{Conjecture}}
\newtheorem{corollary}[theorem]{Corollary}
\newtheorem{corollary*}[theorem]{Corollary 4.6 and Proof of Theorem 1.2}
\newtheorem{claim}{Claim}[theorem]
\newtheorem{fact}{Fact}[theorem]
\newcommand{\Fix}[1]{\text{Fix} (#1)}
\newcommand{\F}{\mathbb{F}}
\DeclareMathOperator{\Fr}{Fr}
\DeclareMathOperator{\Gl}{Gl}
\DeclareMathOperator{\Span}{span}
\DeclareMathOperator{\lcm}{lcm}
\newcommand{\Nir}[1]{\textcolor{red}{Nir: #1}}
\theoremstyle{definition}
\newtheorem{definition}{Definition} 
\newtheorem{example}{Example}[theorem]
\newtheorem*{notation}{\textbf{Notation}}
\newtheorem{remark}{Remark}[theorem]
\providecommand{\keywords}[2]
{
  \small	
  \textit{Keywords: } #1.
  
  \textit{2020 AMS Subject Classification:} #2.
}
\author{Adithya Balachandran, Nir Gadish\footnote{N.G. is supported by NSF Grant No. DMS-1902762.} , Andrew Huang, Siwen Sun}
\date{May 25, 2021}
\title{Product Expansions of $q$-Character Polynomials}
\begin{document}

\maketitle

\begin{abstract}
The ring of $q$-character polynomials is a $q$-analog of the classical ring of character polynomials for the symmetric groups. This ring consists of certain class functions defined simultaneously on the groups $\Gl_n(\F_q)$ for all $n$, which we also interpret as statistics on matrices.
Here we evaluate these statistics on all matrices and work towards computing the structure constants of the product in this ring.

We show that the statistics are periodically polynomial in $q$, and governed by universal polynomials $P_{\lambda,\mu}(q)$ which we compute explicitly, indexed by pairs of integer partitions. The product structure is similarly polynomial in $q$ in many cases, governed by polynomials $R_{\lambda,\mu}^{\nu}(q)$ indexed by triples of partitions, which we compute in some cases.

Our calculations seem to exhibit several unexpected patterns. Mainly, we conjecture that certain indecomposable statistics generate the whole ring, and indeed prove this for statistics associated with matrices consisting of up to 2 Jordan blocks. Furthermore, the coefficients we compute exhibit surprising stability phenomena, which in turn reflect stabilizations of joint moments as well as multiplicities in the irreducible decomposition of tensor products of representations of $\Gl_n(\F_q)$ for $n\gg 1$.
We use this stabilization to compute the correlation of the number of unipotent Jordan blocks of two sizes.

%We derive general criteria for determining which statistics have nonzero expansion coefficients in the product of two fixed statistics. To this end, we introduce an algorithm that computes expansion coefficients in general, which we furthermore use to give closed form expansions in some cases. We conjecture that certain indecomposable statistics generate the whole ring, and indeed prove this to be the case for statistics associated with matrices consisting of up to 2 Jordan blocks. The coefficients we compute exhibit surprising stability phenomena, which in turn reflect stabilizations of joint moments as well as multiplicities in the irreducible decomposition of tensor products of representations of finite general linear groups.
\end{abstract}

\keywords{character polynomials, finite general linear group, matrix statistics}{20G40, 05E05, 05A10}

\section{Introduction}
In this paper we study the ring of $q$-character polynomials -- a collection of class functions on invertible matrices $\Gl_n(\F_q)$, where $\F_q$ is a field of $q$ elements and $n$ ranges over all positive integers. These are $q$-analogs of character polynomials of symmetric groups, introduced in \cite{1803.04155}, and they arise as the stable characters of finitely generated sequences of representations $\Gl_n(\F_q)\curvearrowright V_n$ as $n\to \infty$ known as $\mathtt{VI}$-modules.
When considered as statistics on matrix groups, these functions exhibit a uniform behavior as $q$ and $n$ vary. The main goal of this paper is to work towards understanding the product structure of this ring relative to a particular natural basis.

As a motivating example, consider the following invariant of matrices:
$$\Fix{B}_q=|\{v \in \F_q^n \mid Bv=v \}|.$$
Fulman and Stanton \cite{fulman-fixed} compute the distribution of this function as a statistic on $\Gl_n(\F_q)$ as a uniform probability space. One notable consequence of their calculation is that the moments of this statistic become independent of $n$ as $n\to\infty$ (e.g. the expectation is always $2$). This pattern is explained in \cite{1803.04155} as a general stability phenomenon of $q$-character polynomials, of which $\Fix{\cdot}_q$ is the most basic example. The general case has fixed vectors replaced by fixed subspaces -- let us recall the general definition.

\begin{definition}[\textbf{$q$-Character Polynomials}, \cite{1803.04155}]
Given a finite field of $q$ elements $\F_q$, let $A$ be any fixed $m\times m$ matrix. The \emph{$q$-character polynomial} associated to $A$ is the following statistic, defined simultaneously on square matrices of any size:
\[
\text{for }B\in \operatorname{End}(\F_q^n) \quad  X_A(B,q) = |\{ W \leq \F_q^n | \dim W = d \text{ satisfies } B(W)\subseteq W \text{ and } B|_W \sim A \} |.
\]
Here $B|_W\sim A$ refers to matrix similarity. Furthermore, since $A$ and $B$ are also transformations over field extensions $\F_{q^m}$, we similarly consider $X_A(B,q^m)$ for all $m\geq 1$. When the cardinality of the field is clear from context, we will suppress it from the notation and write $X_A(B)$.

A \emph{$q$-character polynomial} is any $\mathbb{Q}$-linear combination of functions $X_A : \bigcup_{n\geq 1} \Gl_n(\F_q) \to \mathbb{Q}$.
\end{definition}
One of the main contributions of this paper is the complete evaluation of $q$-character polynomials. Interestingly, though the functions are not defined as such, they are often polynomial functions in $q$.

\begin{theorem}[\textbf{Periodic Polynomiality}]\label{theorem-main:polynomiality}
If $A$ and $B$ are matrices over $\F_q$, the statistics $X_A(B,q^m)$ coincide with integer polynomials in $q^m$ in a periodic manner. Explicitly, if $d$ is the degree of the splitting field of the characteristic polynomial of $A$ over $\F_q$, then there exist polynomials $P_1(t),\ldots,P_d(t)\in \mathbb{Z}[t]$ such that if $m\equiv i$ modulo $d$ then
\begin{equation}
    X_A(B,q^m) = P_i(q^m).
\end{equation}
In particular, the zeta function generated by these statistics is rational, though we omit the proof.

Furthermore, if $\lambda$ and $\mu$ are two integer partitions, let $J_\lambda$ and $J_\mu$ be unipotent matrices in Jordan form with block sizes specified by the respective partition (and so defined over all fields). Then there exists a single polynomial $P_{\lambda,\mu}(t)$ which coincides with the statistic $X_{J_\lambda}(J_\mu,p^m)$ for all prime powers.
\end{theorem}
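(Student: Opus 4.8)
The plan is to translate $X_A(B,q^m)$ into a count of submodules of a finite torsion module over the principal ideal domain $\F_q[t]$ (and its base changes to $\F_{q^m}$), reduce to a local computation by primary decomposition, recognize the local count as the evaluation of a Hall polynomial, and then keep careful track of how extension of scalars from $\F_q$ to $\F_{q^m}$ changes the relevant residue fields -- this last bookkeeping is exactly what produces the period $d$. Concretely, regard $\F_q^n$ as an $\F_q[t]$-module $V_B$ with $t$ acting by $B$, and let $V_A$ be $\F_q^{\dim A}$ with $t$ acting by $A$; then a subspace $W$ is $B$-invariant with $B|_W\sim A$ precisely when it is an $\F_q[t]$-submodule of $V_B$ isomorphic to $V_A$, so $X_A(B)=\#\{W\le V_B\mid W\cong V_A\}$. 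Both modules are finite, hence finitely generated torsion, so each has a canonical primary decomposition $V=\bigoplus_f V(f)$ indexed by the monic irreducibles $f\in\F_q[t]$, and every submodule decomposes compatibly; writing $\chi_A,\chi_B$ for the characteristic polynomials, this gives
$$X_A(B)=\prod_f \#\{W'\le V_B(f)\mid W'\cong V_A(f)\},$$
a finite product in which only the $f$ dividing $\chi_A$ contribute a factor $\ne 1$ (such an $f$ with $f\nmid\chi_B$ contributes $0$). For each such $f$, the component $V_B(f)$ is a finite-length module over the discrete valuation ring $\F_q[t]_{(f)}$, whose residue field $\F_q[t]/(f)$ has size $q^{\deg f}$; let $\lambda^{(f)}$ and $\nu^{(f)}$ be the partition-types of $V_B(f)$ and $V_A(f)$, so the $f$-factor counts submodules of type $\nu^{(f)}$ in a module of type $\lambda^{(f)}$.

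By Hall's theorem (Macdonald, \emph{Symmetric Functions and Hall Polynomials}, Ch.~II), the number of submodules of type $\nu$ in a module of type $\lambda$ over a discrete valuation ring with residue field of size $Q$ equals $\sum_\kappa g^\lambda_{\kappa\nu}(Q)$, a finite sum of Hall polynomials; this is a polynomial $h_{\lambda,\nu}(Q)\in\mathbb{Z}[Q]$ with nonnegative integer coefficients, the same for every such ring. Writing $h_f:=h_{\lambda^{(f)},\nu^{(f)}}$ and $e_f:=\deg f$, we get $X_A(B,q)=\prod_{f\mid\chi_A}h_f(q^{e_f})$. The key is that the same argument runs over $\F_{q^m}$. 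Since finite fields are perfect, each monic irreducible $f/\F_q$ is separable and factors over $\F_{q^m}$ into $\gcd(m,e_f)$ distinct monic irreducibles of degree $e_f/\gcd(m,e_f)$. Base-changing the canonical primary decompositions along $\F_q\hookrightarrow\F_{q^m}$ and applying the Chinese Remainder Theorem, the $f$-primary part of $V_B\otimes_{\F_q}\F_{q^m}$ splits into $\gcd(m,e_f)$ summands, each again of type $\lambda^{(f)}$ but now over a DVR with residue field of size $q^{\lcm(m,e_f)}$; likewise for $V_A$. Hence
$$X_A(B,q^m)=\prod_{f\mid\chi_A}h_f\!\left(q^{\lcm(m,e_f)}\right)^{\gcd(m,e_f)}.$$

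Now $d$, the degree of the splitting field of $\chi_A$ over $\F_q$, equals $\lcm\{e_f\mid f\text{ divides }\chi_A\}$, so every $e_f$ divides $d$; therefore $\gcd(m,e_f)$ and $\lcm(m,e_f)=m\,e_f/\gcd(m,e_f)$ depend on $m$ only through $m\bmod d$, and $q^{\lcm(m,e_f)}=(q^m)^{e_f/\gcd(m,e_f)}$. Thus for $m\equiv i\pmod d$, setting
$$P_i(t):=\prod_{f\mid\chi_A}h_f\!\left(t^{\,e_f/\gcd(i,e_f)}\right)^{\gcd(i,e_f)}\ \in\ \mathbb{Z}[t]$$
gives $X_A(B,q^m)=P_i(q^m)$, which is the first assertion. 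For the final "furthermore" claim, $\chi_{J_\lambda}=(t-1)^{|\lambda|}$ has the single irreducible factor $t-1$, of degree $1$, which remains irreducible of degree $1$ over every extension; hence $d=1$, the unique polynomial is $P_{\lambda,\mu}(t)=h_{\mu,\lambda}(t)=\sum_\kappa g^\mu_{\kappa\lambda}(t)$, and since Hall polynomials are universal and $J_\lambda,J_\mu$ are defined over $\mathbb{Z}$, the identity $X_{J_\lambda}(J_\mu,p^m)=P_{\lambda,\mu}(p^m)$ holds for every prime power $p^m$ simultaneously.

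The main obstacle is the scalar-extension bookkeeping: one must verify cleanly that the canonical $\F_q[t]$-primary decomposition refines, after $\otimes_{\F_q}\F_{q^m}$, to the $\F_{q^m}[t]$-primary decomposition in the expected way -- each primary summand retaining its partition-type while its residue field grows from $\F_{q^{e_f}}$ to $\F_{q^{\lcm(m,e_f)}}$ and acquires multiplicity $\gcd(m,e_f)$. Everything else is routine: the reduction to local counts is the structure theory of modules over a PID, the local count is Hall's theorem verbatim, and the periodicity is then elementary arithmetic. A minor point is to dispose of the degenerate cases -- $\dim A=0$, or $f\mid\chi_A$ but $f\nmid\chi_B$ -- in which the relevant factor is a constant and hence trivially polynomial.
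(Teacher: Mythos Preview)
Your argument is correct, and it reaches exactly the same product formula as the paper,
\[
X_A(B,q^m)=\prod_{f\mid\chi_A}h_f\!\left(q^{\lcm(m,e_f)}\right)^{\gcd(m,e_f)},
\]
which is the paper's Theorem~\ref{theorem: reducing to unipotents} with $h_f=P_{\lambda_i,\mu_i}$ and $e_f=d_i$; however, the route is genuinely different. The paper works bare-handed: Lemma~\ref{lem:evaluations of disjoint eigenvalues} plays the role of your primary decomposition, Lemma~\ref{lem: changing eigenvalue} is your observation that the local count depends only on the partition type, and Lemmas~\ref{lemma: specific field extension}--\ref{field_extension_evaluation} carry out your ``scalar-extension bookkeeping'' by an explicit Galois-descent bijection between $A$-type subspaces over $\F_q$ and $M_\lambda$-type subspaces over $\F_{q^d}$, rather than via CRT on $\F_{q^m}[t]/(f^N)$. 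For the polynomiality of the unipotent piece, you invoke Hall's theorem from Macdonald as a black box, whereas the paper proves it from scratch in \S\ref{sec: evaluations} by a direct inductive count (Lemma~\ref{lemma:equal blocks} and Theorem~\ref{theorem:general evaluations}), obtaining in the process a closed formula for $P_{\lambda,\mu}(q)$ as a product of $q$-binomials and $q$-powers. Your approach is shorter and more conceptual, and it makes the universality over all prime powers immediate; the paper's approach is fully self-contained and delivers the explicit evaluation formula that is needed downstream for the recursive computation of the product-expansion coefficients $R^{\nu}_{\lambda,\mu}$.
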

In fact, we show that the polynomials $P_{\lambda,\mu}(t)$ associated with statistics of unipotent matrices determine all other statistics $P_{A,B}(t)$ -- see Theorem \ref{theorem: reducing to unipotents} for the precise formula and a proof of Theorem \ref{theorem-main:polynomiality}.  Some readers might be interested to learn that the periodic list of polynomials in Theorem \ref{theorem-main:polynomiality} takes a form familiar from number theory and combinatorics: in many cases $P_i(t) = P_{\lambda,\mu}(t^{\lcm(i,d)})^{\gcd(i,d)}$ for some fixed $d\geq 1$.
%\[ P_{A,B} (q^m) = \prod_i  P_{\lambda_i, \mu_i} (q^{\lcm(m,d_i)})^{\gcd(m,d_i)} \]
In \S\ref{sec: evaluations} we compute the polynomials $P_{\lambda,\mu}(t)$ explicitly, hence all evaluations $X_A(B)$ are completely determined.

Moving on to products, it is shown in \cite[Corollary 3.9]{GADISH2017450} that the collection of $q$-character polynomials is closed under pointwise multiplication. Combined with the stabilization of expectation \cite[Corollary 4.6]{GADISH2017450}, this product structure is responsible to the aforementioned stabilization of higher joint moments of $X_A$ and similar $q$-character polynomials. Recalling that $q$-character polynomials are characters of certain sequences of representations $\Gl_n(\F_q)\curvearrowright V_n$, the expectation of products also describe multiplicities in the irreducible decomposition of $V_n$, which therefore similarly stabilize as $n\to \infty$ (see \cite{GADISH2017450} for details). 

To the best of our understanding, the product structure of $q$-character polynomials has not been previously explored, and is the original motivation for this work. Our main results regarding this structure take a similar form to Theorem \ref{theorem-main:polynomiality}.
\begin{theorem}[\textbf{Product expansions}] \label{theorem-main: product expansions}
There exist integer polynomials labeled by triples of partitions $R_{\lambda,\mu}^{\nu}(t)\in \mathbb{Z}[t]$, such that if $J_\lambda$ and $J_\mu$ are unipotent matrices in Jordan form with block sizes specified by the respective partitions, then
\[
X_{J_\lambda}\cdot X_{J_\mu} = \sum_{\nu} R_{\lambda,\mu}^{\nu}(q^d) X_{J_{\nu}}
\]
when considering the statistics as defined on matrices over the field $\F_{q^d}$.

For general matrices $A$ and $B$ the analogous expansion $X_A\cdot X_B$ is expressible in term of $R_{\lambda,\mu}^{\nu}$ and the characteristic polynomials of $A$ and $B$, so long as these characteristic polynomials split over $\F_q$. See Theorem~\ref{theorem: general coefficients in terms of unipotents} for a general formula.

% Moreover, for any two matrices $A$ and $B$ over $\F_q$, the expansion coefficients in the product
% \[
% X_A \cdot X_B = \sum_{C}R_{A,B}^C X_C
% \]
% are determined explicitly by the polynomials $R_{\lambda,\mu}^\nu$, and are in particular themselves polynomial in $q$ (see Theorem~\ref{theorem: reducing products to unipotents} for the explicit formula).
\end{theorem}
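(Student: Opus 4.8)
The plan is to read the pointwise product as a count of \emph{pairs} of invariant subspaces and then to organize that count by the subspace they span. Fix the ground field $\F_Q$, $Q:=q^d$, and an operator $B$ on $\F_Q^n$. By definition $X_{J_\lambda}(B)\cdot X_{J_\mu}(B)$ is the number of pairs $(W_1,W_2)$ of $B$-invariant subspaces with $B|_{W_1}\sim J_\lambda$ and $B|_{W_2}\sim J_\mu$. For such a pair, $W:=W_1+W_2$ is $B$-invariant, and since $B-\mathrm{Id}$ is nilpotent on each $W_i$ it is nilpotent on $W$; so $B|_W$ is unipotent, of some Jordan type $\nu$ with $|\nu|\le|\lambda|+|\mu|$. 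Sorting the pairs by $W$, and using that any two unipotent operators of type $\nu$ are conjugate via an isomorphism carrying invariant subspaces to invariant subspaces and preserving their types and sums, the number of pairs supported on a given $W$ of type $\nu$ is a constant $N^{\nu}_{\lambda,\mu}$ depending only on $\nu$ and $Q$. Summing this constant over all $B$-invariant $W$ with $B|_W\sim J_\nu$ recovers $X_{J_\nu}(B)$, so
\[
X_{J_\lambda}\cdot X_{J_\mu}=\sum_{\nu}N^{\nu}_{\lambda,\mu}(Q)\,X_{J_\nu},
\]
a finite sum; we set $R^{\nu}_{\lambda,\mu}:=N^{\nu}_{\lambda,\mu}$, uniqueness of the coefficients following from linear independence of the $X_{J_\nu}$.

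It remains to prove $N^{\nu}_{\lambda,\mu}(Q)$ is an integer polynomial in $Q$. I would present $(W,B|_W)$ as a finite module $M_\nu$ over the DVR $\F_Q[[s]]$ with $s$ acting as $B-\mathrm{Id}$; then $B$-invariant subspaces on which $B$ is unipotent of type $\kappa$ are exactly the submodules of type $\kappa$, so $N^{\nu}_{\lambda,\mu}$ counts pairs of submodules $(W_1,W_2)\subseteq M_\nu$ of types $\lambda$, $\mu$ with $W_1+W_2=M_\nu$. To isolate the condition $W_1+W_2=M_\nu$, I would Möbius-invert over the submodule lattice of $M_\nu$: for a submodule $L$, the number of pairs with $W_1+W_2\subseteq L$ is $a_\lambda(L)\,a_\mu(L)$ with $a_\kappa(L)$ the number of type-$\kappa$ submodules of $L$, which depends only on the type of $L$ and is a sum of Hall polynomials, hence a polynomial in $Q$ with nonnegative integer coefficients. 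The Möbius function of the interval $[L,M_\nu]$ is that of the submodule lattice of $M_\nu/L$, so it vanishes unless $M_\nu/L$ is semisimple, in which case it equals $(-1)^k Q^{\binom{k}{2}}$ with $k=\dim_{\F_Q}(M_\nu/L)$; and the count of submodules $L$ of a given type with semisimple quotient of fixed dimension is again a Hall polynomial in $Q$. Assembling these pieces writes $N^{\nu}_{\lambda,\mu}(Q)$ as a $\mathbb Z$-polynomial in $Q$, and makes it computable in examples.

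For arbitrary $A$ and $B$ whose characteristic polynomials split over $\F_q$, I would reduce to the unipotent case by primary decomposition. Writing $\F_q^n=\bigoplus_c V_c$ for the generalized eigenspace decomposition of $B$, every $B$-invariant subspace $W$ splits compatibly as $\bigoplus_c (W\cap V_c)$, and $B|_W\sim A$ exactly when, for each nonzero eigenvalue $c$, the operator $(c^{-1}B)|_{W\cap V_c}$ is unipotent of the type $\lambda^{(c)}$ that $A$ exhibits at $c$. Hence $X_A$ factors as a product over $c$ of unipotent statistics applied to the operators $c^{-1}B|_{V_c}$, and likewise $X_B$ with types $\mu^{(c)}$ read off from the second characteristic polynomial. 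Regrouping $X_A\cdot X_B$ by eigenvalue, applying $X_{J_{\lambda^{(c)}}}\cdot X_{J_{\mu^{(c)}}}=\sum_{\nu^{(c)}}R^{\nu^{(c)}}_{\lambda^{(c)},\mu^{(c)}}(Q)\,X_{J_{\nu^{(c)}}}$ in each factor, and recombining, gives
\[
X_A\cdot X_B=\sum_{(\nu^{(c)})_c}\Bigl(\prod_c R^{\nu^{(c)}}_{\lambda^{(c)},\mu^{(c)}}(Q)\Bigr)\,X_{A'},\qquad A'=\textstyle\bigoplus_c\bigl(c\,\mathrm{Id}+J_{\nu^{(c)}}\bigr),
\]
which is the formula promised in Theorem~\ref{theorem: general coefficients in terms of unipotents}.

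I expect the crux to be the middle step: showing $N^{\nu}_{\lambda,\mu}$ is governed by one integer polynomial in $Q$, with the bookkeeping — which submodule types $L$ occur, controlling the submodule-lattice Möbius function uniformly in $Q$, and pinning down exactly which triples $(\lambda,\mu,\nu)$ contribute — clean enough to actually evaluate $R^{\nu}_{\lambda,\mu}(t)$. The first and third steps are essentially formal, resting only on the dictionary between $B$-invariant subspaces and $\F_Q[[s]]$-submodules and on the functoriality of primary decomposition (compare Theorem~\ref{theorem: reducing to unipotents}).
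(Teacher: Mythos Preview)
Your argument is correct and takes a genuinely different route from the paper's. You give a \emph{direct} combinatorial interpretation of the expansion coefficient: $R^{\nu}_{\lambda,\mu}(Q)$ is the number of pairs $(W_1,W_2)$ of submodules of the $\F_Q[[s]]$-module $M_\nu$ with types $\lambda,\mu$ and $W_1+W_2=M_\nu$, and you extract polynomiality in $Q$ via M\"obius inversion on the submodule lattice together with Hall-polynomial machinery. The paper instead proceeds indirectly: it sets up a triangular linear system by evaluating the identity $X_{J_\lambda}\cdot X_{J_\mu}=\sum_\nu r^\nu X_{J_\nu}$ at $J_\nu$ for $\nu$ of increasing size (Proposition~\ref{prop:A smaller than B} and Theorem~\ref{theorem: recursion formula}), obtaining the recursion $R^\nu_{\lambda,\mu}=P_{\lambda,\nu}P_{\mu,\nu}-\sum_{\|\nu'\|<\|\nu\|}R^{\nu'}_{\lambda,\mu}P_{\nu',\nu}$; polynomiality then follows by induction from the polynomiality of the evaluation numbers $P_{\lambda,\nu}$ proved in Theorem~\ref{theorem:general evaluations}. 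Your reduction of the split case to the unipotent one via primary decomposition is essentially the same as the paper's Theorem~\ref{theorem: general coefficients in terms of unipotents}.

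Each approach buys something. Yours yields a closed (non-recursive) formula
\[
R^{\nu}_{\lambda,\mu}(Q)=\sum_{k\ge 0}(-1)^k Q^{\binom{k}{2}}\sum_{\tau} g^{\nu}_{\tau,(1^k)}(Q)\,P_{\lambda,\tau}(Q)\,P_{\mu,\tau}(Q),
\]
and, before any inversion, the nontrivial fact that $R^{\nu}_{\lambda,\mu}(Q)\ge 0$ for every prime power $Q$ (since it literally counts pairs) --- something the paper's recursion does not make visible. The paper's approach is more self-contained (it never invokes Hall polynomials or the M\"obius function of a module lattice), and the recursion it produces is precisely the computational engine used in \S\ref{sec: expansion coefficients} to generate explicit expansions and the vanishing results (Corollaries~\ref{corollary: no new eigenvalues} and~\ref{corollary: largest Jordan block}) via Lemma~\ref{lemma: meta}. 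Two small points worth tightening in your write-up: the linear independence of the $X_{J_\nu}$ you invoke for uniqueness is exactly the triangularity of Proposition~\ref{prop:A smaller than B}; and the vanishing of $\mu(L,M_\nu)$ for non-semisimple $M_\nu/L$ deserves a one-line justification (every maximal submodule contains $sM_\nu$, so if $sM_\nu\not\subseteq L$ then $L$ is not a meet of coatoms).
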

%Note that $R_{A,B}^C$ are polynomials in $q$ coinciding with the polynomial $R_{\lambda_A, \lambda_B}^{\lambda_C}(q)$ labeled by the partition representations of the block sizes of the Jordan forms of $A, B,$ and $C$.
The main tool that sheds light on the coefficients of generalized product expansions is a recursive algorithm we give in \S\ref{sec: recursive formula} for computing them. This approach allows us to posit a general criterion for constraining the conjugacy classes that may appear in the product expansions, see Lemma~\ref{lemma: meta}.
% For example, using we apply this strategy we demonstrate a general vanishing theorem that verifies which expansion coefficients may be non-zero.
A proof of Theorem \ref{theorem-main: product expansions} is found immediately below Corollary \ref{corollary: polynomial expansion coeff}.

%The main tool that sheds light on the coefficients $R_{A,B}^C$ of the product expansion is a recursive algorithm we give in \S\ref{sec: recursive formula} for computing them, along with a general vanishing theorem that verifies which expansion coefficients may be non-zero. A proof of Thoerem \ref{theorem-main: product expansions} will also appear in that section.
\begin{theorem}[\textbf{Recursive description}]\label{theorem-main:recursive description}
For every pair of integer partitions $\lambda$ and $\mu$, the polynomials $R_{\lambda,\mu}^\nu(t)$ which govern the product expansions of $q$-character polynomials satisfy the following recursion
\[
R_{\lambda,\mu}^{\nu}(t) = P_{\lambda,\nu}(t)\cdot P_{\mu,\nu}(t) - \sum_{\|\nu'\| < \|\nu\|} R_{\lambda,\mu}^{\nu'}(t) P_{\nu',\nu}(t).
\]
where the sum goes over all partitions $\nu'$ of numbers $<\|\nu\|$.
\end{theorem}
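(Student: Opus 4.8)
The plan is to extract the recursion from the defining product expansion of Theorem \ref{theorem-main: product expansions} by pairing both sides against a suitable evaluation functional. Start from the identity $X_{J_\lambda}\cdot X_{J_\mu} = \sum_{\nu} R_{\lambda,\mu}^{\nu}(q^d)\, X_{J_\nu}$, valid as an equality of class functions on $\bigcup_n \Gl_n(\F_{q^d})$. I would evaluate both sides at the unipotent matrix $J_\nu$ (for an arbitrary but fixed target partition $\nu$), over the field $\F_{q^d}$. By Theorem \ref{theorem-main:polynomiality} the value $X_{J_{\nu'}}(J_\nu)$ is precisely the universal polynomial $P_{\nu',\nu}(q^d)$, so the right-hand side becomes $\sum_{\nu'} R_{\lambda,\mu}^{\nu'}(q^d)\, P_{\nu',\nu}(q^d)$, while the left-hand side is $X_{J_\lambda}(J_\nu)\cdot X_{J_\mu}(J_\nu) = P_{\lambda,\nu}(q^d)\cdot P_{\mu,\nu}(q^d)$.

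The next step is to control the range of the summation index $\nu'$. The key observation is a support/triangularity statement: $X_{J_{\nu'}}(J_\nu) = 0$ unless $\|\nu'\| \le \|\nu\|$, because a subspace $W$ with $J_\nu|_W \sim J_{\nu'}$ must have dimension $\|\nu'\|$, and $J_\nu$ acting on $\F_{q^d}^{\|\nu\|}$ cannot fix a subspace of larger dimension; moreover when $\|\nu'\| = \|\nu\|$ the only fixed subspace of full dimension is the whole space, which forces $\nu' = \nu$ and $X_{J_\nu}(J_\nu) = 1$, i.e. $P_{\nu,\nu}(t) = 1$. Hence the sum $\sum_{\nu'} R_{\lambda,\mu}^{\nu'}(q^d) P_{\nu',\nu}(q^d)$ splits as the $\nu' = \nu$ term, contributing $R_{\lambda,\mu}^{\nu}(q^d)\cdot 1$, plus $\sum_{\|\nu'\| < \|\nu\|} R_{\lambda,\mu}^{\nu'}(q^d) P_{\nu',\nu}(q^d)$. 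Rearranging gives the claimed identity evaluated at $t = q^d$.

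Finally, I would upgrade the numerical identity at $t = q^d$ to an identity of polynomials in $t$. Since both sides are fixed polynomials in $\mathbb{Z}[t]$ (the $R$'s by Theorem \ref{theorem-main: product expansions}, the $P$'s by Theorem \ref{theorem-main:polynomiality}) that agree on the infinite set $\{q^d : q \text{ a prime power}\}$, they agree as polynomials. I expect the only genuinely delicate point to be the triangularity claim — specifically checking that the dimension constraint really does cut the sum down to $\|\nu'\| \le \|\nu\|$ and that equality pins down $\nu' = \nu$ with coefficient $1$; everything else is a matter of assembling results already established earlier in the paper. One should also note that the recursion determines $R_{\lambda,\mu}^\nu$ uniquely given all $R_{\lambda,\mu}^{\nu'}$ with $\|\nu'\| < \|\nu\|$ and the base case $R_{\lambda,\mu}^{\varnothing} = P_{\lambda,\varnothing}\cdot P_{\mu,\varnothing}$ (with $J_\varnothing$ the empty matrix), so the recursion does compute all coefficients.
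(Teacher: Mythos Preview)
Your proposal is correct and follows essentially the same route as the paper: the paper's Proposition~\ref{prop:A smaller than B} is exactly the triangularity statement you isolate (that $X_{J_{\nu'}}(J_\nu)$ vanishes for $\|\nu'\|>\|\nu\|$ and equals $1$ when $\nu'=\nu$), and its Theorem~\ref{theorem: recursion formula} is obtained by evaluating the product expansion at $C=J_\nu$ just as you do. The paper then upgrades the numerical identity at all prime powers to an identity in $\mathbb{Z}[t]$ in the same way, so your argument matches the paper's both in strategy and in the key lemma.
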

We prove this in \S\ref{sec: recursive formula}.

Our product expansions generalize classical problems in $q$-combinatorics. One explicit example with a natural interpretation as a product of $q$-character polynomials is the following.
\begin{example}[\textbf{Identity matrices}]
For every integer $n\geq 1$ let $I_n$ denote the $n\times n$-identity matrix. Then for every $n$ and $k\geq 1$, the evaluation $X_{I_k}(I_n)$ is simply counting the number of $n$-dimensional subspaces in $\F_q^m$, commonly denoted by the Gaussian binomial coefficient $\binom{n}{k}_q$. Hence expanding the product of two such statistics generalizes the product expansion of these $q$-binomial coefficients as linear combinations of other $q$-binomial coefficients:
\begin{align*}
X_{I_m}\cdot X_{ I_n}&=\sum_{k=0}^{\min{(m,n)}} \binom{m+n-k}{k}_q\binom{m+n-2k}{m-k}_q \cdot q^{(m-k)(n-k)}X_{I_{m+n-k}}.
\end{align*}
Evaluating this equation on a large identity matrix $I_N$ gives a uniform expansion for $\binom{N}{n}_q \binom{N}{m}_q$.

This equality is easily verifiable by a simple counting argument: the left-hand side is counting the number of ways to pick two subspaces $V$ and $W$ in $\F_q^{N}$ of dimensions $m$ and $n$ respectively. The right-hand side counts the same thing, but by first fixing the spaces $V+W$ and $V\cap W$, then enumerating the pairs $V$ and $W$ that could result in those.
\end{example}

In \S\ref{sec: expansion coefficients} we apply our recursive description for the product expansion coefficients to determine general formulas for the product expansions of $q$-character polynomials associated to unipotent matrices with various Jordan block sizes. An notable example is the product expansions of statistics counting single a Jordan block. 
\begin{example}[Example~\ref{example:Ja*Jb} below] \label{ex:intro-Ja*Jb}
For $b>a$, let $J_a$ and $J_b$ be unipotent Jordan blocks of sizes $a$ and $b$ respectively. Then, the product expansion are,
$$X_{J_a} \cdot X_{J_b} = X_{J_b}+ \left(\sum_{m=1}^{a-1} q^{2m-1}(q-1) X_{J_{b,m}}\right) + q^{2a} X_{J_{b,a}}$$
\end{example}

Product expansions of more complicated matrix statistics are listed throughout \S\ref{sec: expansion coefficients}, and many small cases are tabulated in Table~\ref{table:product expansions}.

%We further conjecture the form  for product expansions of matrix statistics on more Jordan blocks. These product expansions are calculated by invoking our evaluation formula from \S\ref{sec: evaluations} and the algorithm established in \S\ref{sec: recursive formula}. 

There are several consequences and stability patterns manifested in the product expansions that we compute -- many already visible in Example \ref{ex:intro-Ja*Jb} above. In Subsection~\ref{subsec: conjectures} we formulate a number of explicit conjectures regarding the form and properties of the expansion polynomials $R_{\lambda, \mu}^{\nu}(q)$. One notable conjecture addresses the question of generators for the ring of $q$-character polynomials.
\begin{conjecture*}[\textbf{Single blocks generate the ring}]
Products of statistics $X_{J_a}$ associated to single Jordan blocks generate the entire ring of $q$-character polynomials over the field of rational functions $\mathbb{Q}(q)$.

More precisely, for every partition $\lambda = (\lambda_1\geq \ldots \geq \lambda_k)$ let $J_{\lambda}$ be the unipotent Jordan matrix with block sizes specified by $\lambda$. Then there exists a polynomial $P(t_1,\ldots,t_{\lambda_1})$ of degree $k$ with coefficients in $\mathbb{Q}(q)$ such that
\[
X_{J_\lambda} = P(X_{J_1},X_{J_2},\ldots,X_{J_{\lambda_1}}).
\]
\end{conjecture*}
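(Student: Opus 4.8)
The plan is to prove the displayed identity by strong induction on $\|\lambda\| = \lambda_1 + \cdots + \lambda_k$, working inside the subring $\mathcal{U}$ of $q$-character polynomials spanned over $\mathbb{Q}(q)$ by the unipotent statistics $\{X_{J_\nu}\}$, which is closed under multiplication by Theorem~\ref{theorem-main: product expansions}. The base cases $\|\lambda\|\le 1$ are immediate, since $X_{J_\emptyset}=1$ and $X_{J_{(1)}}=X_{J_1}$ already have the required form. For the inductive step I would take $\lambda=(\lambda_1\ge\cdots\ge\lambda_k)$ with $k\ge 2$ and study the monomial $Y:=X_{J_{\lambda_1}}X_{J_{\lambda_2}}\cdots X_{J_{\lambda_k}}$. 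Iterating Theorem~\ref{theorem-main: product expansions} (the coefficients stay integer polynomials under iteration) yields an expansion $Y=\sum_\nu c_\nu(q)\,X_{J_\nu}$, and the whole argument turns on controlling which $\nu$ occur.

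The key step is the module-theoretic reading of these coefficients: $c_\nu(q)$ counts ordered $k$-tuples $(U_1,\dots,U_k)$ of $\F_q[t]$-submodules of the module $M_\nu$ of a unipotent matrix of type $\nu$ with $U_i\cong M_{J_{\lambda_i}}$ and $U_1+\cdots+U_k=M_\nu$ — exactly the kind of constraint I would extract from Lemma~\ref{lemma: meta}. Three facts follow at once. (i) $\|\nu\|=\dim M_\nu\le\sum_i\dim U_i=\|\lambda\|$, with equality forcing the sum to be direct and hence $M_\nu\cong\bigoplus_i M_{J_{\lambda_i}}=M_{J_\lambda}$, i.e.\ $\nu=\lambda$. (ii) $c_\lambda(q)$ is a \emph{nonzero} polynomial, since the standard cyclic decomposition of $M_{J_\lambda}$ exhibits one such tuple. (iii) Every $\nu$ that occurs has at most $k$ parts — $M_\nu$ being generated by the $k$ cyclic modules $U_i$ — and largest part at most $\lambda_1$, since $(t-1)^{\lambda_1}$ annihilates each $U_i$ and hence $M_\nu$.

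With these in hand one solves for the top term:
\[
X_{J_\lambda}=c_\lambda(q)^{-1}\Bigl(X_{J_{\lambda_1}}\cdots X_{J_{\lambda_k}}-\sum_{\|\nu\|<\|\lambda\|}c_\nu(q)\,X_{J_\nu}\Bigr),
\]
which is legitimate over $\mathbb{Q}(q)$ because $c_\lambda(q)\ne 0$. By (iii) each $X_{J_\nu}$ appearing has at most $k$ parts, largest part at most $\lambda_1$, and $\|\nu\|<\|\lambda\|$, so by the inductive hypothesis it is a polynomial of degree $(\#\text{parts of }\nu)\le k$ in $X_{J_1},\dots,X_{J_{\lambda_1}}$; since $Y$ is itself a degree-$k$ monomial in those generators, this exhibits $X_{J_\lambda}=P(X_{J_1},\dots,X_{J_{\lambda_1}})$ with $\mathbb{Q}(q)$-coefficients and $\deg P\le k$. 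To upgrade this to $\deg P=k$ — and to see that the degree is even well defined, i.e.\ that the $X_{J_a}$ are algebraically independent — I would pass to the associated graded ring of $\mathcal{U}$ for the filtration by $\|\cdot\|$: by (i) this graded ring is the polynomial ring $\mathbb{Q}(q)[y_1,y_2,\dots]$ with $y_a=[X_{J_a}]$ and $[X_{J_\nu}]$ a nonzero scalar multiple of $\prod_a y_a^{m_a(\nu)}$, so the leading term of $P$ is forced to be the degree-$k$ monomial $c_\lambda(q)^{-1}\prod_a X_{J_a}^{m_a(\lambda)}$.

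The part I expect to be genuinely hard — and presumably why the paper proves the statement only for matrices of at most two blocks — is items (ii)–(iii): a robust combinatorial proof that the product-expansion coefficients are supported exactly on those $\nu$ whose module is a sum of cyclic submodules of the prescribed sizes, with nonvanishing coefficient at the direct-sum term $\nu=\lambda$. For $k\le 2$ this is explicit from Example~\ref{ex:intro-Ja*Jb} and the analogous expansion of $X_{J_a}^2$; in general it needs Lemma~\ref{lemma: meta} in the sharp form asserting that the submodule-sum condition is the \emph{only} obstruction, plus an independent argument that $c_\lambda(q)$ does not vanish. One further point: the opening sentence of the conjecture should be read relative to the unipotent subring $\mathcal{U}$, since a non-unipotent statistic such as $X_A$ for $A$ a companion matrix of an irreducible quadratic vanishes on every matrix with no eigenvalue-$1$ part and so cannot literally be a polynomial in the $X_{J_a}$; handling the full ring of $q$-character polynomials would require combining the above with the reduction of general statistics to unipotent ones (Theorem~\ref{theorem: reducing to unipotents}).
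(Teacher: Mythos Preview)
The paper does not prove this statement: it is explicitly labeled a conjecture, with only the case of partitions having at most two parts established, and that via the explicit product expansion of Example~\ref{example:Ja*Jb}. Your proposal is therefore not a rederivation of the paper's argument but an actual proof of what the paper leaves open, by a route the paper does not attempt.

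Moreover, the step you flag as ``genuinely hard'' is not. The module-theoretic reading of the iterated coefficients --- that $c_\nu(q)$ equals the number of ordered tuples $(U_1,\dots,U_k)$ of $J_\nu$-invariant subspaces of the prescribed cyclic types with $\sum_i U_i$ equal to the whole space --- does not require any sharpening of Lemma~\ref{lemma: meta}. It follows directly: writing $s^C_{A,B}$ for the number of pairs $(W_1,W_2)$ of $C$-invariant subspaces of the correct types with $W_1+W_2$ equal to the ambient space, grouping all pairs counted by $X_A(C)X_B(C)$ according to the conjugacy class of $C|_{W_1+W_2}$ gives the identity $X_A(C)X_B(C)=\sum_M s^M_{A,B}\,X_M(C)$ for every $C$, and the linear independence of the functions $X_M$ (immediate from Proposition~\ref{prop:A smaller than B}) forces $s^M_{A,B}=r^M_{A,B}$; the $k$-fold version is then a one-line induction. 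With that interpretation in hand your (i)--(iii) are exactly as easy as you wrote them: the dimension bound and the direct-sum characterization of the top term are tautologies, the standard block decomposition witnesses $c_\lambda(q)\ge 1$ at every prime power so the polynomial is nonzero, and a unipotent module spanned by $k$ cyclic submodules annihilated by $(t-1)^{\lambda_1}$ has at most $k$ Jordan blocks of size at most $\lambda_1$. The induction then closes, and your associated-graded argument correctly upgrades $\deg P\le k$ to $\deg P=k$ and shows the $X_{J_a}$ are algebraically independent.

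Your closing caveat is also correct in spirit: the informal first sentence of the conjecture overreaches, since for $\alpha\ne 1$ the statistic $X_{(\alpha)}$ takes distinct values on the $1\times 1$ matrices $(\alpha)$ and $(\beta)$ with $\beta\notin\{1,\alpha\}$, whereas every $X_{J_a}$ vanishes on both, so no polynomial in the $X_{J_a}$ can separate them. Only the displayed assertion about unipotent $X_{J_\lambda}$ is what your argument settles.
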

Example \ref{example:Ja*Jb} shows that the conjecture holds for partitions with $\leq 2$ parts.

Finally, through the stabilization of expectations of $q$-character polynomials (\cite[Corollary 4.6]{GADISH2017450}), our product expansions facilitate a relatively straightforward approach to determining the higher and joint moments of the random variables $X_A$ for various $A$.
For example, it is easy to see that the number of eigenvectors with distinct eigenvalues are uncorrelated among random matrices of dimension 2 or greater.
A notable implication of the expansion given in the last example is the following correlation between the counts of subspaces on which a random matrix acts as a unipotent Jordan block of a given size -- these are the values of $X_{J_a}$.
\begin{theorem} \label{theorem: correlation}
For $b\geq a$, the correlation between random variables $X_{J_a}$ and $X_{J_b}$ in the uniform probability space $\Gl_n(\F_q)$ is $\sqrt{\frac{q^a-1}{q^b-1}}$ once $n \geq 2b$.
\end{theorem}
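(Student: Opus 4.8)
The plan is to reduce $\mathrm{Corr}(X_{J_a},X_{J_b}) = \mathrm{Cov}(X_{J_a},X_{J_b})/\sqrt{\mathrm{Var}(X_{J_a})\,\mathrm{Var}(X_{J_b})}$ to the stable expectations of unipotent $q$-character polynomials, which I would then evaluate in closed form using the product expansions of \S\ref{sec: expansion coefficients}. The first ingredient is that for every partition $\lambda$ and every $n\ge\|\lambda\|$ one has the exact (not merely asymptotic) identity
\[
E_n\!\left[X_{J_\lambda}\right] \;=\; \frac{1}{\bigl|C_{\Gl_{\|\lambda\|}(\F_q)}(J_\lambda)\bigr|},
\]
the $q$-analog of $E[X_\lambda]=1/z_\lambda$ and a special case of the stabilization of expectations in \cite[Corollary 4.6]{GADISH2017450}. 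I would prove it by a Fubini double count: $\sum_{B\in\Gl_n(\F_q)}X_{J_\lambda}(B)$ counts pairs $(B,W)$ with $W$ a $\|\lambda\|$-dimensional subspace and $B|_W\sim J_\lambda$; fixing $W$, the matrices preserving $W$ form a maximal parabolic that surjects onto $\Gl(W)$ with fibers of constant order, and the Gaussian-binomial count of the subspaces $W$ cancels every extraneous factor against $|\Gl_n(\F_q)|$. Feeding in the standard centralizer orders of unipotent classes gives $|C(J_a)|=q^{a-1}(q-1)$, $|C(J_{(b,m)})|=q^{b+3m-2}(q-1)^2$ for $m<b$, and $|C(J_{(a,a)})|=q^{4a-3}(q-1)^2(q+1)$.

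The second ingredient is the product expansions themselves. For $b>a$, Example~\ref{ex:intro-Ja*Jb} expresses $X_{J_a}\cdot X_{J_b}$ as an explicit combination of $X_{J_b}$ and the $X_{J_{(b,m)}}$ with $1\le m\le a$; the diagonal case needed for the variances has the analogous form $X_{J_a}^2 = X_{J_a} + \sum_{m=1}^{a-1}q^{2m-1}(q-1)X_{J_{(a,m)}} + q^{2a-1}(q+1)X_{J_{(a,a)}}$, which one reads off \S\ref{sec: expansion coefficients} or derives from the recursion of Theorem~\ref{theorem-main:recursive description} together with the evaluations $P_{(a),\nu}$. Every partition $\nu$ that occurs in either expansion satisfies $\|\nu\|\le 2b$, so as soon as $n\ge 2b$ all the relevant $E_n[X_{J_\nu}]$ sit at the stable values of the previous paragraph, and the identity of class functions from Theorem~\ref{theorem-main: product expansions} becomes an identity of numbers after applying $E_n$. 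Summing the resulting (short) geometric series---during which the contributions of the largest partitions, proportional to $q^{2-a-b}/(q-1)^2$, cancel---yields the clean closed form $E_n[X_{J_a}\cdot X_{J_b}] = \bigl(q^{\max(a,b)-2}(q-1)^2\bigr)^{-1}$ for $n\ge 2b$, hence $\mathrm{Cov}(X_{J_a},X_{J_b}) = \dfrac{q^{\min(a,b)}-1}{q^{a+b-2}(q-1)^2}$ and in particular $\mathrm{Var}(X_{J_a}) = \dfrac{q^a-1}{q^{2a-2}(q-1)^2}$.

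Substituting into the correlation formula, the common factor $q^{a+b-2}(q-1)^2$ cancels between the covariance and $\sqrt{\mathrm{Var}(X_{J_a})\,\mathrm{Var}(X_{J_b})}$, leaving $(q^a-1)/\sqrt{(q^a-1)(q^b-1)} = \sqrt{(q^a-1)/(q^b-1)}$, which is the assertion.

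The routine part is the geometric-series bookkeeping; the two points that need care are (i) obtaining the diagonal expansion $X_{J_a}^2$---it is not the stated Example~\ref{ex:intro-Ja*Jb} but a genuinely separate computation, and its top coefficient $q^{2a-1}(q+1)$ differs from the naive specialization $q^{2a}$ of the off-diagonal formula---and (ii) checking that $n\ge 2b$ is exactly the right hypothesis, which works out because the largest conjugacy class entering any product we use is $J_{(b,b)}$ of size $2b$, while $E_n[X_{J_\nu}]$ is already stable for $n\ge\|\nu\|$. I expect (i) to be the only real obstacle, and it is dispatched by \S\ref{sec: expansion coefficients} or directly by Theorem~\ref{theorem-main:recursive description}.
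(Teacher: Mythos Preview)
Your proposal is correct and follows essentially the same route as the paper: compute $\mathbb{E}[X_{J_a}X_{J_b}]$ by applying the product expansion of Example~\ref{example:Ja*Jb} (including the $b=a$ case you correctly identify, with top coefficient $q^{2a-1}(q+1)$), then plug in the stable expectations $\mathbb{E}[X_{J_\nu}]=1/|C(J_\nu)|$ and simplify. The one cosmetic difference is that you justify the expectation formula by a direct Fubini double count over pairs $(B,W)$, whereas the paper invokes stabilization and evaluates at $n=\|\lambda\|$; both are standard and yield the same centralizer orders you list.
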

Similar explicit though more complicated formulas exist for smaller $n$. We will not discuss those here. For a discussion and proof of Theorem \ref{theorem: correlation}, see \S\ref{subsec: applications}.

We conclude this introduction by laying out some basic notation to be used in future sections. 
\begin{notation}
For a square matrix $S$, let $\dim(S)$ and $n_S$ both denote the number of rows of $S$.

Denote by $J_{n_1,n_2,\dots,n_i}(\lambda)$ the matrix in Jordan form with block sizes $n_1, n_2, \dots, n_i$ and eigenvalue $\lambda$. 
Alternatively, consider a partition $\mu$ of $n$ with frequency representation $(1^{m_1},2^{m_2},\dots,k^{m_k})$. Then let $J_{\mu}(\lambda)=J_{1^{m_1},2^{m_2},\dots,k^{m_k}}(\lambda)$ also denote the matrix in Jordan form with $m_i$ blocks of size $i$ and eigenvalue $\lambda$. The unipotent matrix $J_{\mu}(1)$ will be denoted simply by $J_{\mu}$ and the nilpotent matrix $J_{\mu}(0)$ by $N_{\mu}$.
Observe that the total number of rows of $J_{\mu}(\lambda)$ will always be $\|\mu\|=\sum im_i$.
\end{notation}

\subsection{Acknowledgements}
This project grew out of an MIT PRIMES project for high school students, in which the 2nd author mentored the remaining authors. We would like to thank the organizers of MIT PRIMES: Prof. Pavel Etingof, Slava Gerovitch, Tanya Khovanova and Alexander Vitanov for the opportunity to take part in this inspirational program, as well as their feedback and helpful comments.
We also thank Trevor Hyde for helpful correspondence and much needed perspective.

\section{$q$-character polynomial evaluations} \label{sec: reductions}
In order to determine the coefficients central to our problem, we will first show that for any two matrices in $\F_q$, the evaluation $X_A(B,q^d)$ is determined by evaluations with $A$ and $B$ unipotent matrices. Furthermore, we show that the latter evaluations on unipotents are polynomial in $t=p^d$ for any prime power. With these results, we compute the evaluations explicitly in Theorem~\ref{theorem: reducing to unipotents}.%, allowing us to only consider statistics on unipotent matrices with results that can be applied to more general product expansions.

\subsection{Disjoint sets of eigenvalues} 
First, we show that matrices with disjoint sets of eigenvalues interact trivially, in the following sense.

\begin{lemma}\label{lem:evaluations of disjoint eigenvalues}
Let $A$ and $B$ be square matrices over $\F_q$ and suppose that $B$ factors as direct sum of square matrices $B \sim B_1 \oplus B_2$ such that $B_2$ shares no eigenvalues with $A$, then $X_A(B) = X_A(B_1)$.
\end{lemma}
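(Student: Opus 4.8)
The plan is to show that every subspace $W \leq \F_q^n$ witnessed by $X_A(B)$ — meaning $B(W)\subseteq W$ and $B|_W \sim A$ — must in fact be contained in the $B_1$-summand, so that counting such subspaces inside $\F_q^n$ agrees with counting them inside the $B_1$-block. Write $\F_q^n = V_1 \oplus V_2$ for the $B$-invariant decomposition realizing $B \sim B_1 \oplus B_2$, where $B|_{V_1} \sim B_1$ and $B|_{V_2} \sim B_2$. The key algebraic input is that $V_1$ and $V_2$ are exactly the \emph{generalized eigenspace components}: $V_1$ is the sum of the generalized eigenspaces of $B$ for eigenvalues of $B_1$ (equivalently, those shared with $A$, plus possibly others — but we can always refine so that $V_1$ carries precisely the eigenvalues occurring in $A$, absorbing the rest into $V_2$; this keeps $B_2$ sharing no eigenvalue with $A$). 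Then $V_1 = \ker \chi_{B_1}(B)$ and similarly for $V_2$, where $\chi_{B_1}, \chi_{B_2}$ are the characteristic polynomials, which are coprime by the disjoint-eigenvalue hypothesis (over $\F_q$, or over the splitting field, which suffices since invariance is a field-extension-stable condition).

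First I would recall the standard fact that for a $B$-invariant $W$, the eigenvalues of $B|_W$ (over $\overline{\F_q}$) form a subset of the eigenvalues of $B$, counted with multiplicity no larger. Since $B|_W \sim A$, the eigenvalues of $B|_W$ are precisely those of $A$, all of which lie among the eigenvalues of $B_1$ (none among those of $B_2$). Next, for $w \in W$, write $w = w_1 + w_2$ with $w_i \in V_i$; since $W$ is $B$-invariant and $V_1,V_2$ are $B$-invariant, applying any polynomial in $B$ keeps this decomposition. Because $\chi_{B_2}(B)$ annihilates $V_2$ and is invertible on $V_1$ (coprimality), the projection $\F_q^n \to V_1$ is given by a polynomial in $B$, hence preserves $W$; likewise the projection to $V_2$ preserves $W$. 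Therefore $W = (W\cap V_1) \oplus (W \cap V_2)$. But $B|_{W\cap V_2}$ is a restriction of $B_2$, so its eigenvalues are among those of $B_2$, disjoint from those of $A$; since these must also be among the eigenvalues of $B|_W \sim A$, we conclude $W \cap V_2 = 0$, i.e. $W \subseteq V_1$.

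Finally, I would conclude the counting: the set of $W \leq \F_q^n$ with $B(W)\subseteq W$ and $B|_W \sim A$ is in bijection with the set of $W \leq V_1$ with $(B|_{V_1})(W) \subseteq W$ and $(B|_{V_1})|_W \sim A$, because every such $W$ lies in $V_1$ by the previous paragraph and the conditions only involve $B|_{V_1}$. Since $B|_{V_1} \sim B_1$, this count is exactly $X_A(B_1)$, giving $X_A(B) = X_A(B_1)$. The same argument applies verbatim over each extension $\F_{q^m}$, so the equality holds at every $q^m$ as the definition requires.

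The main obstacle is the subtlety in the first sentence of the second paragraph: pinning down that $V_1$ can be chosen to carry \emph{exactly} the eigenvalues appearing in $A$. If $B_1$ happens to share extra eigenvalues with neither $A$ nor $B_2$ in a way that's already absorbed, one must be careful that the decomposition $B \sim B_1 \oplus B_2$ given in the hypothesis may not be the eigenvalue-refined one; the fix is to first replace $B_1$ by $B_1' := B_1 \oplus (\text{the part of } B_2 \text{ with eigenvalues of } A)$ — wait, that goes the wrong way. The clean approach is instead: let $B_1'$ be the direct summand of $B$ collecting all generalized eigenspaces for eigenvalues \emph{of $A$}, and $B_2'$ the complement; then $B \sim B_1' \oplus B_2'$ with $B_2'$ sharing no eigenvalue with $A$, and moreover $B_1'$ is (up to similarity) a direct summand of the original $B_1$, so $X_A(B_1) = X_A(B_1')$ by the same containment argument applied one level down. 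Handling this bookkeeping carefully — rather than the linear algebra, which is routine — is where the real attention is needed.
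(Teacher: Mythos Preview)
Your argument is correct and follows the same route as the paper: show that any $B$-invariant $W$ with $B|_W\sim A$ must lie in $V_1$, then note that the two counts agree. The paper's proof is terser---it simply asserts that since $B_2$ shares no eigenvalues with $A$ one gets $W\cap V_2=0$ and $W\subseteq V_1$---whereas you supply the mechanism (projections onto $V_i$ are polynomials in $B$) that actually justifies the containment.

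Your final paragraph worries more than necessary. The coprimality of $\chi_{B_1}$ and $\chi_{B_2}$ is not needed: apply $\chi_{B_2}(B)$ directly. It annihilates $V_2$ by Cayley--Hamilton, and it is invertible on $W$ because $B|_W\sim A$ has no eigenvalue in common with $B_2$; hence $W=\chi_{B_2}(B)(W)\subseteq V_1$. This bypasses any need to refine the decomposition or to assume $B_1$ and $B_2$ have disjoint spectra, so the bookkeeping you flag as the ``main obstacle'' disappears.
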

\begin{proof}
Let $\F^{\dim(B)} = V_1\oplus V_2$ be a decomposition such that $B$ acts on $V_i$ by $B_i$. Suppose $W\leq \F^{\dim(B)}$ is a space counted by $X_A(B)$, i.e. $B(W)\subseteq W$ and $B|_W \sim A$. This implies that $A$ and $B$ act on $W$ with the same eigenvalues. But since $B_2$ shares no such eigenvalues with $A$, it follows that $W\cap V_2 = 0$ and $W\subseteq V_1$. Therefore $W$ is counted in $X_A(B_1)$ as well.

Conversely, if $W\leq V_1$ is counted in $X_A(B_1)$, then since $W\leq V_1\oplus V_2 = \F^{\dim(B)}$ and satisfies $B|_W=B_1|_W$, it is clearly counted in $X_A(B)$ as well. Thus the two counts coincide.
\end{proof}

Similarly, consider the product of statistics.
\begin{lemma}\label{lem:product with disjoint eigenvalues}
If $A$ and $B$ are square matrices that share no eigenvalues, the product of statistics $X_A \cdot X_B$ is equal to $X_{\left(\substack{A 0\\0 B}\right)}$.
\end{lemma}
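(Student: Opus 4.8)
The plan is to verify the identity pointwise. Both $X_A\cdot X_B$ and $X_{\left(\begin{smallmatrix}A&0\\0&B\end{smallmatrix}\right)}$ are class functions on $\bigcup_n\Gl_n$ (indeed on $\bigcup_n\operatorname{End}(\F^n)$ over any extension $\F$ of $\F_q$), so I would fix an endomorphism $C$ of $\F^n$ and show the two values at $C$ agree. Write $\chi_A,\chi_B\in\F_q[x]$ for the characteristic polynomials; the hypothesis that $A$ and $B$ share no eigenvalue is exactly the statement that $\chi_A$ and $\chi_B$ are coprime in $\F_q[x]$ (a common root over $\overline{\F_q}$ would force a common irreducible factor). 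The value $X_A(C)\,X_B(C)$ counts the set $\mathcal P$ of ordered pairs $(W_1,W_2)$ of $C$-invariant subspaces with $C|_{W_1}\sim A$ and $C|_{W_2}\sim B$, while $X_{A\oplus B}(C)$ counts the set $\mathcal W$ of $C$-invariant $W$ with $C|_W\sim A\oplus B$, where $A\oplus B=\left(\begin{smallmatrix}A&0\\0&B\end{smallmatrix}\right)$. So it suffices to construct a bijection between $\mathcal P$ and $\mathcal W$.

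The forward map I would use is $(W_1,W_2)\mapsto W_1+W_2$. First I would check the sum is direct: by Cayley--Hamilton $\chi_A(C)$ annihilates $W_1$ and $\chi_B(C)$ annihilates $W_2$, so $W_1\cap W_2$ is annihilated by both, and a B\'ezout relation $u\chi_A+v\chi_B=1$ shows that $u(C)\chi_A(C)+v(C)\chi_B(C)=\mathrm{id}$ annihilates $W_1\cap W_2$, i.e.\ $W_1\cap W_2=0$. Then $W_1\oplus W_2$ is $C$-invariant and, in a basis adapted to the decomposition, $C$ acts block-diagonally with blocks similar to $A$ and to $B$, so $C|_{W_1\oplus W_2}\sim A\oplus B$ and the output lies in $\mathcal W$.

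For the inverse, given $W\in\mathcal W$ I would take the primary decomposition of $C|_W$ relative to the coprime factorisation $\chi_A\chi_B$ of its characteristic polynomial: $W=\ker\bigl(\chi_A(C)|_W\bigr)\oplus\ker\bigl(\chi_B(C)|_W\bigr)$, with $C$-invariant summands, which I call $W_1$ and $W_2$. Because the primary decomposition is canonical and, applied to the model $A\oplus B$ itself, returns the two evident blocks (here $\chi_A(B)$ and $\chi_B(A)$ are invertible), I get $C|_{W_1}\sim A$ and $C|_{W_2}\sim B$, so $(W_1,W_2)\in\mathcal P$. Finally I would verify the two maps are mutually inverse: one composite is immediate, since summing the primary pieces of $W$ returns $W$; for the other, starting from $(W_1,W_2)\in\mathcal P$ and forming $W=W_1\oplus W_2$, Cayley--Hamilton again gives $W_1\subseteq\ker(\chi_A(C)|_W)$ and $W_2\subseteq\ker(\chi_B(C)|_W)$, and the dimension identity $\dim W_1+\dim W_2=\dim W$ upgrades both inclusions to equalities, so the primary decomposition of $W$ reproduces $(W_1,W_2)$. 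The one genuinely delicate point is this last matching — that the ``geometric'' splitting $W_1\oplus W_2$ coming from an arbitrary pair coincides with the ``algebraic'' primary decomposition of $W$ — and, just as in Lemma~\ref{lem:evaluations of disjoint eigenvalues}, it is forced precisely by the coprimality of $\chi_A$ and $\chi_B$. Since $|\mathcal P|=|\mathcal W|$ for every $C$, this yields $X_A\cdot X_B=X_{A\oplus B}$.
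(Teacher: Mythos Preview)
Your proof is correct and takes a genuinely different route from the paper's. The paper invokes the external fact \cite[Corollary~3.9]{GADISH2017450} that $X_A\cdot X_B$ admits a finite expansion $\sum_C r_{A,B}^C X_C$ with $\dim C\le\dim A+\dim B$, then argues that whenever both $X_A(C)$ and $X_B(C)$ are nonzero one must have $\dim C=\dim A+\dim B$ and $C\sim A\oplus B$, and finally checks that this sole surviving coefficient equals $1$. You instead prove the identity pointwise, exhibiting for each $C$ an explicit bijection between pairs $(W_1,W_2)$ and single subspaces $W$ via the primary decomposition and a B\'ezout argument. Your approach is more self-contained and elementary---it never appeals to the existence of a product expansion---and it gives a transparent bijective explanation of the equality. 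The paper's approach, by contrast, is already phrased in the language of expansion coefficients $r_{A,B}^C$ and thus dovetails with the recursive method developed later in \S\ref{sec: recursive formula}; it also previews the general strategy of constraining which $C$ can appear in a product by evaluating at small matrices.
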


\begin{proof}
By \cite[Corollary 3.9]{GADISH2017450} there exist coefficients such that 
\[X_A\cdot X_B = \sum_{C} r_{A,B}^C X_C\]
where $C$ ranges over conjugacy classes of matrices where \[\max(\dim(A),\dim(B)) \leq \dim(C) \leq \dim(A)+\dim(B).\]
For every such $C$, the evaluation $X_A(C)$ counts the number of $\dim(A)$-dimensional subspaces of $\F^{\dim(C)}$ that are $C$-invariant and on which $C$ acts by $A$ up to conjugation. Let $S_A$ be the collection of these subspaces, so that $| S_A | = X_A(C)$. Similarly let $S_B$ be the set of the analogous subspaces for the case where $A$ is replaced by $B$.

Now we claim that for every $V_1\in S_A$ and $V_2\in S_B$ we have $V_1\cap V_2 = \{0\}$. Indeed, the intersection $W=V_1\cap V_2$ is itself $C$-invariant, since both $V_1$ and $V_2$ are such. Assume for the sake of contradiction that $\dim(W)>0$. Then the restriction of $C$ to $W$ has some eigenvalue $\lambda$ (perhaps only a member of a larger field). But since the action of $C$ on $V_1$ is conjugate to that of $A$, this $\lambda$ must then also be an eigenvalue $A$, and similarly it must also be an eigenvalue of $B$. This is a contradiction, as $A$ and $B$ have no common eigenvalues.

If the evaluation of $C$ on the product $X_A\cdot X_B$ is non-zero, then both $S_A$ and $S_B$ are non-empty. Thus from the above argument we conclude that for any choice of $V_1\in S_A$ and $V_2\in S_B$,
\[
\dim(A) + \dim(B) = \dim (V_1) + \dim (V_2) = \dim (V_1 \oplus V_2) \leq \dim(C)
\]
showing that the only matrices $C$ for which the product $X_A\cdot X_B$ is non-zero are of the maximal dimension $\dim(A)+\dim(B)$. It also follows that $\F^{\dim(C)} = V_1 \oplus V_2$, that is the space has a basis built from a basis for $V_1$ followed by a basis for $V_2$. Since both subspaces are $C$ invariant, in every such basis the matrix $C$ is represented by
\[
C' = \left(\substack{ A' 0 \\ 0 B'}\right)
\]
where $A' = C|_{V_1} \sim A$ and $B' = C|_{V_2} \sim B$, so $C'$ is conjugate to the block matrix $\left(\substack{ A 0 \\ 0 B}\right)$. But since change of basis corresponds to conjugation, it follows that $C\sim C'$. We thus found that the only matrices of dimension $\leq \dim(A)+\dim(B)$ that may evaluate on $X_A\cdot X_B$ nontrivially must be conjugate to the block matrix built from $A$ and $B$.

Conversely, this block matrix clearly evaluates to $1$ on both $X_A$ and $X_B$, thus giving the desired equality
\[
X_A\cdot X_B = X_{\left(\substack{ A 0 \\ 0 B}\right)}.
\]
\end{proof}

\subsection{Independence of eigenvalues}
Now that it has been shown that is possible to consider matrices with only one eigenvalue, the next reduction verifies that the choice of eigenvalue for our matrices does not matter.

\begin{lemma}\label{lem: changing eigenvalue}
Let $A_0$ and $B_0$ be two nilpotent matrices. For every two scalars $\lambda, \lambda' \in \F$ denote $A_{\lambda}=\lambda I + A_0$ and $A_{\lambda'}=\lambda' I+A_0$, similarly denote $B_{\lambda}$ and $B_{\lambda'}$. Observe that $A_{\lambda}$ and $A_{\lambda'}$ have the same Jordan form and generalized eigenvectors but different eigenvalues. Then, we have $X_{A_{\lambda}}(B_{\lambda})=X_{A_{\lambda'}}(B_{\lambda'})$.
\end{lemma}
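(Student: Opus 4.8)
The plan is to exhibit an explicit bijection between the subspaces counted by $X_{A_\lambda}(B_\lambda)$ and those counted by $X_{A_{\lambda'}}(B_{\lambda'})$, and the natural candidate is the identity map on subspaces. First I would set $B_\lambda = \lambda I + B_0$ acting on $\F^{n}$ where $n = \dim(B_0)$. The key observation is that for any subspace $W \leq \F^n$, the condition $B_\lambda(W) \subseteq W$ is equivalent to $B_0(W) \subseteq W$, since $B_0 = B_\lambda - \lambda I$ and $\lambda I$ preserves every subspace. So the two statistics range over exactly the same collection of $B_0$-invariant subspaces of the right dimension; it remains to check that the similarity condition transfers, i.e. that $B_\lambda|_W \sim A_\lambda$ if and only if $B_0|_W \sim A_0$.

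For that step I would argue as follows: if $W$ is $B_0$-invariant then $B_\lambda|_W = \lambda I|_W + B_0|_W$, so $B_\lambda|_W - \lambda I|_W = B_0|_W$. Now $B_0|_W \sim A_0$ means there is a change of basis $g$ of $W$ with $g (B_0|_W) g^{-1} = A_0$ (after identifying $W$ with $\F^{\dim W}$ via a fixed basis of the right size, noting $\dim W = \dim A_0$ is forced). Conjugating the equation $B_\lambda|_W = \lambda I + B_0|_W$ by the same $g$ gives $g(B_\lambda|_W)g^{-1} = \lambda I + A_0 = A_\lambda$, so $B_\lambda|_W \sim A_\lambda$; the converse is identical with $A_0 = A_\lambda - \lambda I$. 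Since conjugation commutes with adding a scalar multiple of the identity, this is essentially a one-line computation once the setup is in place.

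Concretely I would phrase the whole argument as: the map $W \mapsto W$ is a bijection from $\{W \leq \F^n : \dim W = \dim A_0,\ B_\lambda(W)\subseteq W,\ B_\lambda|_W \sim A_\lambda\}$ to the analogous set with $\lambda$ replaced by $\lambda'$, because membership in either set is equivalent to the $\lambda$-independent conditions ``$\dim W = \dim A_0$, $B_0(W) \subseteq W$, and $B_0|_W \sim A_0$.'' Counting both sides gives $X_{A_\lambda}(B_\lambda) = X_{A_0}(B_0) = X_{A_{\lambda'}}(B_{\lambda'})$, which is in fact slightly stronger than the claimed equality. One should also remember to treat the field-extension evaluations $X_{A_\lambda}(B_\lambda, q^m)$ uniformly: the same argument applies verbatim over each $\F_{q^m}$ since $\lambda, \lambda' \in \F_q \subseteq \F_{q^m}$ and nothing used about $\F$ beyond it being a field containing the scalars.

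I do not anticipate a genuine obstacle here; the only subtlety worth stating carefully is the identification of the abstract restriction $B_0|_W$ with a matrix, so that ``$\sim A_0$'' makes sense — this requires noting that any $W$ with $B_0|_W \sim A_0$ (or $B_\lambda|_W \sim A_\lambda$) automatically has $\dim W = \dim A_0$, so the dimension constraint is consistent and we are comparing operators on spaces of equal dimension. Everything else is the formal remark that translation by a scalar matrix is central in $\operatorname{End}(W)$ and hence commutes with the conjugation action defining matrix similarity.
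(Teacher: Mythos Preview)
Your argument is correct and matches the paper's proof essentially verbatim: both show that the set of subspaces counted by $X_{A_\lambda}(B_\lambda)$ is literally the same set as that counted by $X_{A_{\lambda'}}(B_{\lambda'})$, since invariance and the similarity condition are unaffected by adding a scalar multiple of the identity. Your version routes the equivalence through the nilpotent $B_0$ rather than directly from $\lambda$ to $\lambda'$, and spells out the conjugation step more explicitly, but these are cosmetic differences.
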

\begin{proof}
It is easy to see that a subspace $W$ is $B_{\lambda}$-invariant iff if it is $B_{\lambda'}$-invariant as $B_{\lambda}(W)\subseteq W$ iff $B_{\lambda}(W) + (\lambda-\lambda')W\subseteq W$. It is similarly clear that the restriction $B_\lambda|_W$ is conjugate to $A_\lambda$ iff the restriction $B_{\lambda'}|_W$ is conjugate to $A_{\lambda'}$. 

Thus it follows that a space $W$ is $B_{\lambda}$-invariant and $B_{\lambda}|_W\sim A_{\lambda}$ if and only if $W$ is $B_{\lambda'}$-invariant and $B_{\lambda'}|_W\sim A_{\lambda'}$. Hence the sets of all such $W$ coincide, which by definition gives $X_{A_\lambda}(B_\lambda)=X_{A_\lambda'}(B_{\lambda'}).$
\end{proof}
Since the statistics are invariant under changing eigenvalues, one can, in particular, reduce general calculations to those involving only unipotent matrices, i.e. matrices in which all eigenvalues are 1.

\subsection{Field extensions and Jordan matrices}
Next, we determine how the evaluations of matrices are affected by field extensions, ultimately facilitating the reduction of any evaluation and expansion calculations to those involving statistics on matrices in Jordan form.

We begin by considering the following simple example. 
\begin{example}\label{ex:field extension example}
Recall that over the field $\F_4$ there is a similarity $\begin{pmatrix} 1 & 1 \\ 1 & 0 \end{pmatrix} \sim \begin{pmatrix} \epsilon & 0 \\ 0 & \epsilon^2 \end{pmatrix}$ where $\epsilon \in \F_4$ satisfies $\epsilon^2+\epsilon+1=0$. Our next theorem shows that for any matrix $B$ over $\F_2$ we have $X_{\left(\substack{ 11 \\ 10}\right)}(B,2) = X_{\left( \epsilon \right)}(B,4)$.
\end{example}

We let $F:=\Fr_q$ denote the Frobenius automorphism of $\F_q$.
\begin{lemma}\label{lemma: specific field extension} For $M_0$ a nilpotent matrix in Jordan form, denote $M_{\mu} = M_0 + \mu I$ for every scalar $\mu$. Let $A$ be a matrix with entries in $\F_q$ that is conjugate to
\[ \begin{pmatrix} M_{\lambda} & & &  \\ & M_{F(\lambda)} & &  \\ & & \ddots &  \\ & & & M_{F^{d-1}(\lambda)}  \end{pmatrix} \]
for $d>1$ such that $\F_q[\lambda] = \F_{q^d}$.

Let $B$ be an $n\times n$ matrix with coefficients in $\F_q$. Then,
$X_{A}(B,q) = X_{M_{\lambda}}(B,q^d).$ 
\end{lemma}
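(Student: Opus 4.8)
The plan is to prove the identity by exhibiting an explicit bijection between the subspaces counted by $X_A(B,q)$ and those counted by $X_{M_\lambda}(B,q^d)$, implemented by Galois descent along $\F_q\subseteq\F_{q^d}$ together with the coordinatewise Frobenius. To set up, write $\mu$ for the partition recording the block sizes of $M_0$, so $\dim M_\lambda=\|\mu\|$. Since $\F_q[\lambda]=\F_{q^d}$, the scalar $\lambda$ has degree exactly $d$ over $\F_q$, so its Frobenius orbit $\lambda,F(\lambda),\dots,F^{d-1}(\lambda)$ consists of $d$ distinct elements. Extending scalars to $\F_{q^d}$, the matrix $A$ becomes conjugate there to $\bigoplus_{i=0}^{d-1}M_{F^i(\lambda)}$; because the $d$ eigenvalues involved are distinct, this is exactly the generalized-eigenspace decomposition of $A$ over $\F_{q^d}$, with the generalized $F^i(\lambda)$-eigenspace carrying the action $M_{F^i(\lambda)}$ (in particular the $\lambda$-part is $M_\lambda$, with block sizes $\mu$).

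Next I would record two standard inputs. Let $F$ also denote the coordinatewise Frobenius on $\F_{q^d}^n$: it is $\F_q$-semilinear, has fixed space $(\F_{q^d}^n)^F=\F_q^n$, and commutes with the action of $B$ since $B$ has entries in $\F_q$. (i) \emph{Galois descent}: the assignments $U\mapsto U\cap\F_q^n$ and $V\mapsto V\otimes_{\F_q}\F_{q^d}$ are mutually inverse bijections between $F$-stable $\F_{q^d}$-subspaces of $\F_{q^d}^n$ and $\F_q$-subspaces of $\F_q^n$; they preserve $B$-invariance because $F$ and $B$ commute; and for a $B$-invariant $V\leq\F_q^n$ the restriction $B|_V$ is conjugate to $A$ over $\F_q$ if and only if $B|_{V\otimes\F_{q^d}}$ is conjugate to $A$ over $\F_{q^d}$, since matrix conjugacy is detected after any field extension (equality of rational canonical forms). (ii) Because $F$ is a $B$-equivariant additive bijection with $F(cv)=F(c)F(v)$, it carries the generalized $c$-eigenspace of $B$ onto the generalized $F(c)$-eigenspace and preserves the resulting partition of Jordan block sizes; hence for a $B$-invariant $U$ with $B|_U\sim M_c$ one gets $B|_{F(U)}\sim M_{F(c)}$.

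With these in hand, the bijection goes as follows. Given $W\leq\F_q^n$ counted by $X_A(B,q)$, let $\Phi(W)$ be the generalized $\lambda$-eigenspace of $B$ acting on $W\otimes\F_{q^d}$; since $B|_{W\otimes\F_{q^d}}\sim A$ and $A$'s $\lambda$-part is $M_\lambda$, the space $\Phi(W)$ is $B$-invariant of dimension $\|\mu\|$ with $B|_{\Phi(W)}\sim M_\lambda$, so it is counted by $X_{M_\lambda}(B,q^d)$. Conversely, given $W'\leq\F_{q^d}^n$ counted by $X_{M_\lambda}(B,q^d)$, set $\widetilde W:=\sum_{i=0}^{d-1}F^i(W')$ and $\Psi(W'):=\widetilde W\cap\F_q^n$. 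By (ii), $F^i(W')$ lies in the generalized $F^i(\lambda)$-eigenspace of $B$, and as these eigenvalues are distinct the sum is direct; $\widetilde W$ is $B$-invariant and $F$-stable (as $F^d=\mathrm{id}$ cyclically permutes the summands), so by (i) $\widetilde W=\Psi(W')\otimes\F_{q^d}$ with $\Psi(W')$ a $B$-invariant subspace of $\F_q^n$; over $\F_{q^d}$ we have $B|_{\widetilde W}\sim\bigoplus_i M_{F^i(\lambda)}\sim A$, so $B|_{\Psi(W')}\sim A$ over $\F_q$ by (i), and $\Psi(W')$ is counted by $X_A(B,q)$. To see $\Phi$ and $\Psi$ are mutually inverse: for $W$ counted by $X_A(B,q)$ the subspace $W\otimes\F_{q^d}$ is $F$-stable, so $F$ permutes its generalized eigenspaces cyclically, giving $\sum_i F^i(\Phi(W))=W\otimes\F_{q^d}$ and hence $\Psi(\Phi(W))=W$; for $W'$ counted by $X_{M_\lambda}(B,q^d)$ the generalized $\lambda$-eigenspace of $B|_{\widetilde W}=B|_{\bigoplus_i F^i(W')}$ is exactly $W'$, so $\Phi(\Psi(W'))=W'$. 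Counting the two sets then yields $X_A(B,q)=X_{M_\lambda}(B,q^d)$.

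I expect the main obstacle to be the careful justification of input (ii): verifying that the \emph{semilinear} Frobenius genuinely transports the entire Jordan structure — the generalized-eigenspace together with its block-size partition — from eigenvalue $c$ to eigenvalue $F(c)$, so that "$B|_U\sim M_c$" passes cleanly to "$B|_{F(U)}\sim M_{F(c)}$". Everything else is formal Galois descent plus the bookkeeping that $\sum_i F^i(W')$ is direct and reassembles into a descended $\F_q$-subspace; the distinctness of the Frobenius orbit of $\lambda$ (from $\F_q[\lambda]=\F_{q^d}$) is what makes that bookkeeping work.
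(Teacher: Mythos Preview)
Your proposal is correct and follows essentially the same approach as the paper: both construct the bijection by sending $W$ to the generalized $\lambda$-eigenspace of $B$ on $W\otimes\F_{q^d}$, and inversely by summing the Frobenius translates $\bigoplus_i F^i(W')$ and applying Galois descent. Your input (ii) is handled in the paper by the one-line observation that a Jordan chain $Bv_i=\lambda v_i+v_{i-1}$ is carried by $F$ to $B(F(v_i))=F(\lambda)F(v_i)+F(v_{i-1})$, which suffices to transport the block structure.
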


\begin{proof}
Let $S = \{ W \leq \F_q^n \mid W \text{ is } B\text{-invariant and } B|_W \sim A \}$ and $T =  \{ V \leq \F_{q^d}^n \mid V \text{ is } B\text{-invariant}\\ \text{and } B|_V \sim M_{\lambda} \}$. We wish to construct a bijection between $S$ and $T$ since $|S| = X_{A}(B,q)$ and $|T| = X_{M_{\lambda}}(B,q^d)$.

Consider $W \in S$. Extend scalars to $\F_{q^d}$ to form $\overline{W}$. It is clear that $\overline{W}$ is still $B$-invariant and $B$ acts by $A$ up to conjugation on $\overline{W}$. Now, consider the function $g:S\to T$ given by sending $W$ to the $\lambda$-generalized eigenspace of $B|_{\overline{W}}$. This map produces a subspace that must be $B$-invariant, as it is a subspace of $\overline{W}$, and on which $B$ acts by $M_{\lambda}$ up to conjugation.

Conversely, consider $V \in T$. As $V$ is $B$-invariant, $Bv \in V$ for all $v \in V$, so $F^i(Bv) = B F^i(v) \in F^i(V)$ for all $i$. Thus $F^i(V)$ is $B$-invariant as well.
Furthermore, for any sequence of vectors $v_1,\ldots,v_k$ such that $Bv_i = \lambda v_i + v_{i-1}$ (with $v_0=0$), one has that $B(F(v_i)) = F(B(v_i)) = F(\lambda)F(v_i) + F(v_{i-1})$. Thus, on $F(V)$ the operator $B$ acts through a matrix conjugate to $M_{F(\lambda)}$.

Let $\overline{W} = \bigoplus_{i=0}^{d-1} F^i(V)$. Since every $F^i(V)$ is $B$-invariant, the sum is also. Furthermore, since $F: F^i(V)\to F^{i+1}(V)$, the sum is also $F$-invariant. Lastly, $B$ acts on $\overline{W}$ by the block matrix containing the Jordan matrices $M_{F^i(\lambda)}$ for $0 \leq i \leq d-1$, thus the $B$ action on $\overline{W}$ is conjugate to $A$ by assumption. By Galois descent (see e.g. \cite{conrad}), there exists a unique $W \leq \F_q^n$ whose extension to $\F_{q^d}^n$ is precisely $\overline{W}$. Explicitly, $W$ consists of the $F$-fixed vectors in $\overline{W}$.

As $\overline{W}$ is $B$-invariant, for all $w \in W \subset \overline{W}$ we have $Bw \in \overline{W}$. However, we also have $F(Bw)  = F(B)F(w) = Bw$ because $W$ consists of the $F$-fixed vectors. Thus it follows that $Bw$ is $F$-fixed, and so we must have $Bw \in W$, i.e. $W$ is $B$-invariant. Also, $B$ acts by $A$ up to conjugation on $W$ because $A$ and $B|_W$ have the same rational canonical form. Thus, $W \in S$, so we have constructed a map from $h:T \to S$ that sends $V$ to $W$. We now show that $g$ and $h$ are inverses, thus establishing $|S|=|T|$ and completing the proof.

In the construction of $g(h(V)) = g(W)$, one first considers $\overline{W}$ after extending scalars. We then consider the map from $\overline{W}$ to the $\lambda$-generalized eigenspace of $B|_{\overline{W}}$, which is $V$ because $V$ is a summand of $\overline{W}$ and on all other summands, $B$ has different eigenvalues. We conclude that $g(h(V)) = V$.

Now, consider $h(g(W'))$ for some $W' \in S$, and let $\overline{W'}$ be the space formed by extending the scalars of $W'$ to $\F_{q^d}$. We claim that the $F^{i}(\lambda)$-generalized eigenspace of $B|_{\overline{W'}}$ is precisely $F^i(g(W'))$. Note that for all $v' \in g(W')$ we have $(B-\lambda I)^m v'=0$ for some fixed $m$. Then, $F((B-\lambda I)^m v') = (B - F^i(\lambda) I)^m F^i(v') = 0$, so $F^i(g(W'))$ must be equal to the $F^i(\lambda)$-generalized eigenspace. Thus, we observe that the map $h$ on $g(W')$ precisely takes a direct sum of the eigenspaces of $\overline{W'}$ to produce $\overline{W'}$. Finally, $h$ reduces $\overline{W'}$ back to $W'$ by uniqueness of Galois descent. Therefore, $h(g(W'))=W'$. 

This implies $g$ and $h$ are bijections, so $|S|=|T|$.
\end{proof}

Thus, this lemma demonstrates that determining the evaluations of statistics on matrices in Jordan form will prove sufficient in computing general evaluations.

\subsection{Reducing to unipotents}
With the reductions given above, one can express general evaluations $X_A(B,q^d)$ in terms of evaluations of statistics on unipotent matrices. We compute those evaluations explicitly in the next section (see Theorem \ref{theorem:general evaluations} below), but let us first use the fact of their polynomiality to prove Theorem \ref{theorem-main:polynomiality}.

% \begin{proof}[Proof of Theorem~\ref{theorem-main:polynomiality}]
% Theorem~\ref{theorem:general evaluations} below shows that for every two unipotent Jordan matrices $U_1$ and $U_2$ there exists a polynomial $P_{U_1,U_2}(t)$ such that $X_{U_1}(U_2,p^d) = P_{U_1,U_2}(p^d)$ when the matrices are considered over any finite field $\F_{p^d}$. Furthermore the polynomial in Theorem~\ref{theorem:general evaluations} depends only on the block sizes of $U_1$ and $U_2$.%, i.e. by the integer partitions enumerating the block sizes.
% If $\lambda$ and $\mu$ are the partitions enumerating the block sizes of $U_1$ and $U_2$ respectively, we write $P_{\lambda,\mu}(t)$ for the resulting polynomial.

% \Nir{Need to complete the general case - when $A$ and $B$ are general matrices, should give a formula with a product over the different eigenvalues...}
% \end{proof}
% %One other consequence of polynomiality is the following result.
\begin{lemma}\label{field_extension_evaluation}
 For $M_0$ a nilpotent matrix in Jordan form, denote $M_{\mu} = M_0 + \mu I$ for every scalar $\mu$. Let $A$ be a matrix with entries in $\F_q$ that is conjugate over $\F_{q^d}$ to
\[ \begin{pmatrix} M_{\lambda} & & &  \\ & M_{F(\lambda)} & &  \\ & & \ddots &  \\ & & & M_{F^{d-1}(\lambda)}  \end{pmatrix} \]
for $d>1$ such that $\F_q[\lambda] = \F_{q^d}$. Let $B$ be an $n\times n$ matrix with coefficients in $\F_q$. Then $\forall m\geq 1$,
$$X_A(B,q^m) = X_{M_{\lambda}} (B, q^{\lcm(m,d)})^{\gcd(m,d)}.$$
%\Nir{did we ever define the polynoials $P_{A,B}$ for general matrices?}
\end{lemma}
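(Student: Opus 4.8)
The plan is to reduce to the already-proved case $m=1$, i.e. Lemma~\ref{lemma: specific field extension}, by changing the base field from $\F_q$ to $\F_{q^m}$ and re-examining the conjugacy type of $A$ over this larger field. Write $g=\gcd(m,d)$ and $e=d/g$, so that $\lcm(m,d)=me$. The key observation is that $\F_{q^m}[\lambda]$ is the compositum $\F_{q^m}\cdot\F_{q^d}=\F_{q^{\lcm(m,d)}}=\F_{(q^m)^e}$, so over $\F_{q^m}$ the eigenvalue $\lambda$ generates a degree-$e$ extension rather than a degree-$d$ one, and the Frobenius of $\F_{q^m}$ is $F^m$. On the level of eigenvalues, the $d$ blocks $M_{\lambda},M_{F\lambda},\dots,M_{F^{d-1}\lambda}$ of $A$ carry indices in $\mathbb{Z}/d$, and the orbits of $i\mapsto i+m\pmod d$ (the $F^m$-orbits) are exactly the $g$ cosets of $\langle m\rangle=\langle g\rangle\leq\mathbb{Z}/d$, each of size $e$.

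First I would record the structural consequence: over $\F_{q^m}$ the matrix $A$ is conjugate to a block sum $\tilde A_1\oplus\cdots\oplus\tilde A_g$, where each $\tilde A_k$ has entries in $\F_{q^m}$ and, after extending scalars to $\F_{(q^m)^e}=\F_{q^{\lcm(m,d)}}$, is conjugate to $\bigoplus_{j=0}^{e-1}M_{(F^m)^j\mu_k}$ with $\mu_k=F^{i_k}\lambda$ for a choice of orbit representatives $i_1,\dots,i_g$. This is just the rational canonical form over $\F_{q^m}$: the elementary divisors of $A$ over $\overline{\F_q}$ are $(x-F^i\lambda)^{\ell}$ with $\ell$ ranging over the part sizes of $M_0$ and $i$ over $\mathbb{Z}/d$, and grouping them into $F^m$-orbits produces one $\F_{q^m}$-rational block $\tilde A_k$ per orbit; each such block still has nilpotent part $M_0$, and $\F_{q^m}[\mu_k]=\F_{q^m}[\lambda]=\F_{(q^m)^e}$ since $\mu_k=F^{i_k}\lambda$ shares a minimal polynomial over $\F_q$ with $\lambda$. (When $d\mid m$ one has $e=1$, all $\mu_k$ already lie in $\F_{q^m}$, each $\tilde A_k=M_{\mu_k}$, and the claimed identity is immediate; so assume $e>1$.)

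Now I would run the computation. Applying Lemma~\ref{lemma: specific field extension} over the base field $\F_{q^m}$ — with its "$q$" equal to $q^m$, its "$d$" equal to $e>1$, its "$\lambda$" equal to $\mu_k$, and its "$M_0$" equal to ours — gives $X_{\tilde A_k}(B,q^m)=X_{M_{\mu_k}}(B,(q^m)^e)=X_{M_{\mu_k}}(B,q^{\lcm(m,d)})$ for each $k$. Since $M_{\mu_k}=M_0+\mu_k I$ and $M_{\lambda}=M_0+\lambda I$ have the same nilpotent part, Lemma~\ref{lem: changing eigenvalue} (applied over $\F_{q^{\lcm(m,d)}}$, which contains $\lambda,\mu_1,\dots,\mu_g$ and the entries of $B$) turns this into $X_{\tilde A_k}(B,q^m)=X_{M_\lambda}(B,q^{\lcm(m,d)})$, independent of $k$. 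Finally, the $\tilde A_k$ have pairwise disjoint eigenvalue sets (distinct $F^m$-orbits), so iterating Lemma~\ref{lem:product with disjoint eigenvalues} (whose argument applies over any finite base field, here $\F_{q^m}$) gives the identity of statistics $X_A=X_{\tilde A_1\oplus\cdots\oplus\tilde A_g}=X_{\tilde A_1}\cdots X_{\tilde A_g}$; evaluating at $B$ yields
\[
X_A(B,q^m)=\prod_{k=1}^{g}X_{\tilde A_k}(B,q^m)=X_{M_\lambda}(B,q^{\lcm(m,d)})^{g}=X_{M_\lambda}(B,q^{\lcm(m,d)})^{\gcd(m,d)},
\]
as desired. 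I expect the only genuinely delicate step to be the structural claim of the second paragraph — matching the $\F_{q^m}$-rational canonical form of $A$ with the $g$ Frobenius orbits and checking that each resulting block precisely meets the hypotheses of Lemma~\ref{lemma: specific field extension} (same nilpotent part $M_0$, eigenvalue generating a degree-$e$ extension, blocks forming a single $\Fr_{q^m}$-orbit); the rest is bookkeeping with $\gcd$ and $\lcm$ together with invocations of lemmas already in hand. As a sanity check, for $m=1$ one gets $g=1$, $e=d$, $\lcm(1,d)=d$, and the formula collapses back to Lemma~\ref{lemma: specific field extension}.
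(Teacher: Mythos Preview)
Your proof is correct and follows essentially the same route as the paper's: decompose $A$ over $\F_{q^m}$ into $\gcd(m,d)$ blocks $\tilde A_k$ (the paper calls these $N_i$ and builds them via explicit Galois descent rather than citing rational canonical form), apply Lemma~\ref{lemma: specific field extension} over the base field $\F_{q^m}$ to each block, and then combine using Lemma~\ref{lem:product with disjoint eigenvalues} and an eigenvalue shift. One small point to tighten: your invocation of Lemma~\ref{lem: changing eigenvalue} to pass from $X_{M_{\mu_k}}(B,q^{\lcm(m,d)})$ to $X_{M_\lambda}(B,q^{\lcm(m,d)})$ is too quick, since that lemma's hypothesis has the second argument of the form $\text{scalar}+ \text{nilpotent}$; the paper inserts Lemma~\ref{lem:evaluations of disjoint eigenvalues} to first restrict $B$ to its $\mu_k$- (respectively $\lambda$-) generalized eigenspace, observes these have the same Jordan type because $B$ is defined over $\F_q$ and $\mu_k,\lambda$ are Galois conjugate, and only then applies Lemma~\ref{lem: changing eigenvalue}.
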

\begin{proof}
We will use the matrices $\overline{N}_i = \bigoplus_{j=0}^{d/\gcd(m,d)-1} M_{F^{i+mj}(\lambda)}$ where $0 \leq i \leq \gcd(m,d)-1$. 

Let $n(A)$ denote $\dim(A)$, and let $V \leq \mathbb{F}_{q^{\lcm(m,d)}}^{n(A)}$ be a vector subspace that is $A$-invariant and $T|_{A} \sim M_{F^{i}(\lambda)}$ (such vector spaces exist since $\F_{q^d} \leq \F_{q^{\lcm(m,d)}}$). Then as indicated in the proof of Lemma \ref{lemma: specific field extension}, the image $F^{mj}(V)$ is similarly $A$-invariant and $A|_{F^{mj}(V)} \sim M_{F^{i+mj}(\lambda)}$.

Now consider the sum $\overline{W} = \bigoplus_{j=0}^{d/\gcd(m,d)-1} F^{i+mj}(V)$, and observe that $A$ acts on 
$\overline{W}$ by the block matrix $\overline{N}_i$.
Then $\overline{W}$ is clearly $F^m$-invariant. By Galois descent (see e.g. \cite{conrad}) there exists a unique $W \leq \F_{q^m}^{n(A)}$ such that its extension to $\F_{q^{\lcm(m,d)}}^{n(A)}$ is $\overline{W}$. 
Observe that $\overline{W}$ is $A$-invariant as a sum of such subspaces. Then by uniqueness of $W$ it follows that it is also $A$-invariant. It follows that $A|_{W}$ is defined over $\F_{q^m}$ and conjugate to $\overline{N}_i$, and in particular $\overline{N}_i$ is conjugate to a matrix $N_i$ defined over $\F_{q^m}$.

Note that $A = \bigoplus_{i=0}^{\gcd(m,d)-1} N_i$. Since the $\{N_i\}_{i=0}^{\gcd(m,d)-1}$ have pairwise disjoint sets of eigenvalues, by Lemma~\ref{lem:product with disjoint eigenvalues},
\[ X_A(B, q^m) = \prod_{i=0}^{\gcd(m,d)-1} X_{N_i} (B, q^m) .\]
Let $t=q^m$. The smallest power $\ell$ for which $\F_{t^\ell}$ contains the eigenvalues of $A$ is $\lcm(m,d)/m$, so by Lemma~\ref{lemma: specific field extension}, \[ X_{N_i}(B, t) = X_{N_i} (B, t^{\lcm(m,d)/m}) = X_{M_{F^i(\lambda)}} (B, q^{\lcm(m,d)}) \]
where the second equality is apparent from the construction $\overline{N}_i = \bigoplus_{j=0}^{d/\gcd(m,d)-1} M_{F^{i+mj}(\lambda)}$. 

Now, in the Jordan normal form of $B$ over $\F_{q^{\lcm(m,d)}}$, let $B_{F^i(\lambda)}$ denote the sum of blocks corresponding to eigenvalue $F^i(\lambda)$. Note that $B_{F^{i}(\lambda)}$ and $B_{F^j(\lambda)}$ have the same Jordan form since $B$ was defined over $\F_{q}$ and $F^i(\lambda)$ and $F^j(\lambda)$ are Galois conjugate. Therefore, by Lemma~\ref{lem:evaluations of disjoint eigenvalues} and Lemma~\ref{lem: changing eigenvalue},
\[ X_{M_{F^i(\lambda)}} (B, q^{\lcm(m,d)}) = X_{M_{F^i(\lambda)}} (B_{F^i(\lambda)}, q^{\lcm(m,d)}) = X_{M_{\lambda}} (B_{\lambda}, q^{\lcm(m,d)}) = X_{M_{\lambda}} (B, q^{\lcm(m,d)}).\]
Finally, we have,
\[ X_{A}(B, q^m) = \prod_{i=0}^{\gcd(m,d)-1} X_{N_i} (B, q^m) = \prod_{i=0}^{\gcd(m,d)-1} X_{M_{\lambda}} (B, q^{\lcm(m,d)}) = X_{M_{\lambda}} (B, q^{\lcm(m,d)})^{\gcd(m,d)}. \]
\end{proof}

\begin{theorem}\label{theorem: reducing to unipotents}
For a pair of invertible matrices $A$ and $B$ over $\F_q$, let $\zeta_1,\ldots,\zeta_k$ be a set of representatives for the Galois-conjugacy classes of eigenvalues of $A$. For every $1\leq i\leq k$ let $\lambda_i$ and $\mu_i$ denote the partitions enumerating the Jordan block sizes with eigenvalue $\zeta_i$ in $A$ and $B$, respectively ($mu_i$ is the zero partition if $\zeta_i$ is not an eigenvalue of $B$). Then,
\[ P_{A,B} (q^m) = \prod_i  P_{\lambda_i, \mu_i} (q^{\lcm(m,d_i)})^{\gcd(m,d_i)} \]
where $d_i$ denotes the degree of the minimal polynomial of $\zeta_i$ over $\F_q$.
% \Nir{the phrasing of this theorem is really hard to follow - we should think about a way to streamline the language.}
\end{theorem}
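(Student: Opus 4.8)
The plan is to combine the three reduction lemmas already established---Lemma~\ref{lem:evaluations of disjoint eigenvalues} (disjoint eigenvalues of $B$ act trivially), Lemma~\ref{lem: changing eigenvalue} (eigenvalue of the pair can be shifted freely), and Lemma~\ref{field_extension_evaluation} (a Galois-orbit block of $A$ over $\F_q$ evaluates as a power of a unipotent evaluation over a larger field)---into a single multiplicative formula. First I would decompose $A$, up to conjugacy over $\F_q$, as a direct sum $A \sim \bigoplus_{i=1}^k A^{(i)}$ where $A^{(i)}$ collects all Jordan blocks whose eigenvalues lie in the Galois-conjugacy class of $\zeta_i$. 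Over $\F_{q^{d_i}}$ each $A^{(i)}$ becomes conjugate to $\bigoplus_{s=0}^{d_i-1} M^{(i)}_{F^s(\zeta_i)}$, where $M^{(i)}_0$ is the nilpotent Jordan matrix with block sizes given by $\lambda_i$; this is exactly the shape of the matrix in the hypothesis of Lemma~\ref{field_extension_evaluation}, and also of Lemma~\ref{lemma: specific field extension}. (When $d_i=1$ the block is already unipotent-shifted over $\F_q$ and the relevant lemma is just Lemma~\ref{lem: changing eigenvalue}; I would note this degenerate case explicitly so the statement with $d_i=1$ is not a gap.)

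Next, since the $A^{(i)}$ have pairwise disjoint sets of eigenvalues (distinct Galois classes), Lemma~\ref{lem:product with disjoint eigenvalues} gives $X_A = \prod_i X_{A^{(i)}}$ as statistics, hence $X_A(B,q^m) = \prod_i X_{A^{(i)}}(B,q^m)$ for every $m$. For each factor I would apply Lemma~\ref{field_extension_evaluation} with $d=d_i$ to get $X_{A^{(i)}}(B,q^m) = X_{M^{(i)}_{\zeta_i}}(B, q^{\lcm(m,d_i)})^{\gcd(m,d_i)}$. Then I peel off the part of $B$ relevant to the eigenvalue $\zeta_i$: over $\F_{q^{\lcm(m,d_i)}}$ write $B \sim B_{\zeta_i} \oplus B'$ where $B_{\zeta_i}$ is the sum of Jordan blocks of $B$ with eigenvalue $\zeta_i$; Lemma~\ref{lem:evaluations of disjoint eigenvalues} removes $B'$, and Lemma~\ref{lem: changing eigenvalue} identifies $X_{M^{(i)}_{\zeta_i}}(B_{\zeta_i}, \cdot)$ with $X_{J_{\lambda_i}}(J_{\mu_i}, \cdot)$, i.e.\ with $P_{\lambda_i,\mu_i}$ evaluated at the same field size (using that $\mu_i$ records precisely the block sizes of $B_{\zeta_i}$, or is empty when $\zeta_i\notin\operatorname{spec}(B)$, in which case $P_{\lambda_i,\emptyset}=0$ unless $\lambda_i$ is also empty). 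Assembling, $P_{A,B}(q^m) = X_A(B,q^m) = \prod_i P_{\lambda_i,\mu_i}(q^{\lcm(m,d_i)})^{\gcd(m,d_i)}$, which is the claim; the polynomiality in $q^m$ of each factor (Theorem~\ref{theorem-main:polynomiality}, via the polynomiality of $P_{\lambda,\mu}$ asserted there) then also recovers the periodic-polynomiality statement, with the periodic list $P_i(t)=\prod_j P_{\lambda_j,\mu_j}(t^{\lcm(i,d_j)/\gcd(\cdot)})^{\gcd(i,d_j)}$ at residue $i$ modulo $d=\lcm_i d_i$.

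The main obstacle I anticipate is purely bookkeeping rather than conceptual: one must be careful that the "eigenvalue $\zeta_i$" used to shift the nilpotent matrix is itself only defined over $\F_{q^{d_i}}$, not over $\F_q$, so the statement $X_{M^{(i)}_{\zeta_i}}(B,q^{\lcm(m,d_i)})$ only makes sense once the field has been enlarged enough to contain $\zeta_i$, i.e.\ when $d_i \mid \lcm(m,d_i)$, which always holds---but this needs to be checked rather than assumed, and it is exactly the role of the hypothesis $\F_q[\lambda]=\F_{q^d}$ in Lemma~\ref{field_extension_evaluation}. A secondary subtlety is making sure the decomposition $A\sim\bigoplus A^{(i)}$ and the identification of each $A^{(i)}$ with the Galois-orbit block matrix are compatible with the conjugacy-invariance of $X_{(\cdot)}$; since $X_A$ depends only on the conjugacy class of $A$ over $\F_q$ (and correspondingly $X_A(B,q^m)$ on the class of $A$ over $\F_{q^m}$), this is immediate, but I would state it once at the outset to justify replacing $A$ and $B$ by their rational canonical forms throughout.
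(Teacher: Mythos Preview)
Your proposal is correct and follows essentially the same route as the paper's proof: decompose $A$ into summands $A^{(i)}$ indexed by Galois-conjugacy classes of eigenvalues, factor $X_A(B,q^m)$ via Lemma~\ref{lem:product with disjoint eigenvalues}, apply Lemma~\ref{field_extension_evaluation} to each factor, then use Lemmas~\ref{lem:evaluations of disjoint eigenvalues} and~\ref{lem: changing eigenvalue} to strip $B$ down to its $\zeta_i$-part and shift the eigenvalue to $1$. Your explicit handling of the degenerate case $d_i=1$ (where Lemma~\ref{field_extension_evaluation} as stated requires $d>1$) is in fact a small improvement over the paper, which glosses over this point.
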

\begin{proof}[Proof of Theorems \ref{theorem: reducing to unipotents} and \ref{theorem-main:polynomiality}]
%To show that there exists a polynomial $P_{A,B}(t)$ for any evaluation $X_B(A)$ for inveritble matrices $A$ and $B$, we use the results in Section~\ref{sec: reductions}. 

First, let $f_A(q^m)$ be the characteristic polynomial of $A$ and factor $f_A$ over $\F_q[x]$ into distinct irreducible polynomials $f_1, f_2, \ldots, f_k$ with multiplicities $r_1, \ldots, r_k$. That is,
\[ f_A (q^m) = f_1(q^m)^{r_1} f_2 (q^m)^{r_2} \cdots f_k (q^m)^{r_k} . \]
Let $A_i$ be the restriction of $A$ to the subspace $\ker (f_i(A)^{r_i})$. Observe that the characteristic polynomial of $A_i$ is $f_i(q^m)^{r_i}$ and $A \sim \bigoplus_{i=1}^k A_i$. Since for every $i\neq j$ the matrices $A_i$ and $A_j$ have coprime characteristic polynomials, they have disjoint sets of eigenvalues. So by Proposition~\ref{lem:product with disjoint eigenvalues}, we have $X_A(B) = \prod_{i=1}^{k} X_{A_i} (B)$.

The characteristic polynomial of $A_i$ can be factored over a field extension as 
$$f_i(q^m)^{r_i} = (q^m-\zeta_i)^{r_i}(q^m-\Fr(\zeta_i))^{r_i}\cdots(q^m -\Fr^{d_i-1}(\zeta_i))^{r_i}$$ where $d_i = \deg(f_i)$ and $\zeta_i \in \F_{q^{d_i}}$. Furthermore, $A_i \sim \bigoplus_{j=0}^{d_i-1} \Fr^j(M_{\zeta_i})$ where $M_{\zeta_i}$ is an $r_i \times r_i$ matrix in Jordan normal form with eigenvalue $\zeta_i$. Therefore, by Lemma~\ref{field_extension_evaluation}, $X_{A_i}(B,q^m) = X_{M_{\zeta_i}}(B, q^{\lcm(m,d_i)})^{\gcd(m,d_i)}$.

Decompose $B$ as $N_{\zeta_i}\oplus N'$ where $N_{\zeta_i}$ is in Jordan form over $\F_{q^{d_i}}$ and has eigenvalue $\zeta_i$, and $N'$ has eigenvalues distinct from $\zeta_i$. This decomposition is possible since $\zeta_i \in \F_{q^{d_i}}$. Then, by Lemma~\ref{lem:evaluations of disjoint eigenvalues}
$$X_{M_{\zeta_i}}(B, q^{\lcm(m,d_i)})^{\gcd(m,d_i)}=X_{M_{\zeta_i}}(N_{\zeta_i}\oplus N', q^{\lcm(m,d_i)})^{\gcd(m,d_i)} = X_{M_{\zeta_i}}(N_{\zeta_i}, q^{\lcm(m,d_i)})^{\gcd(m,d_i)}.$$ 
%Indeed, consider a subspace $W$ counted by $X_{M_{r_i}} (B,q^{d_i})$, i.e. $B$-invariant $W$ such that $B|_W \sim M_{r_i}$. The only generalized eigenspace of $B|_W$ has eigenvalue $r_i$ and is thus contained in the domain of $N_{r_i}$.
%\Nir{we are using the faulty polynomiality assumption here - need to update to a correct version!}
Let $\lambda_i$ and $\mu_i$ be the partitions that represent the Jordan block sizes of $M_{\zeta_i}$ and $N_{\zeta_i}$, respectively. By Lemma~\ref{lem: changing eigenvalue}, $X_{M_{\zeta_i}}(N_{\zeta_i}, q^{\lcm(m,d_i)}) = P_{\lambda_i, \mu_i} (q^{\lcm(m,d_i)})$. Altogether,
\begin{equation}\label{eq:polynomials as product of unipotents}
    X_A(B,q^m) = \prod_{i=1}^{k} X_{A_i} (B,q^m) =\prod_{i=1}^k  P_{\lambda_i, \mu_i} (q^{\lcm(m,d_i)})^{\gcd(m,d_i)}
\end{equation}
as claimed. 

Note that this formula for $X_A(B,q^m)$ is by definition periodically polynomial as specified in Theorem \ref{theorem-main:polynomiality}. %In particular, the evaluation $X_A(B,q)$ coincides with a polynomial in $q$, hence proving Theorem \ref{theorem-main:polynomiality}.%
%-------------------------
%For the rationality of the zeta function
%\[
%Z(t) = \exp\left( \sum_{m\geq 1} X_A(B,q^m) \frac{t^m}{m} \right)
%\]
%recall that products at the level of coefficients corresponds to the Witt product of zeta functions (see e.g. \cite{ramachandran2015zeta}). Hence $Z(t) = Z_1(t) * \ldots * Z_k(t)$ where $*$ is the Witt product and $Z_i(t)$ is the zeta function of the coefficients $P_{\lambda_i,\mu_i}(q^{\lcm(m,d_i)})^{\gcd(m,d_i)}$.
%More generally, if $\{P_j(x)\}_{j|d}$ are integer polynomials indexed by the divisors of $d$, we claim that the zeta function generated by coefficients $a_m = P_{\gcd(m,d)}(q^m)$ is rational.
%Lastly, it is straightforward to check that for every polynomial $P(x) = \sum_n a_n x^n$ the zeta function generated by coefficients $P(q^m)$ is
%\[
%\prod_{n} \left( 1- q^n t \right)^{-a_n}
%\]
%and thus rational whenever $P\in \mathbb{Z}[x]$. Furthermore, the Witt product of rational functions of this form is again rational (see %e.g. \cite[\S1]{ramachandran2015zeta}).
\end{proof}
\begin{corollary}
When all eigenvalues of $A$ lie in $\F_q$, the evaluation $X_A(B,q^m)$ is polynomial in $q^{m}$.
\end{corollary}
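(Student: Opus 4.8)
The plan is to obtain this as the degenerate case $d_i \equiv 1$ of Theorem~\ref{theorem: reducing to unipotents}. First I would unwind the hypothesis: saying that every eigenvalue of $A$ lies in $\F_q$ means exactly that each Galois-conjugacy-class representative $\zeta_i$ of an eigenvalue of $A$ is itself an element of $\F_q$, so its minimal polynomial over $\F_q$ is the linear polynomial $x-\zeta_i$, i.e. $d_i = 1$. Then for every $m\geq 1$ we have $\lcm(m,d_i)=m$ and $\gcd(m,d_i)=1$, so the product formula of Theorem~\ref{theorem: reducing to unipotents} collapses to
\[
X_A(B,q^m) \;=\; \prod_{i=1}^{k} P_{\lambda_i,\mu_i}(q^m),
\]
where $\lambda_i$ (resp. $\mu_i$) records the Jordan block sizes of $A$ (resp. $B$) at the eigenvalue $\zeta_i$.

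Next I would invoke the second half of Theorem~\ref{theorem-main:polynomiality} (established through the explicit evaluations of \S\ref{sec: evaluations}, i.e. Theorem~\ref{theorem:general evaluations}): for each $i$ the function $P_{\lambda_i,\mu_i}$ is a genuine polynomial in $\mathbb{Z}[t]$. A finite product of polynomials in the single indeterminate $q^m$ is again such a polynomial, which is precisely the assertion.

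Honestly, there is no real obstacle here — all the weight is carried upstream, by Theorem~\ref{theorem: reducing to unipotents} together with the polynomiality of the unipotent evaluations $P_{\lambda,\mu}$. The only things to verify are bookkeeping: that the hypotheses of Theorem~\ref{theorem: reducing to unipotents} are in force (matrices defined over $\F_q$; here additionally $A$ split over $\F_q$), and that $\lcm(m,1)=m$ and $\gcd(m,1)=1$ so no field-extension correction survives. Should one prefer a self-contained derivation, the same conclusion falls out of re-running the proof of Theorem~\ref{theorem: reducing to unipotents} with every $d_i=1$: decompose $A\sim\bigoplus_i A_i$ into primary components and use Lemma~\ref{lem:product with disjoint eigenvalues} to factor $X_A(B)=\prod_i X_{A_i}(B)$; strip from $B$ the blocks with eigenvalue $\neq\zeta_i$ via Lemma~\ref{lem:evaluations of disjoint eigenvalues}; shift each $\zeta_i$ to $1$ via Lemma~\ref{lem: changing eigenvalue}; and conclude by the polynomiality of $X_{J_{\lambda_i}}(J_{\mu_i},q^m)$ proved in \S\ref{sec: evaluations}. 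Lemmas~\ref{lemma: specific field extension} and~\ref{field_extension_evaluation} play no role precisely because all $d_i$ equal $1$.
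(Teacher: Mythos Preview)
Your proposal is correct and is exactly the argument the paper has in mind: the corollary is stated without proof because it follows immediately from Theorem~\ref{theorem: reducing to unipotents} upon observing that all $d_i=1$, whence $\lcm(m,d_i)=m$, $\gcd(m,d_i)=1$, and the product $\prod_i P_{\lambda_i,\mu_i}(q^m)$ is a polynomial in $q^m$.
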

\begin{remark}
A special case of this corollary, when $A$ and $B$ are unipotent matrices in Jordan normal form, implies that there is a single polynomial $P_{\lambda,\mu}$ which coincides with $X_{J_\lambda}(J_\mu,q^m)$ for all prime powers $q^m$.
\end{remark}
\begin{comment}
\Nir{the next bit is out of place in this section. Also, we changed the notation, so need to adapt the statement here.}
The evaluation formula has a surprising consequence: the size of the largest Jordan block does not in fact matter--as long as its multiplicity is the same in $A$ and $B$.
\begin{corollary}
Suppose that $A$ and $B$ are unipotent Jordan matrices such that the largest Jordan block of $B$ has size $b$. If there are $c$ Jordan blocks in $A$ of size $b$ and $c$ Jordan blocks in $B$ of size at least $b$, then $X_A(B)$ is independent of $b$. 
\end{corollary}
\begin{proof}
This is immediate from Theorem~\ref{theorem:general evaluations} as $t_1=c_1$, so the coefficient of $b$ in the exponent is $0$.
\end{proof}  %wrong section?
\end{comment}

%In the following section, we will give an explicit formula for these latter evaluations, in particular establishing that these evaluations are polynomials in $q$.

\section{Calculating unipotent evaluation polynomials $P_{\lambda,\mu}$}\label{sec: evaluations}
Having shown that it suffices to consider unipotent matrics, we now explicitly determine the evaluations of our statistics on unipotent matrices. A unipotent conjugacy class is characterized by an integer partition, enumerating the sizes of the Jordan blocks. Hence, if for every partition $\lambda$ we let $J_\lambda$ denote a unipotent Jordan matrix with block sized specified by $\lambda$, the evaluations $P_{\lambda,\mu}(q):= X_{J_\lambda}(J_\mu,q)$ are parameterized by pairs of partitions. In this section, we will calculate $P_{\lambda,\mu}(q)$, showing in the process that they are polynomial in $q$ with positive integer coefficients.
To begin, recall the definition of the $q$-binomial coefficient.
\begin{definition}
The $q$-binomial coefficient $\binom{m}{n}_q$ counts the number of $n$-dimensional subspaces in $\F_q^m$. Explicitly,  
$$\binom{m}{n}_q=\frac{(q^m-1)(q^m-q)(q^{m}-q^2)\cdots (q^m-q^{n-1})}{(q^n-1)(q^n-q)\cdots (q^n-q^{n-1})}=\prod_{i=0}^{n-1}\frac{q^m-q^i}{q^n-q^i}$$ 
if $m\geq n$ and is $0$ otherwise.
\end{definition} 
It is a fact that the $q$-binomial coefficients are polynomials in $q$ with positive integer coefficients (see e.g. \cite{q-binomial}).

We start by considering the evaluation of statistics associated with matrices composed of Jordan blocks of equal size.

\begin{lemma} \label{lemma:equal blocks}
Fix matrices $A=J_{j^{a_j}}$ and $B=J_{1^{b_1},\ldots,k^{b_k}}$ and let $n_B$ denote the number of rows in $B$. Then,
$$X_A(B)= \binom{\sum_{i=j}^k b_i}{a_j}_q \cdot q^{a_j\left[ n_B - \sum_{i=j}^k (i-j+1)b_i - (j-1)a_j  \right]}$$
\end{lemma}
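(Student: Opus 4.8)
## Proof proposal for Lemma~\ref{lemma:equal blocks}

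The plan is to count directly the $B$-invariant subspaces $W$ of $\F_q^{n_B}$ on which $B$ acts as $a_j$ Jordan blocks of size exactly $j$. Write $B = J_{1^{b_1},\ldots,k^{b_k}}$ acting on $V = \F_q^{n_B}$, and let $\mathcal{J}_i$ denote the span of the standard basis vectors belonging to the blocks of size $i$, so $V = \bigoplus_i \mathcal{J}_i$. The key structural observation I would establish first is that for a $B$-invariant $W$, the restriction $B|_W$ has all Jordan blocks of size exactly $j$ if and only if $W \subseteq \ker(N^j)$ and $N^{j-1}$ is injective on $W$, where $N = B - I$ is the nilpotent part; equivalently $W \cap \ker(N^{j-1}) = 0$ while $W \subseteq \ker(N^j)$. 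So the count reduces to enumerating the $N$-invariant subspaces of the smaller space $K_j := \ker(N^j)$ that meet $K_{j-1} := \ker(N^{j-1})$ trivially and have dimension $j \cdot a_j$.

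Next I would set up the combinatorial bookkeeping. On $K_j$ the operator $N$ still acts with a known Jordan type: blocks of $B$ of size $i \le j$ contribute a full block of size $i$, while blocks of size $i > j$ contribute a block of size exactly $j$ (truncation). Thus inside $K_j$ the number of Jordan blocks of size $j$ is $\sum_{i \ge j} b_i$, and $\dim K_j - \dim K_{j-1} = \sum_{i\ge j} b_i$ is exactly the dimension of the "top layer" $K_j / K_{j-1}$. Passing to the quotient $K_j/K_{j-1}$, an $N$-invariant $W$ with $W\cap K_{j-1}=0$ injects, and the requirement that $B|_W$ be $a_j$ blocks of size $j$ translates to: the image $\bar W$ in $K_j/K_{j-1}$ is an arbitrary $a_j$-dimensional subspace, and $W$ is a lift of $\bar W$ that is $N$-invariant with the right Jordan structure. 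The factor $\binom{\sum_{i\ge j} b_i}{a_j}_q$ should be precisely the number of choices of $\bar W$, and the power of $q$ should count the $N$-invariant lifts of a fixed $\bar W$ — here a dimension count of the relevant space of lifts (a coset of a linear space, hence a power of $q$) gives the exponent $a_j[\,n_B - \sum_{i\ge j}(i-j+1)b_i - (j-1)a_j\,]$.

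The main obstacle I anticipate is justifying that the number of valid lifts of a fixed $\bar W$ is exactly $q^{a_j[\,\cdots\,]}$ with that precise exponent, and in particular that it is independent of the choice of $\bar W$ (so the two factors genuinely multiply). I would handle this by choosing, for a fixed $\bar W$, an explicit "standard" $N$-invariant lift $W_0$ spanned by strings $v, Nv, \ldots, N^{j-1}v$, and then parametrizing all other valid lifts as $W_0$ perturbed by a homomorphism into $K_{j-1}$ compatible with the $N$-action; the space of such perturbations modulo those that don't change $W$ is a vector space whose dimension I would compute. The bookkeeping reduces to: each of the $a_j$ string-generators can be adjusted by a vector in $K_{j-1}$, modulo the $(j-1)$-dimensional (per generator) ambiguity coming from $N$-invariance already "used up" inside $W_0$, and modulo adjustments landing in $W_0$ itself — and one checks $\dim K_{j-1} = n_B - \sum_{i\ge j} b_i$, so that $a_j(\dim K_{j-1}) - a_j(j-1) - (\text{correction})$ collapses to the stated exponent after substituting $\dim K_{j-1} = n_B - \sum_{i\ge j} b_i$ and $n_B - \sum_{i \ge j} b_i - (j-1)a_j$ rearranges into $n_B - \sum_{i\ge j}(i-j+1)b_i - (j-1)a_j$ once the truncation contributions $\sum_{i\ge j}(i-j)b_i$ are accounted for. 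Throughout, the fact that $q$-binomial coefficients count subspaces (used for the $\bar W$ count) and that affine spaces over $\F_q$ have cardinality a power of $q$ (used for the lift count) are the only external inputs, so polynomiality with positive integer coefficients follows at once.
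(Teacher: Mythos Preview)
Your overall strategy---project to the quotient $K_j/K_{j-1}$, count $a_j$-dimensional images there via a $q$-binomial, then count lifts---is exactly the paper's approach. However, your first paragraph contains a genuine error that would derail the argument as written: you claim that $B|_W$ has all Jordan blocks of size exactly $j$ if and only if $W\subseteq K_j$ and $W\cap K_{j-1}=0$ (equivalently, $N^{j-1}$ is injective on $W$). This is false. If $W$ is $N$-invariant with $N|_W\sim N_{j^{a_j}}$ and $v\in W$ generates a Jordan string, then $Nv\in W$ is nonzero and lies in $K_{j-1}$; indeed $\dim(W\cap K_{j-1})=(j-1)a_j$, not $0$. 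The correct characterization is that $W\subseteq K_j$ and the image $\pi(W)\subseteq K_j/K_{j-1}$ has dimension exactly $a_j$ (equivalently, $N^{j-1}|_W$ has rank $a_j$, not that it is injective). Your second paragraph in fact slides toward this correct picture, but the two paragraphs are inconsistent, and the phrase ``an $N$-invariant $W$ with $W\cap K_{j-1}=0$ injects'' cannot be salvaged.

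This error also propagates into your exponent bookkeeping: you write $\dim K_{j-1}=n_B-\sum_{i\ge j}b_i$, but in fact $\dim K_{j-1}=n_B-\sum_{i\ge j}(i-j+1)b_i$ (each block of size $i\ge j$ contributes $i-(j-1)$ to the corank, not $1$). With the correct formula the fiber count is clean: the sections of $\pi$ over a fixed $\overline{W}$ are a torsor for $\Hom(\overline{W},K_{j-1})$, of size $q^{a_j\dim K_{j-1}}$, and the overcount from sections landing in the same $W$ is $|\Hom(\overline{W},W\cap K_{j-1})|=q^{a_j(j-1)a_j}$, giving the stated exponent directly with no ``truncation contributions'' left to account for. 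Once you replace the false transversality claim with the correct statement about $\dim\pi(W)$, your outline matches the paper's proof essentially verbatim.
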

\begin{proof} 
Denote the ambient space on which $B$ acts by $V=\F_q^{n_B}$. We need to count the number of $B$-invariant subspaces $W_j \leq V$ that have $B|_{W_j} \sim J_{j^{a_j}}$.
With respect to $N:=B-I$ -- the nilpotent matrix with the same Jordan blocks as $B$ -- the counting problem amounts to computing the cardinality of the set
$$K_j=\{ W_j<V \mid N(W_j)\subset W_j, N|_{W_j}\sim N_{j^{a_j}}\}.$$
To do this, consider following quotients:

$$
\xymatrix{ \ker(N^{j}) \ar[d]^\pi & \subseteq & V \ar[d]^\pi \\
\ker(N^{j})/\ker(N^{{j-1}}) & \subseteq & V/\ker(N^{{j-1}})}
$$

We start by showing that the quotient map $\pi$ induces a surjection from $K_j$ to the set of $a_j$-dimensional subspaces of $\ker(N^{j})/\ker(N^{j-1})$. Then we proceed by observing that all fibers of this surjection have equal cardinality, hence the count will be given by $|K_j| = |\text{base}|\times|\text{fiber}|$.

\begin{claim} \label{claim: well-defined} 
For every $W_j\in K_j$ we have that $\overline{W}_j:=\pi(W_j)$ is a $a_j$-dimensional subspace of $\ker(N^{j})/\ker(N^{{j-1}})$ 
\end{claim}
\begin{proof}
Fix $W_j\in K_j$. Then since $ N|_{W_j}\sim N_{j^{a_j}}$, i.e. similar to a nilpotent matrix with $a_j$ Jordan blocks of size $j$, it follows that $W_j$ has a basis of the form 
\begin{align*}
    &\{v_1, Nv_1, N^2v_1, \dots, N^{j-1}v_1\\
    &v_2, Nv_2, N^2v_2, \dots, N^{j-1}v_2\\
    &\vdots \hspace{1cm}\vdots \hspace{1cm}\vdots\hspace{1cm} \vdots \\
    &v_{a_j}, Nv_{a_j}, N^2v_{a_j}, \dots, N^{j-1}v_{a_j}\}\\
\end{align*}
such that $N^{j}v_i=0$ for all $i$. Thus $\overline{W}_j \leq \ker(N^{j})$. It further follows that $\overline{W}_j$ is spanned by $\langle \pi (v_1), \pi (v_2), ... , \pi (v_{a_j})\rangle$ and is of dimension at most $a_j$. To see that the $\pi(v_i)$'s are indeed independent suppose
\begin{align*}
    0&=\lambda_1\pi(v_1)+\lambda_2\pi(v_2)+... +\lambda_{a_j}\pi(v_{a_j}) = \pi(\lambda_1v_1+\lambda_2v_2+...+\lambda_{a_j}v_{a_j}).
\end{align*}
It follows that $\lambda_1v_1+\lambda_2v_2+...+\lambda_{a_j}v_{a_j}\in \ker (N^{j-1})$. So, \begin{align*} 
    0&=N^{j-1}(\lambda_1v_1+\lambda_2v_2+...+\lambda_{a_j}v_{a_j})\\
    &=\lambda_1 N^{j-1}v_1+\lambda_2 N^{j-1}v_2+\dots+ \lambda_{a_j} N^{j-1}v_{a_j}.
\end{align*}  
But $\{N^{j-1}v_1, N^{j-1}v_2, \dots, N^{j-1}v_{a_j}\}$ is a subset of a basis of $W_j$, thus $\lambda_1=\lambda_2=\dots=\lambda_{a_j}=0$. It follows that $\pi(v_1), \pi(v_2), ... \pi(v_{a_j})$ are linearly independent and $\overline{W}_j$ is of dimension $a_j$. 
\end{proof}
\begin{claim}\label{claim:pi is surjective} For every $a_j$-dimensional subspace $\overline{W}_j \leq \ker(N^{j})/\ker(N^{j-1})$ there exists $W_j \in K_j$ such that $\pi(W_j)=\overline{W}_j$
\end{claim}
\begin{proof}
Let $\overline{W}_j$ be any $a_j$-dimensional subspace of $\ker(N^{j})/\ker(N^{j-1})$ and pick $\{v_1,\ldots, v_{a_j}\} \subset \ker(N^{j})$ such that $\{\pi(v_1), \pi(v_2), \dots, \pi(v_{a_j})\}$ form a basis for $\overline{W}_j$.
Consider the space 
\begin{align*}
W_j=\Span(&v_1, Nv_1, N^2v_1,\dots, N^{j-1}v_1,\\ & v_2, Nv_2, N^2v_2, \dots, N^{j-1}v_2,\\ & \vdots \hspace{1cm}\vdots \hspace{1cm} \vdots \\ &v_{a_j}, Nv_{a_j}, N^2v_{a_j}, \dots, N^{j-1}v_{a_j}). 
\end{align*}
We claim that $W_j\in K_j$ and $\pi(W_j)=\overline{W}_j$. Indeed, since all $N^r v_i \in \ker(N^{j-1})$ for $r \geq 1$ vanish in the quotient, it follows that $\pi(W_j) = \overline{W}_j$.

%\begin{align*}
%    \pi(W_1)=\langle &\pi(v_1), 0, 0, \dots, 0\\
%    &\pi(v_2), 0, 0, \dots, 0\\ 
%    &\vdots \hspace{1cm}\vdots \hspace{1cm}\vdots \\
%    &\pi(v_{c_1}), 0, 0, \dots, 0 \rangle \\
%    = \langle &\pi(v_1), \pi(v_2),\dots, \pi(v_{c_1}) \rangle =\overline{W}_1. 
%\end{align*}
To see that $W_j\in K_j$ it remains to show that $N(W_j)\subseteq W_j$ and $N|_{W_j}\sim N_{j^{a_j}}$. The $N$-invariance is clear since it obviously holds on the spanning set defining $W_j$. To determine the conjugacy class of $N|_{W_j}$, we only need to show that the spanning set is linearly independent, since this would imply that $\{ N^r v_i \}_{i,r}$ is a basis with respect to which $N|_{W_j}$ has the desired Jordan form.

Suppose that 
\begin{align*}
&\lambda_{1,1}v_1 + \lambda_{1,2}Nv_1 + \lambda_{1,3}N^2v_1 + \dots + \lambda_{1,j}N^{j-1}v_1 \\ 
+& \lambda_{2,1}v_2 + \lambda_{2,2}Nv_2 +  \lambda_{2,3}N^2v_2 +  \dots +  \lambda_{2,j}N^{j-1}v_2\\ & \vdots \hspace{1cm}\vdots \hspace{1cm} \vdots \hspace{1cm} \vdots \hspace{1cm}\vdots \hspace{1cm} \vdots \\ +& \lambda_{a_j,1}v_{a_j} +  \lambda_{a_j,2}Nv_{a_j} + \lambda_{a_j,3}N^2v_{a_j}, \dots + \lambda_{a_j,j}N^{j-1}v_{a_j}=0.
\end{align*}
Applying $\pi$ to this equation gives $$\lambda_{1,1}\pi(v_1)+\lambda_{2,1}\pi(v_2)+\lambda_{3,1}\pi(v_3)+\dots \lambda_{a_j,1}\pi(v_{a_j})=0.$$ But $\{\pi(v_1),\pi(v_2), \dots, \pi(v_{a_j})\}$ was chosen to be a basis for $\overline{W}_j$ so $\lambda_{1,1}=\lambda_{2,1}=\dots=\lambda_{a_j,1}=0$. Continue by induction to show that the first $r$ columns of the equation above vanish.

Assume that $\lambda_{i,s}=0$ for all $i$ and $s < r$, so the linear relation reduces to $$N^r \left( \sum_{i=1}^{a_j}\sum_{s=r}^{j-1} \lambda_{i,s}N^{s-r}v_i  \right) = 0.$$
This implies that the argument $\sum\sum \lambda_{i,s}N^{s-r}v_i$ already vanishes in the quotient. Applying $\pi$ to this argument gives a new relation
$$ \lambda_{1,r}\pi(v_1)+\ldots + \lambda_{a_j,r} \pi(v_{a_j}) = 0 $$
which again by linear independence of the $\pi(v_i)$'s gives $\lambda_{1,r}=\ldots=\lambda_{a_j,r}=0$, thus completing the induction step.

Thus the set $\{N^{j}v_i\}_{i,j}$ is indeed a basis, so $W_j\in K_j$ and $\pi(W_1)=\overline{W}_1$ as claimed.
\end{proof}

Note the following useful fact: for every $r\geq 0$
\begin{equation}\label{eq:dim ker}
\dim(\ker N^{r}) = n_B - \dim (\operatorname{im} N^r) = n_B - \sum_{i\geq r} (i-r)b_i
\end{equation}
where $b_i$ is the number of blocks of size $i$ in the Jordan matrix $N$. In particular, it follows that there are 
\begin{align*}
        \binom{\dim(\ker N^{j})- \dim(\ker N^{j-1})}{a_j}_q&= \binom{\sum_{i\geq j}b_i}{a_j}_q
\end{align*}
ways to pick a $a_j$-dimensional subspace $\overline{W}_j \leq \ker(N^{j})/\ker(N^{j-1})$.

We now turn to computing the number of $W_j\in K_j$ that project to a given quotient $\overline{W}_j$.
\begin{claim}
Fix a $a_j$-dimensional subspace $\overline{W}_j \leq \ker(N^{j})/\ker(N^{j-1})$.
There are precisely
$$q^{a_j\left[ n_B - \sum_{i\geq j} (i-j+1)b_i - (j-1)a_j \right]}$$
spaces $W_j \in K_j$ such that $\pi(W_j)=\overline{W}_j$.
\end{claim}

\begin{proof}
If $s:\overline{W}_j \to \ker(N^{j})$ is any section of $\pi$, with image $W_0$, then the proof of Claim \ref{claim:pi is surjective} shows that $W_j := \sum_r N^r(W_0)$ is a preimage of $\overline{W}_j$ belonging to $K_j$. So we begin by counting sections of $\pi: \pi^{-1}(\overline{W}_j)\to \overline{W}_j$. 

Fix a section $s_0: \overline{W}_j \to W_0$. Then the set of all sections is in bijection with $\hom(\overline{W}_j,\ker(N^{j-1}))$
where a map $\phi:\overline{W}_j \to \ker(N^{j-1}) $ corresponds to the section $s_0+\phi$. It follows that the number of sections is
$$q^{\dim \overline{W}_j \cdot \dim \ker(N^{j-1})} = q^{a_j(n_B - \sum_{i\geq j}(i-j+1)b_i)}.$$
However, in going from sections $s$ to preimages $W_j\in K_j$ there is some overcounting, which we now address. Fix some $W_j\in K_j$ such that $\pi(W_k)=\overline{W}_j$, we wish to count the number of sections of $\pi:W_j\to \overline{W}_j$. The same argument as in the previous paragraph shows that the number such sections is
$$
\lvert\hom( \overline{W}_j, W_j\cap \ker(N^{j-1}) ) \rvert = q^{a_j\cdot \dim (W_j\cap \ker(N^{j-1}))}.
$$
But because $N|_{W_j}\sim N_{j^{a_j}}$, it follows that $\dim (W_j\cap \ker(N^{j-1})) = (j-1)a_j$. This is the overcounting involved in counting sections, so we divide by this amount to get the claimed number of preimages.
\end{proof}

Combining all claims in the proof, the set $K_j$ of interest maps onto a set of size $\binom{\sum_{i\geq j} b_i}{a_j}_q$ with fibers of equal size given by the previous claim. It follows that
$$X_A(B)= |K_j| = \binom{\sum_{i\geq j} b_i}{a_j}_q \cdot q^{a_j\left[ n_B - \sum_{i\geq j}(i-j+1)b_i - (j-1)a_j\right]}.$$
This completes the proof of Lemma~\ref{lemma:equal blocks}.
\end{proof}

Using Lemma~\ref{lemma:equal blocks}, we now prove the general case by induction. 

\begin{theorem}[General Evaluation Formula]\label{theorem:general evaluations}
    Consider the matrices $A = J_{1^{a_1}2^{a_2},\ldots,k^{a_k}}$ and $B=J_{1^{b_1}2^{b_2},\ldots,\ell^{b_\ell}}$. Then,
\begin{equation}\label{eq:evaluation}
    X_A(B) = \prod_{j=1}^k \binom{\sum\limits_{i>j} (b_i-a_i) + b_j }{a_j}_q \cdot q^{ a_j\left( n_B - \sum\limits_{i\geq j}(i-j+1)b_i -(j-1)a_j - 2(j-1)\sum\limits_{i>j}a_i - \sum\limits_{i<j} a_i\right)}.
\end{equation} 
    In particular, only the number of blocks in $B$ of size $\geq k$ enters the formula, and not their specific sizes.
\end{theorem}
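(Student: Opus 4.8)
The plan is to prove Theorem~\ref{theorem:general evaluations} by induction on the number of distinct block sizes appearing in $A$, peeling off one block size at a time and invoking Lemma~\ref{lemma:equal blocks} as the engine of each step. The base case is $A = J_{j^{a_j}}$ a single block size, which is exactly Lemma~\ref{lemma:equal blocks} (possibly after noting that the formula's product collapses to a single factor and the exponent specializes correctly when $A$ has only one distinct part). For the inductive step, write $A = A' \oplus J_{j^{a_j}}$ where $j$ is the \emph{smallest} block size occurring in $A$ and $A'$ collects all larger blocks. I would count the spaces $W$ with $B(W)\subseteq W$ and $B|_W\sim A$ by a two-stage procedure: first choose the sub-subspace $W_j\leq W$ on which $B$ acts as $J_{j^{a_j}}$ — more precisely, $W_j$ should be the $\ker(N^j)$-part of $W$ in the sense made precise in the proof of Lemma~\ref{lemma:equal blocks} — and then choose the complementary data recording how $A'$ sits in the quotient.

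The key technical point is to set up the stratification so that the two choices are genuinely independent and each is itself an evaluation of the type already computed. Concretely, I expect the cleanest route is: (1) Every valid $W$ determines $W_j := W \cap \ker(N^j)$ together with a small extra piece; arrange instead to have $W \cap \ker(N^{j-1})$ play the role that isolates the size-$j$ blocks, and observe that $W/(W\cap\ker N^{j-1})$ is a $B$-invariant subspace of $V/\ker(N^{j-1})$ on which $B$ acts as $J_{(1^{a_{j+1}}\cdots)}$-shifted-down, i.e. as $A'$ with all block sizes reduced by... — here one must be careful, since quotienting by $\ker(N^{j-1})$ reduces every block size by $j-1$, so $A'$ becomes $J_{(j+1-(j-1))^{a_{j+1}},\ldots}$. (2) Count the number of ways to complete a given quotient subspace back to a full $W$ containing the prescribed size-$j$ part, which is a fibration count analogous to the section-counting in Lemma~\ref{lemma:equal blocks}: the fiber is governed by $\Hom$ of the new directions into $\ker(N^{j-1})$, modulo the overcounting coming from $W_j\cap\ker(N^{j-1})$. (3) Multiply, apply the inductive hypothesis to the quotient count over $V/\ker(N^{j-1})$ (whose Jordan type is read off from \eqref{eq:dim ker}), and reconcile the bookkeeping of $n_B$, the $\binom{\cdot}{a_j}_q$ factor, and the exponent.

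The main obstacle — and the place where all the real work is — is the exponent bookkeeping: showing that the power of $q$ contributed by the section/fiber count at stage $j$, added to the exponents produced by the inductive hypothesis applied to the quotient (which are expressed in terms of $n_{B}' = \dim(V/\ker N^{j-1})$ and the reduced block sizes $b_i - (\text{stuff})$), reassembles exactly into $a_j\bigl(n_B - \sum_{i\geq j}(i-j+1)b_i - (j-1)a_j - 2(j-1)\sum_{i>j}a_i - \sum_{i<j}a_i\bigr)$ and leaves the higher-$j$ exponents intact. The cross-terms $-2(j-1)\sum_{i>j}a_i$ and $-\sum_{i<j}a_i$ are precisely the shadows of two effects: the larger blocks $A'$ each meet $\ker(N^{j-1})$ in a $(j-1)$-dimensional subspace (contributing a $-(j-1)$ per larger block, appearing twice — once from the section count shrinking, once from the overcounting correction — hence the factor $2$), while the already-processed smaller blocks $A'$ reduce the ambient dimension available at stage $j$ (the $-\sum_{i<j}a_i$ term). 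I would verify this by carefully tracking, at stage $j$, the three quantities $\dim\overline{W}_j = a_j$, $\dim\ker(N^{j-1})$ \emph{restricted to the relevant quotient}, and $\dim(W_j\cap\ker N^{j-1})$, substituting \eqref{eq:dim ker} and collecting terms; the binomial factor falls out because after quotienting, $\ker(N^j)/\ker(N^{j-1})$ in $V/\ker(N^{j-1})$ has the "effective" number of size-$\geq j$ blocks equal to $\sum_{i>j}(b_i - a_i) + b_j$, since $a_i$ of the size-$\geq i$ blocks of $B$ (for each $i>j$) have already been consumed by the larger parts of $A$. The final sentence of the theorem — that only the \emph{number} of blocks of size $\geq k$ in $B$ matters — is then immediate from inspecting \eqref{eq:evaluation}, since for the top index $j=k$ every occurrence of $b_i$ with $i>k$ enters only through the sum $\sum_{i>k} b_i$ (inside the binomial and, via $n_B$ minus the $\sum_{i\geq k}(i-k+1)b_i$ terms, one checks the coefficient of each individual $b_i$ with $i>k$ cancels).
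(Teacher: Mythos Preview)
Your overall strategy---induct, invoke Lemma~\ref{lemma:equal blocks} at each stage, count pairs via a fibration, then divide out overcounting---matches the paper's, but there is a genuine gap: peeling off the \emph{smallest} block size $j$ does not give the clean two-stage decomposition you need. The problem is in your step (1)--(2): if $W_j \leq W$ is a subspace on which $N$ acts as $N_{j^{a_j}}$, the Jordan type of $W/W_j$ is \emph{not} determined by that of $W$ and need not be $A'$. For a concrete counterexample take $A = J_{1,2}$ on $W = \langle e_1, f_1, f_2\rangle$ with $Ne_1 = Nf_1 = 0$ and $Nf_2 = f_1$; choosing $W_1 = \langle f_1 \rangle$ makes $W/W_1$ have type $J_{1,1}$, not $J_2$. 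So the bijection between extensions $W \supseteq W_j$ of type $A$ and subspaces of the quotient of type $A'$ fails, and no uniform fiber count is available. Your alternative of quotienting the ambient space by $\ker(N^{j-1})$ does not help either: it shifts \emph{all} block sizes of $A$ down by $j-1$ rather than separating the $j$-blocks from $A'$. Relatedly, your reading of the cross-term $-\sum_{i<j}a_i$ as coming from ``already-processed smaller blocks'' is inconsistent with processing smallest-first, since at the first step there are none.

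The paper's fix is to peel off the \emph{largest} block size $k$ instead. One first restricts $B$ to $\ker(N^k)$ (harmless, since any relevant $W$ lies there), then chooses $W_k \leq V$ of type $J_{k^{a_k}}$. Because $k$ is now maximal in both $A$ and the restricted $B$, the quotient $V/W_k$ has a predictable Jordan type ($b'_k = b_k - a_k$ and $b'_i = b_i$ for $i<k$), and every $B$-invariant $W \supseteq W_k$ with $B|_W \sim A$ corresponds bijectively to $W' = W/W_k \leq V/W_k$ with $B'|_{W'} \sim A' := J_{1^{a_1}\cdots(k-1)^{a_{k-1}}}$. Induction on the largest block size then applies directly to $(A', B')$, one divides by the overcount $X_{J_{k^{a_k}}}(A)$ of choices of $W_k$ inside a fixed $W$ (again via Lemma~\ref{lemma:equal blocks}), and the exponent bookkeeping is of exactly the kind you anticipate---only with the direction of peeling reversed.
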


\begin{proof}
We prove this theorem by induction on $k$ -- the largest block size of $A$.

The base case is when there are 0 blocks in $A$, corresponding to the empty matrix acting on the zero vector space. The only vector space counted in this case is the zero space, so the count $X_A(B)=1$, as is the RHS of \eqref{eq:evaluation}.

Assume by induction that the formula is true for matrices $A'$ with block sizes smaller than $k$. Let $A=J_{1^{a_1}\ldots k^{a_k}}$ be a matrix with block sizes of at most $k$.

As in the proof of Lemma~\ref{lemma:equal blocks}, we set $N:= B-I$ for the nilpotent matrix.
Now, since $(A-I)^{k}=0$ it follows that any $B$-invariant space $W$ on which $B|_{W}\sim A$ has $N^{k}(W)=(B-I)^{k}(W)=0$. So for the purpose of counting subspaces $W$ of this form, it is sufficient to restrict the ambient space on which $B$ acts to $\ker(N^{k})$, which we denote by $V$. On this smaller ambient space, the transformations $B$ and $N$ have their Jordan blocks restricted to have size at most $k$. Thus, without loss of generality we assume that $B$ has only blocks of size at most $k$. Note that the terms $n_B-\sum_{i\geq j}(i-j+1)b_i$ remain unchanged by this replacement for all $j\leq k$ since they are measuring $\dim(\ker N^{j-1})$ by \eqref{eq:dim ker}. This in particular shows that the sizes of blocks larger than $k$ does not matter, and rather only their total count is significant.

By Lemma~\ref{lemma:equal blocks}, we know that the number of $B$-invariant subspaces on which $B$ acts by a transformation conjugate to $J_{k^{a_k}}$ is
\begin{equation} \label{eq:max block count}\binom{\sum_{i\geq k} b_i}{a_k}_q \cdot q^{a_k\left[ n_B - \sum_{i\geq k}(i-k+1)b_i - (k-1)a_k\right]}.\end{equation}
Let $W_k$ be any subspace of this form. We wish to count the number of way to extend $W_k$ in an $B$-invariant way so that $B$ acts by a transformation conjugate to $A$. This is equivalent to finding a subspace $W' \leq V/W_k$ that is $B$-invariant and on which $B$ is conjugate to $A':= J_{1^{a_1}\ldots (k-1)^{a_{k-1}}}$.
Since this latter matrix has blocks sizes smaller than $k$, our inductive hypothesis applies, and the number of such subspaces is known. However, note that the transformation induced by $B$ on the quotient, call it $B'$, is represented by a matrix obtained from $B$ by removing $a_k$ blocks of the maximal size $k$. That is, $B'$ has $b'_k=b_k-a_k$ and $b'_j=b_j$ for all $j\neq k$. Also note that $n_{B'} = n_B - ka_k$.

Therefore, the number of ways to pick the subspace $W' \leq V/W_k$ is
\begin{equation}\label{eq:small block count}\prod_{j=1}^{k-1} \binom{\sum\limits_{i>j}(b_i-a_i) + b_j}{a_j}_q \cdot q^{a_j\left[ n_B-ka_k-\sum\limits_{i>j}(i-j+1)b_i + (k-j+1)a_k -(j-1)a_j -2(j-1)\sum\limits_{i=j}^{k-1} a_i - \sum\limits_{i<j}a_i \right]}.\end{equation}
The preimage subspace $W$ for which $W/W_k = W'$ is uniquely determined by $W'$, and is an $B$-invariant subspace satisfying $B|_{W}\sim A$.

We thus have a count of the number of pairs $(W_k, W)$ such that $W_k\leq W$ are two $B$-invariant spaces with respective restrictions of $B$ conjugate to $J_{k^{a_k}}$ and $A$: this is the product of \eqref{eq:max block count} and \eqref{eq:small block count}. However, we are interested in counting only the set of subspaces $W$ alone, so we must divide by the number of pairs $(W_k,W)$ with a given space $W$.

Fixing $W$, the number of choices for a subspace $W_k\leq W$ that is $B$-invariant and on which $B$ acts as $J_{k^{a_k}}$ is again counted in Lemma~\ref{lemma:equal blocks}. Since $B$ acts on $W$ as the matrix $A$ we have $a_i=b_i$ for all $i$ in the lemma, thus the number of such subspaces $W_k\leq W$ is
$$\binom{a_k}{a_k}_q \cdot q^{a_k(n_A-a_k - (k-1)a_k)} = 1\cdot q^{a_k\sum\limits_{j=1}^{k-1} j a_j}.$$
Dividing the product of \eqref{eq:max block count} and \eqref{eq:small block count} by this overcounting factor, we get the number of desired spaces $W$ to be:
    $$
\prod_{i=j}^{k} \binom{\sum\limits_{i>j}(b_i-a_i) + b_j}{a_j}_q \cdot q^{f(\bar{a},\bar{b})}
$$
where the exponent is
\begin{eqnarray*}
    f(\bar{a},\bar{b}) &=& a_k\left[ n_B - \sum_{i\geq k}(i-k+1)b_i - (k-1)a_k \right] \\
    &+& \sum_{j=1}^{k-1} a_j\left( n_B - \sum_{i\geq j}(i-j+1)b_i -(j-1)a_k - (j-1)a_j
    - 2(j-1)\sum_{i=j}^{k-1} a_i - \sum_{i< j}a_i \right) \\
    &-& a_k\sum_{j=1}^{k-1} ja_j \\
    %&=& \sum_{i=1}^{n+1} c_i\left( (b_i-1)(t_i+c_i - 2\sum_{j=1}^ic_j) + \sum_{j=t_i+1}^{k} a_j\right) \\
    %& & + \sum_{i=2}^{n+1}c_i\left( (b_i-1)(c_1) - \sum_{j=i+1}^{n+1} c_j   -c_1b_i \right) \\
    &=& \sum_{j=1}^{k} a_j\left( n_B - \sum_{i\geq j}(i-j+1)b_i -(j-1)a_j - 2(j-1)\sum_{i\geq j}a_i - \sum_{i<j}a_i\right)
\end{eqnarray*}
completing the proof of the induction step.
\end{proof}

%\Nir{the next bit is out of place in this section. Also, we changed the notation, so need to adapt the statement here.}
The evaluation formula has a surprising consequence: the size of the largest Jordan block does not in fact matter -- as long as its multiplicity is the same in $A$ and $B$.
\begin{corollary}\label{corollary: largest block}
Suppose that $A$ and $B$ are unipotent Jordan matrices such that the largest Jordan block of $B$ is of size $n$. If $a_n=c$ and $\sum\limits_{i\geq n} b_i=c$, then $X_A(B)$ is independent of $n$. %Suppose that $A$ and $B$ are unipotent Jordan matrices such that the largest Jordan block of $B$ has size $b$. If there are $c$ Jordan blocks in $A$ of size $b$ and $c$ Jordan blocks in $B$ of size at least $b$, then $X_A(B)$ is independent of $b$. 
\end{corollary}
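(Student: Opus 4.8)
The plan is to deduce the corollary directly from the General Evaluation Formula (Theorem~\ref{theorem:general evaluations}): substitute the hypotheses into \eqref{eq:evaluation} and check that every occurrence of $n$ cancels. First I would pin down what the hypotheses force on the indexing data. Write $A = J_{1^{a_1}\cdots k^{a_k}}$ and $B = J_{1^{b_1}\cdots n^{b_n}}$. Since $n$ is the largest block size of $B$, we have $b_i = 0$ for $i > n$, so the hypothesis $\sum_{i\geq n} b_i = c$ reads $b_n = c$; in particular $c\geq 1$. Any $B$-invariant $W$ satisfies $(B-I)^n|_W = 0$, so $X_A(B) = 0$ unless $A$ has all blocks of size $\leq n$, i.e.\ $k\leq n$; and $a_n = c\geq 1$ forces $k\geq n$. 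Hence we may assume $k = n$, so that $A$ and $B$ both have largest block of size exactly $n$ with the common multiplicity $c$ there, and the remaining free data are the numbers $a_i,b_i$ for $i<n$.

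Next I would plug $n_B = nc + \sum_{i<n} i b_i$ into \eqref{eq:evaluation} and inspect the factor indexed by each $j\in\{1,\dots,n\}$. The $q$-binomial coefficients cause no trouble: for $j<n$ the quantity $\sum_{i>j}(b_i - a_i)$ absorbs the term $b_n - a_n = c-c = 0$, so it equals $\sum_{j<i<n}(b_i - a_i)$, which is $n$-free; and for $j = n$ it is $\binom{c}{c}_q = 1$. The content is in the exponents of $q$. For $1\leq j<n$, the only terms of the exponent's argument containing $n$ are $nc$ (from $n_B$), $-(n-j+1)c$ (the $i=n$ contribution to $\sum_{i\geq j}(i-j+1)b_i$, using $b_n=c$), and $-2(j-1)c$ (the $i=n$ contribution to $2(j-1)\sum_{i>j}a_i$, using $a_n=c$); their sum is $c\bigl(n-(n-j+1)-2(j-1)\bigr) = c(1-j)$, with no $n$. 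For $j=n$ the term $2(j-1)\sum_{i>j}a_i$ vanishes entirely, but there $\sum_{i\geq n}(i-n+1)b_i = b_n = c$ and $(j-1)a_j = (n-1)c$, and $nc - c - (n-1)c = 0$; so that exponent equals $c\bigl(\sum_{i<n} i b_i - \sum_{i<n} a_i\bigr)$, again $n$-free. Collecting the surviving pieces shows $X_A(B)$ is a product of $q$-binomials and powers of $q$ whose every ingredient depends only on $c$ and on $(a_i,b_i)_{i<n}$, which is the claim.

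The only real obstacle is bookkeeping: one must remember that in \eqref{eq:evaluation} the sums $\sum_{i\geq j}$ and $\sum_{i>j}$ do reach $i=n$ when $j<n$, whereas the $j=n$ factor has a genuinely different shape (since $\sum_{i>n}a_i = 0$), so it has to be handled separately; and one should double-check that the reindexed residual sums really are $n$-independent. Beyond careful accounting, no idea beyond Theorem~\ref{theorem:general evaluations} is needed.
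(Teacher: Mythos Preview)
Your proposal is correct and is exactly the approach the paper intends: the corollary is stated there without proof, as an immediate consequence of the General Evaluation Formula, and your substitution-and-cancellation argument is precisely how one verifies that the formula in Theorem~\ref{theorem:general evaluations} is $n$-free under the stated hypotheses. Your bookkeeping (separating the $j<n$ factors from the $j=n$ factor, and tracking the three $n$-dependent contributions $nc$, $-(n-j+1)c$, $-2(j-1)c$ for $j<n$) is accurate.
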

%\begin{proof}
%This is immediate from Theorem~\ref{theorem:general evaluations} as $t_1=c_1$, so the coefficient of $b$ in the exponent is $0$.
%\end{proof}  %wrong section?

\section{Product of $q$-character polynomials}\label{sec: recursive formula}
With the evaluations of our statistics explicitly calculated, we move to describe a recursive procedure for determining the product expansions of such $q$-character polynomials. The main consequences of the recursive formula are that many expansion coefficients must vanish, and that expansion coefficients are often polynomial in $q$.

We are seeking to determine the coefficients $r_{A,B}^C$ in the expansion 
$$
X_{A}\cdot X_{B} = \sum_{C}r_{A,B}^C X_C.$$

\subsection{Recursive formula for product expansions}
Our approach proceeds via the following key observation.
\begin{proposition}\label{prop:A smaller than B}
Given two matrices $A$, $C$ where $\dim(C)\leq \dim(A)$, then $X_A(C)=0$ unless $A\sim C$, in which case $X_A(C)=1$.
\end{proposition}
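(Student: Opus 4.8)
The plan is to unpack the definition of $X_A(C)$ and exploit the dimension inequality directly. Recall that $X_A(C)$ counts the $C$-invariant subspaces $W \leq \F_q^{\dim(C)}$ with $\dim W = \dim(A)$ on which $C$ acts (up to conjugacy) as $A$. Since $\dim(A) \geq \dim(C) = \dim\big(\F_q^{\dim(C)}\big)$, the only candidate subspace is the whole ambient space itself, which forces $\dim(A) = \dim(C)$; otherwise there are no subspaces of the required dimension at all and the count is $0$.

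So first I would observe: if $\dim(A) > \dim(C)$, there is no subspace $W$ of dimension $\dim(A)$ inside a space of dimension $\dim(C)$, hence $X_A(C) = 0$, and in this regime $A \not\sim C$ trivially (they have different sizes), so the claimed dichotomy holds vacuously. Next, if $\dim(A) = \dim(C)$, then the unique subspace of the correct dimension is $W = \F_q^{\dim(C)}$, which is automatically $C$-invariant. This single subspace is counted by $X_A(C)$ precisely when $C|_W = C$ is conjugate to $A$, i.e. when $C \sim A$. Therefore $X_A(C) = 1$ if $A \sim C$ and $X_A(C) = 0$ if $A \not\sim C$. Combining the two cases gives the statement: $X_A(C) = 0$ unless $A \sim C$, in which case $X_A(C) = 1$.

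There is essentially no obstacle here — the proposition is a direct consequence of the defining count together with the elementary fact that a vector space contains no subspace of strictly larger dimension and exactly one subspace of equal dimension (itself). The only point requiring a moment's care is handling the two sub-cases $\dim(A) > \dim(C)$ and $\dim(A) = \dim(C)$ uniformly, and noting that in the former case matrix similarity is impossible for dimension reasons, so the "unless $A \sim C$" clause is consistent. I would write this up in a few lines.
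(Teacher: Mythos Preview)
Your proof is correct and follows essentially the same approach as the paper: both arguments unpack the definition of $X_A(C)$, use the dimension inequality $\dim(W)\leq \dim(C)\leq \dim(A)$ to conclude that any counted subspace must be the whole space $\F_q^{\dim(C)}$ (forcing $\dim(A)=\dim(C)$), and then observe that this single subspace contributes exactly when $C\sim A$.
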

\begin{proof}
If we consider any subspace $W\subseteq \F^{\dim(C)}$, then $\dim(W)\leq \dim(c)\leq \dim(A)$. By the definition of $X_A$, if a subspace $W$ is counted by $X_A(C)$ then $\dim(W)=\dim(A)$ and therefore $\dim(A)=\dim(C)$. Otherwise, when $\dim(C)<\dim(A)$ no spaces are counted and $X_A(C)=0$. If $X_A(C)\neq 0$, then $W$ must be the entire space $\F^{\dim(C)}$, so the condition $C|_W\sim A$ simplifies to $C\sim A$. In this case, only one subspace is counted so $X_A(C)=1$.
\end{proof}

The proposition gives a recursive formula for the coefficients $r^C_{A,B}$.

\begin{theorem}[\textbf{Recursive formula for expansion coefficients}]\label{theorem: recursion formula}
For any matrix $C$, the expansion coefficient 
$r^C_{A,B}$ is given by,
\begin{equation} \label{eq:recursive formula}
r^C_{A,B} = X_{A}(C) \cdot X_{B}(C)-\sum_{\dim(M)< \dim(C)} r_{A,B}^M X_M(C).
\end{equation} 
The claim now follows from the previous proposition.
\end{theorem}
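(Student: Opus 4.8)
The plan is to evaluate both sides of the product expansion $X_A \cdot X_B = \sum_M r_{A,B}^M X_M$ at the matrix $C$ and solve for the coefficient $r_{A,B}^C$. This is the natural approach because Proposition~\ref{prop:A smaller than B} tells us exactly how the basis functions $X_M$ behave when evaluated on matrices of dimension $\leq \dim(M)$, which is precisely the regime that lets us peel off one coefficient at a time in order of increasing dimension.

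First I would recall that by \cite[Corollary 3.9]{GADISH2017450} the product $X_A \cdot X_B$ is a finite $\mathbb{Q}$-linear combination $\sum_M r_{A,B}^M X_M$, where $M$ ranges over conjugacy classes with $\dim(M) \leq \dim(A) + \dim(B)$, so the sum is finite and evaluating at any fixed $C$ is legitimate. Evaluating both sides at $C$ gives
\[
X_A(C)\cdot X_B(C) = \sum_{M} r_{A,B}^M X_M(C).
\]
Next I would split the sum on the right according to $\dim(M)$ relative to $\dim(C)$: the terms with $\dim(M) < \dim(C)$ stay as they are, and for the terms with $\dim(M) \geq \dim(C)$ I would invoke Proposition~\ref{prop:A smaller than B}. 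That proposition says $X_M(C) = 0$ whenever $\dim(C) < \dim(M)$, and $X_M(C) = 1$ when $C \sim M$ with $\dim(C) = \dim(M)$ (and $X_M(C)=0$ if $\dim(C)=\dim(M)$ but $C \not\sim M$). Hence among all $M$ with $\dim(M) \geq \dim(C)$, the only one contributing is $M = C$ itself (up to conjugacy), contributing exactly $r_{A,B}^C \cdot 1$. Isolating this term yields
\[
X_A(C)\cdot X_B(C) = r_{A,B}^C + \sum_{\dim(M) < \dim(C)} r_{A,B}^M X_M(C),
\]
which rearranges to the claimed formula \eqref{eq:recursive formula}.

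I do not anticipate a serious obstacle here — the statement is essentially an unwinding of Proposition~\ref{prop:A smaller than B} applied termwise to the (already established) finite expansion. The one point that deserves a careful sentence is the justification that the expansion is indexed by conjugacy classes of bounded dimension, so that "$\sum_{\dim(M) < \dim(C)}$" is a genuinely finite, well-defined sum and there is no ambiguity from choosing matrix representatives versus conjugacy classes; this is handled by citing \cite[Corollary 3.9]{GADISH2017450} and noting that $X_M$ depends only on the conjugacy class of $M$. The recursion is then well-founded because each coefficient $r_{A,B}^C$ is expressed purely in terms of the explicitly computable evaluations $X_A(C), X_B(C), X_M(C)$ (available from Theorem~\ref{theorem:general evaluations}) and coefficients $r_{A,B}^M$ with $\dim(M) < \dim(C)$, with the base case $\dim(C) = \max(\dim A, \dim B)$ handled directly by Lemma~\ref{lem:product with disjoint eigenvalues}-type reasoning or simply by the empty sum.
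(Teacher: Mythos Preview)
Your proposal is correct and follows essentially the same approach as the paper: evaluate the product expansion at $C$, use Proposition~\ref{prop:A smaller than B} to kill all terms with $\dim(M)\geq \dim(C)$ except $M=C$ (which contributes $r_{A,B}^C\cdot 1$), and rearrange. Your write-up is in fact more detailed than the paper's, which compresses this into a single displayed equation.
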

\begin{proof}[Proof of Theorem~\ref{theorem: recursion formula}]
By evaluating the statistics in the equation described in Theorem \ref{theorem-main: product expansions} at a given conjugacy class of a matrix $C$, we see that 
\[
X_{A}(C) \cdot X_{B}(C)=r^C_{A,B}X_{C}(C)+\sum_{\dim(M)< \dim(C)} r_{A,B}^M X_M(C).
\]
\end{proof}

This recursive formula allows us to apply the following inductive procedure to calculate the coefficients.

\par \begin{enumerate}
  \item Start with $k=\max\{\dim(A),\dim(B)\}$.
  \item List the conjugacy classes of matrices $C$ with $\dim(C)=k$, and for each class calculate a Jordan form for $C$ (possibly in a field extension).
  \item Evaluate $X_{A}$, $X_{B}$ and $X_{M}$ for all $M$ with $\dim(M)<\dim(C)$ appearing in \eqref{eq:recursive formula} using the formula in Theorem~\ref{theorem: reducing to unipotents}. Then Theorem~\ref{theorem: recursion formula} determines $r^C_{A,B}$.
  \item Increment $k$ by 1 and return to Step (2). Repeat these steps until $k=\dim(A)+\dim(B)$.
\end{enumerate} 

\begin{comment}
\begin{theorem}[\textbf{Recursive formula for expansion coefficients}]\label{theorem: expansion coeff}
\Nir{Write down an explicit recursive formula.}
For a matrix $C'$, the product expansion coefficient $R_{A,B}^{C'}(q^d)$ is given by: 
\[ R_{A,B}^{C'}(q^d) = X_{A}(C') \cdot X_{B}(C') - \sum_{\dim(C)< \dim(C')} R_{A,B}^C(q^d) X_C(C')\]. 
\end{theorem}

\end{comment}

%\begin{definition}
%Given three square matrices $(A,B,C)$ over $\F_q$, considering them as matrices over field extensions $\F_{q^d}$ one gets corresponding product expansion coefficients $R_{A,B}^C(q^d)$ for every $d\geq 1$.

%Explicitly, if $(A_{/q^d}, B_{/q^d},C_{/q^d})$ are the respective extensions of scalars of $(A,B,C)$ to $\F_{q^d}$, denote by $R^C_{A,B}(q^d)$ the coefficient of $X_{C_{/q^d}}$ in the product expansion of $X_{A_{/q^d}}\cdot X_{B_{/q^d}}$.
%\end{definition}

\subsection{Properties of expansion coefficients}
Many of the coefficients in the aforementioned product expansion are, in fact, zero. For example, a necessary condition for the nonvanishing of a coefficient $r^C_{A,B}$ is that the largest Jordan block of $C$ is exactly the same size as the largest one among those of $A$ and $B$. %polynomial is given by Proposition~\ref{prop: largest Jordan block}, which we prove with Theorem~\ref{theorem: recursion formula} and Theorem~\ref{theorem:general evaluations}.
To prove this and other vanishing results for the expansion coefficients,
we posit the following general strategy.
%that define necessary properties for matrices that yield non-zero contributions to product expansions.This theorem will be later be used to prove various facts about product expansions.

\begin{lemma}\label{lemma: meta}
    Suppose $\mathcal P$ is a collection of conjugacy classes in $\bigcup_{n\geq 1}\Gl_n(\F_q)$ such that $A,B \in \mathcal{P}$ and for all $C \notin \mathcal{P}$, there exists a conjugacy class $C'$ of strictly smaller dimension such that $X_{C'}(C)=1$ and $X_D(C)=X_D(C')$ for any $D \in \mathcal P$. Then $r_{A,B}^{(-)}$ is supported on $\mathcal{P}$, i.e. $r_{A,B}^C \neq 0$ only if $C \in \mathcal{P}$.  % change form of conclusion, 
\end{lemma}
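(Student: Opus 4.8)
The plan is to prove the contrapositive via induction on $\dim(C)$: for every conjugacy class $C \notin \mathcal{P}$, I want to show $r_{A,B}^C = 0$, assuming (inductively) that $r_{A,B}^M = 0$ for every $M \notin \mathcal{P}$ with $\dim(M) < \dim(C)$. The base case is handled by the fact that $A, B \in \mathcal{P}$, so the smallest-dimensional classes that could have nonzero coefficient are automatically in $\mathcal{P}$ (any $C$ with $\dim(C) < \max(\dim(A),\dim(B))$ has $r_{A,B}^C = 0$ trivially, and the first classes where coefficients can appear are $A$ and $B$ themselves when they have equal dimension, which lie in $\mathcal{P}$).

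For the inductive step, I would take $C \notin \mathcal{P}$ and apply the recursive formula of Theorem~\ref{theorem: recursion formula}:
\[
r^C_{A,B} = X_A(C)\cdot X_B(C) - \sum_{\dim(M) < \dim(C)} r_{A,B}^M \, X_M(C).
\]
By the hypothesis of the lemma, there is a class $C'$ with $\dim(C') < \dim(C)$ satisfying $X_{C'}(C) = 1$ and $X_D(C) = X_D(C')$ for all $D \in \mathcal{P}$. I would also invoke the recursive formula applied to $C'$ in place of $C$:
\[
r^{C'}_{A,B} = X_A(C')\cdot X_B(C') - \sum_{\dim(M) < \dim(C')} r_{A,B}^M \, X_M(C').
\]
The key move is to compare the two. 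Since $A, B \in \mathcal{P}$, we have $X_A(C) = X_A(C')$ and $X_B(C) = X_B(C')$, so the leading terms agree. In the correction sum for $C$, split the index set: terms with $M \in \mathcal{P}$ contribute $r_{A,B}^M X_M(C) = r_{A,B}^M X_M(C')$ (again using $X_M(C) = X_M(C')$ for $M \in \mathcal{P}$), while terms with $M \notin \mathcal{P}$ have $r_{A,B}^M = 0$ by the inductive hypothesis (note $\dim(M) < \dim(C)$). Hence the sum over $\dim(M) < \dim(C)$ collapses to the sum over $M \in \mathcal{P}$ with $\dim(M) < \dim(C)$ of $r_{A,B}^M X_M(C')$. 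I need this to match the correction sum appearing in the formula for $r^{C'}_{A,B}$: that sum runs over $\dim(M) < \dim(C')$, but since $r_{A,B}^M = 0$ for $M \notin \mathcal{P}$ and $X_{M}(C') = 0$ whenever $\dim(M) > \dim(C')$ with $M \neq C'$ (Proposition~\ref{prop:A smaller than B}), the only possibly-surviving terms with $\dim(C') \le \dim(M) < \dim(C)$ are $M = C'$ itself, contributing $r_{A,B}^{C'} X_{C'}(C') = r_{A,B}^{C'}$. Putting it together:
\[
r^C_{A,B} = X_A(C')X_B(C') - \Big(\sum_{\substack{M \in \mathcal{P} \\ \dim(M) < \dim(C')}} r_{A,B}^M X_M(C')\Big) - r_{A,B}^{C'} = r^{C'}_{A,B} - r^{C'}_{A,B} = 0.
\]

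The main obstacle is bookkeeping the index ranges of the two correction sums carefully — in particular making sure the contributions from classes $M$ with $\dim(C') \le \dim(M) < \dim(C)$ are accounted for (they vanish unless $M = C'$, by Proposition~\ref{prop:A smaller than B} together with the inductive hypothesis) and that $C'$ itself is correctly pulled out, so that the difference telescopes exactly to $r^{C'}_{A,B} - r^{C'}_{A,B}$. A subtle point worth checking is whether $C'$ could itself lie outside $\mathcal{P}$; if so, $r_{A,B}^{C'} = 0$ by induction and the argument still goes through (indeed it is even simpler, since then all correction terms for $C$ with $M \in \mathcal{P}$ reappear verbatim and one shows $r^C_{A,B}$ equals the already-vanished expression). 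Once this is set up the computation is routine; the only real content is the simultaneous use of the recursion at $C$ and at $C'$ and the hypothesis $X_D(C) = X_D(C')$ to force cancellation.
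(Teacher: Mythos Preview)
Your proof is correct and follows essentially the same approach as the paper's: both argue by induction on dimension, apply the recursive formula of Theorem~\ref{theorem: recursion formula} at $C$ and at the smaller $C'$, and use the hypothesis $X_D(C)=X_D(C')$ for $D\in\mathcal{P}$ together with Proposition~\ref{prop:A smaller than B} to make the two expressions cancel. The only cosmetic difference is bookkeeping---the paper fixes a total ordering on conjugacy classes and splits the correction sum by index position relative to $C'$, whereas you first discard $M\notin\mathcal{P}$ via the inductive hypothesis and then handle the range $\dim(C')\le\dim(M)<\dim(C)$; the content is the same, and your remark about the case $C'\notin\mathcal{P}$ is handled correctly.
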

\begin{proof}
We show that $r_{A,B}^C = 0$ whenever $C \notin \mathcal{P}$ by induction. List all of the conjugacy classes of matrices with dimensions between $1$ and $n_1+n_2$ inclusive, $C_1, C_2, \ldots, C_k$ such that $\dim C_1 \leq \dim C_2 \leq \cdots \leq \dim C_k$. 

Assume by induction that for all $i\leq m$, the coefficient $r_{A,B}^{C_i}$ of $X_{C_i}$ is $0$ whenever $C_i \notin \mathcal{P}$, we will prove the same holds for $C_{m+1}$. From Theorem~\ref{theorem: recursion formula},
\[ r_{A,B}^{C_{m+1}} =  X_{A}(C_{m+1}) \cdot X_{B}(C_{m+1}) - \sum_{i=1}^m r_{A,B}^{C_i} X_{C_i} (C_{m+1}) .\]

If $C_{m+1} \notin \mathcal{P}$, then by assumption there exists a smaller matrix $C_{\ell}$ with $\ell < m+1$ such that $X_A(C_{m+1}) = X_A(C_{\ell})$ and $X_B(C_{m+1}) = X_B(C_{\ell})$. Additionally, for all $1 \leq i \leq m$ such that $r_{A,B}^{C_i}$ is nonzero, the induction hypothesis implies that $C_i \in \mathcal{P}$, and therefore $X_{C_i}(C_{m+1}) = X_{C_i}(C_\ell)$. Furthermore, for all $i$ with $\ell+1 \leq i \leq m$, Proposition~\ref{prop:A smaller than B} implies that $X_{C_i}(C_\ell) = 0$ since $\dim(C_\ell) \leq \dim(C_i)$ and $C_i \neq C_\ell$. Therefore,
\begin{align*}
    r_{A,B}^{C_{m+1}} &= X_A(C_{m+1}) X_B(C_{m+1}) - \sum_{i=1}^m r_{A,B}^{C_i} X_{C_i}(C_{m+1}) \\
    &= X_A(C_{m+1}) X_B(C_{m+1}) - \sum_{i=1}^{\ell-1} r_{A,B}^{C_i} X_{C_i}(C_{m+1}) - r_{A,B}^{C_\ell} X_{C_\ell}(C_{m+1}) -\sum_{i=\ell+1}^m r_{A,B}^{C_i} X_{C_i}(C_{m+1}) \\
    &= X_A(C_\ell) X_B(C_\ell) - \sum_{i=1}^{\ell-1} r_{A,B}^{C_i} X_{C_i} (C_\ell)  - r_{A,B}^{C_\ell} = 0,
\end{align*}
where the last equality follows from Theorem~\ref{theorem: recursion formula} applied to $r_{A,B}^{C_\ell}$. This completes the induction step and we conclude that the coefficient $r_{A,B}^C$ is only nonzero if $C \in \mathcal{P}$.
\end{proof}

Let us apply Lemma \ref{lemma: meta} to a few notable examples for the collection $\mathcal{P}$.
\begin{corollary}\label{corollary: no new eigenvalues}
If $C$ contains any eigenvalue that is not an eigenvalue of $A$ or $B$, then $r_{A,B}^C=0$.
\end{corollary}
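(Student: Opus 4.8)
The plan is to invoke Lemma~\ref{lemma: meta} with a well-chosen collection $\mathcal P$. Let $\Lambda$ be the set of all eigenvalues (in the algebraic closure) of $A$ together with those of $B$; equivalently, since the statistics only see Galois-conjugacy classes, take $\Lambda$ to be closed under Frobenius. Define $\mathcal P$ to be the collection of all conjugacy classes $C$ (over $\F_q$, in any dimension) all of whose eigenvalues lie in $\Lambda$. Clearly $A,B\in\mathcal P$, so it remains to verify the descent condition: for every $C\notin\mathcal P$ there is a conjugacy class $C'$ with $\dim(C')<\dim(C)$, $X_{C'}(C)=1$, and $X_D(C)=X_D(C')$ for all $D\in\mathcal P$.

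First I would produce the candidate $C'$. If $C\notin\mathcal P$ then $C$ has some eigenvalue $\zeta\notin\Lambda$; let $f$ be the irreducible factor of the characteristic polynomial of $C$ over $\F_q$ having $\zeta$ as a root, with multiplicity $r\geq 1$, and let $C = C_\zeta \oplus C'$ be the decomposition of $C$ according to the primary decomposition, where $C_\zeta$ is the part whose characteristic polynomial is $f^{\,r}$ (all of its eigenvalues Galois-conjugate to $\zeta$, hence outside $\Lambda$ since $\Lambda$ is Frobenius-closed) and $C'$ collects the remaining Jordan blocks. This $C'$ has strictly smaller dimension because $C_\zeta$ is nonzero. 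By Proposition~\ref{prop:A smaller than B}, $X_{C'}(C')=1$; and since $C' \sim C'\oplus(\text{nothing})$ sits inside $C=C_\zeta\oplus C'$ as a $C$-invariant subspace on which $C$ restricts to $C'$, while any other such subspace of dimension $\dim(C')$ would have to avoid the generalized eigenspaces of $C_\zeta$ entirely (their eigenvalues not being eigenvalues of $C'$), one gets $X_{C'}(C)=1$ as well --- this is essentially the content of Lemma~\ref{lem:evaluations of disjoint eigenvalues} applied with $B=C$, $B_1=C'$, $B_2=C_\zeta$, and $A=C'$, noting $C_\zeta$ shares no eigenvalue with $C'$.

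Next I would check the compatibility $X_D(C)=X_D(C')$ for every $D\in\mathcal P$. Since $D$ has all eigenvalues in $\Lambda$, and $C_\zeta$ has all eigenvalues outside $\Lambda$, the matrices $D$ and $C_\zeta$ share no eigenvalue. Applying Lemma~\ref{lem:evaluations of disjoint eigenvalues} to the decomposition $C\sim C'\oplus C_\zeta$ (with $B_1=C'$, $B_2=C_\zeta$, and the role of ``$A$'' played by $D$) yields exactly $X_D(C)=X_D(C')$. Both hypotheses of Lemma~\ref{lemma: meta} are therefore satisfied, so $r^C_{A,B}\neq 0$ forces $C\in\mathcal P$, i.e. every eigenvalue of $C$ is an eigenvalue of $A$ or of $B$ --- which is the assertion of the corollary.

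The only subtlety --- and the step I would be most careful about --- is the bookkeeping around Galois conjugacy: ``eigenvalue of $A$'' must be read up to Frobenius orbits, because a matrix over $\F_q$ cannot exhibit a single Galois-conjugate of an eigenvalue without all of them, and the statistics $X_{(-)}$ are insensitive to which representative is chosen (Lemma~\ref{lem: changing eigenvalue} and Lemma~\ref{lemma: specific field extension}). Making $\Lambda$ Frobenius-closed from the outset handles this cleanly, so that $C\notin\mathcal P$ genuinely produces an eigenvalue of $C$ whose entire Galois orbit is disjoint from the eigenvalues of $A$ and $B$, and the disjoint-eigenvalue lemmas apply without obstruction. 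No real calculation is involved beyond this; the corollary is a direct packaging of the two disjoint-eigenvalue lemmas through the meta-lemma.
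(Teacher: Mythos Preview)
Your proof is correct and follows essentially the same approach as the paper: apply Lemma~\ref{lemma: meta} with $\mathcal P$ the classes whose eigenvalues all lie among those of $A$ and $B$, and for $C\notin\mathcal P$ strip off a summand with foreign eigenvalues, invoking Lemma~\ref{lem:evaluations of disjoint eigenvalues} for both required conditions. The only cosmetic difference is that the paper takes $C'$ to be the maximal ``good'' summand (removing all foreign eigenvalues at once) whereas you remove only a single Galois orbit; since Lemma~\ref{lemma: meta} does not require $C'\in\mathcal P$, both choices work equally well.
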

\begin{proof}
Apply Lemma~\ref{lemma: meta} to the collection of conjugacy classes $\mathcal P$ that only contain eigenvalues that appear in either $A$ or $B$. For every $C \notin \mathcal{P}$, let $C'$ be the restriction of $C$ to the maximal subspace on which $C$ acts with eigenvalues that appear in either $A$ or $B$. % In rational normal form, $C'$ is the submatrix of $C$ that contains all blocks with eigenvalues in $A$ or $B$.
Then, by Lemma~\ref{lem:evaluations of disjoint eigenvalues}, $X_{D}(C)=X_{D}(C')$ and $X_{C'}(C) = X_{C'}(C')=1$ for all $D \in \mathcal{P}$. The conditions of Lemma ~\ref{lemma: meta} are met, and we are done.
\end{proof}

\begin{corollary}\label{corollary: largest Jordan block}
Let $A$ and $B$ be unipotent Jordan matrices such that the largest Jordan blocks of $A$ and $B$ are $a$ and $b$, respectively, with $a \leq b$. Consider the expansion
\[ X_A \cdot X_B = \sum_C r_{A,B}^C X_C.\]
If $r_{A,B}^C \neq 0$, then the largest Jordan block of $C$ has size exactly $b$.
\end{corollary}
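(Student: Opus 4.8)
The plan is to strip the constraint on $C$ off in three layers: its eigenvalues, an upper bound on its largest Jordan block, and a matching lower bound. Each layer is handled by a different tool already available.

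Since $A$ and $B$ are unipotent, Corollary~\ref{corollary: no new eigenvalues} already gives $r_{A,B}^C=0$ unless every eigenvalue of $C$ equals $1$; so from now on assume $C$ is a unipotent Jordan matrix and set $N=C-I$. \emph{Upper bound.} To show that the largest block of a contributing $C$ is at most $b$, I would apply Lemma~\ref{lemma: meta} to the collection $\mathcal P$ of conjugacy classes of unipotent matrices all of whose Jordan blocks have size $\leq b$; since $a\leq b$, both $A,B\in\mathcal P$. Given a class $C\notin\mathcal P$, I must produce a class $C'$ of strictly smaller dimension with $X_{C'}(C)=1$ and $X_D(C)=X_D(C')$ for all $D\in\mathcal P$. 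If $C$ is not unipotent, let $C'$ be the restriction of $C$ to its generalized $1$-eigenspace: then $\dim C'<\dim C$, Lemma~\ref{lem:evaluations of disjoint eigenvalues} gives $X_D(C)=X_D(C')$ for every unipotent $D$, and $X_{C'}(C)=X_{C'}(C')=1$ by Proposition~\ref{prop:A smaller than B}. If instead $C$ is unipotent with a block of size $>b$, let $C'=C|_{\ker(N^b)}$; its dimension $\sum_i\min(n_i,b)$ is strictly less than $\sum_i n_i=\dim C$ because some $n_i>b$. Any $C$-invariant $W$ on which $C$ acts with all Jordan blocks of size $\leq b$ satisfies $N^b|_W=0$, hence $W\subseteq\ker(N^b)$; applied to the subspaces counted by $X_D(C)$ for $D\in\mathcal P$ this gives $X_D(C)=X_D(C')$, and applied to a subspace counted by $X_{C'}(C)$ it forces $W=\ker(N^b)$ by equality of dimensions, so $X_{C'}(C)=1$. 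Lemma~\ref{lemma: meta} then yields $r_{A,B}^C=0$ whenever $C$ has a block larger than $b$.

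\emph{Lower bound.} That the largest block of a contributing $C$ is at least $b$ I would prove by strong induction on $\dim C$ from the recursion of Theorem~\ref{theorem: recursion formula}. Let $C$ be unipotent with largest block $c<b$; in
\[
r_{A,B}^C = X_A(C)\,X_B(C)-\sum_{\dim M<\dim C} r_{A,B}^M\,X_M(C)
\]
the first term vanishes because $X_B(C)=0$: a $C$-invariant $W$ with $C|_W\sim B$ would make $(C-I)^{b-1}$ nonzero on $\F_q^{\dim C}$, contradicting $(C-I)^{c}=0$ together with $c\leq b-1$. For a summand with $r_{A,B}^M\neq 0$, the upper-bound step shows $M$ is unipotent with largest block $\leq b$; if that block exceeds $c$ then $X_M(C)=0$ by the same block-size obstruction, and otherwise $M$ is unipotent with largest block $<b$ and $\dim M<\dim C$, so $r_{A,B}^M=0$ by the inductive hypothesis. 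Hence every summand vanishes and $r_{A,B}^C=0$, while the base cases $\dim C<\max(\dim A,\dim B)$ hold vacuously. Combining the two bounds gives the claim.

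\emph{Expected obstacle.} The delicate half is the upper bound: there the leading term $X_A(C)X_B(C)$ of the recursion need not vanish for $C$ with a block of size $>b$ (already $A=J_1$, $B=J_b$, $C=J_{b+1}$ give $X_A(C)X_B(C)=1$), so a naive induction fails and one genuinely needs the telescoping bookkeeping packaged in Lemma~\ref{lemma: meta}; the lower bound, by contrast, is a soft dimension count. The one point to verify with care is the uniqueness assertion $X_{C'}(C)=1$ for $C'=C|_{\ker(N^b)}$, which I expect to follow purely from the ``dimension forces containment'' argument sketched above.
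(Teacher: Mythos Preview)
Your proof is correct and follows essentially the same strategy as the paper's: invoke Corollary~\ref{corollary: no new eigenvalues} to reduce to unipotent $C$, apply Lemma~\ref{lemma: meta} for the upper bound, and handle the lower bound by induction on $\dim C$ via the recursion of Theorem~\ref{theorem: recursion formula}. The only cosmetic differences are that the paper describes the reduced matrix $C'$ as ``replace every block of size $>b$ by one of size $b$'' (which is exactly your $C|_{\ker(N^b)}$), verifies $X_{C'}(C)=1$ by appeal to Theorem~\ref{theorem:general evaluations} rather than your direct dimension argument, and phrases the lower bound as a minimal-counterexample argument rather than explicit strong induction. Your treatment of the non-unipotent case inside the verification of the hypotheses of Lemma~\ref{lemma: meta} is in fact slightly more careful than the paper's, which dispatches it in a preliminary sentence.
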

This restriction
greatly reduces the necessary calculations involving the recursive algorithm described above. The simplification will feature prominently in the next section.
\begin{proof}
By Corollary~\ref{corollary: no new eigenvalues}, $r_{A,B}^C$ can only be nonzero when $C$ is unipotent. We apply Lemma~\ref{lemma: meta} to the collection of unipotent matrices $\mathcal{P}$ with largest Jordan block of size $\leq b$. For any $C\notin P$ define $C'$ to be the result of replacing all Jordan blocks of size $>b$ in $C$ to ones of size $b$. By Corollary~\ref{corollary: largest block}, it is clear that $X_D(C')=X_D(C)$ for any $D\in \mathcal{P}$. Furthermore, by Theorem~\ref{theorem:general evaluations} it is easy to verify that $X_C(C')=1$. Therefore, the collection $\mathcal{P}$ satisfies the conditions of Lemma $\ref{lemma: meta}$, so $r_{A,B}^C \neq 0$ only when the largest Jordan block of $C$ has size $\leq b$. 

We next prove that there are no matrices $C$ with largest Jordan block of size $<b$ for which $r^C_{A,B}\neq 0$. For the sake of contradiction, assume the contrary and let $C$ be a counterexaple of minimal dimension, i.e. $r_{A,B}^C\neq 0$ and the maximal block size in $C$ is $<b$,  while for any matrix of strictly smaller dimension with maximal block size $<b$ the expansion coefficient vanishes.
Substitute $C$ into the product expansion. On $X_A\cdot X_B$ the evaluation is $0$ since it is so on $X_B$. On the other hand, the evaluation must be $r^C_{A,B}$ as $X_C(C)=1$ and all other terms evaluate to 0 by minimality of $C$. We arrive at a contradiction as $r^C_{A,B}=0$. Altogether, if $r_{A,B}^C \neq 0$ then largest Jordan block of $C$ has size $b$.
\end{proof}

\begin{corollary}[\textbf{Polynomiality of expansion coefficients}]\label{corollary: polynomial expansion coeff}
Consider square matrices $A$ and $B$ %over $\F_q$
such that all their eigenvalues are contained in $\mathbb{F}_{q}$. Then considered as matrices over $\F_{q^m}$, the expansion coefficient $r_{A,B}^C\neq 0$ only for conjugacy classes of matrices $C$ over $F_q$, and for every such $C$ there exists an integer polynomial $R_{A,B}^C(x)\in \mathbb{Z}[x]$ such that for every field extension $\F_{q^m}$ the product expansion coefficient is the evaluation, $r_{A,B}^C = R_{A,B}^C(q^m)$.

In other words, the product expansion has coefficients polynomial in $q$,
\[
X_A \cdot X_B = \sum_{C \text{ over }\F_q} R_{A,B}^C(q^{m}) X_C.
\]
where the statistics are considered as matrices over $\F_{q^m}$.
\end{corollary}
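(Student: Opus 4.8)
The plan is to bootstrap off the recursive formula of Theorem~\ref{theorem: recursion formula} together with the polynomiality of evaluations already established. First I would record the structural input: by Corollary~\ref{corollary: no new eigenvalues}, the only conjugacy classes $C$ with $r_{A,B}^C\neq 0$ have all eigenvalues among those of $A$ and $B$, hence (by hypothesis) in $\F_q$; so every such $C$ is conjugate over $\F_q$ to a matrix in Jordan form whose Jordan data is defined over $\F_q$, and in particular the ``list of conjugacy classes of dimension $\leq \dim(A)+\dim(B)$'' appearing in the recursion is a \emph{fixed finite list, independent of $m$}. This is the key point that makes an inductive argument on dimension go through uniformly in $m$: the index set of the recursion does not change as we pass to $\F_{q^m}$.

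Next I would set up the induction on $\dim(C)$, ordering the relevant classes $C_1,\ldots,C_N$ by nondecreasing dimension as in the proof of Lemma~\ref{lemma: meta}. The inductive claim is that each $r_{A,B}^{C_i}$, viewed as a function of $m$, agrees with a fixed polynomial $R_{A,B}^{C_i}(x)\in\mathbb Z[x]$ evaluated at $q^m$. For the base case and the induction step I invoke the recursion
\[
r_{A,B}^{C_i} = X_A(C_i,q^m)\cdot X_B(C_i,q^m) - \sum_{\dim(C_j)<\dim(C_i)} r_{A,B}^{C_j}\, X_{C_j}(C_i,q^m).
\]
By the Corollary to Theorem~\ref{theorem: reducing to unipotents} (the case where all eigenvalues lie in $\F_q$), each evaluation $X_A(C_i,q^m)$, $X_B(C_i,q^m)$, and $X_{C_j}(C_i,q^m)$ is a polynomial in $q^m$ with integer coefficients — indeed Theorem~\ref{theorem:general evaluations} gives these as products of $q$-binomials and powers of $q$, which are honest elements of $\mathbb Z[x]$ once all Jordan data is over $\F_q$. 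Then the right-hand side is a $\mathbb Z[x]$-combination (using the inductively-known polynomials $R_{A,B}^{C_j}$ and the polynomial evaluations) of polynomials in $q^m$, hence itself the evaluation at $q^m$ of a single polynomial $R_{A,B}^{C_i}(x)\in\mathbb Z[x]$. Here I should note that $X_{C_i}(C_i)=1$ identically (Proposition~\ref{prop:A smaller than B}), so no division is needed and integrality is preserved. That both $A$ and $B$ must themselves appear among the $C_i$ (or be handled as the $\dim(C)=\max$ starting classes) is automatic since their eigenvalues are in $\F_q$.

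Finally I would assemble the statement: combining the per-$C$ polynomiality with Corollary~\ref{corollary: no new eigenvalues} gives the displayed expansion $X_A\cdot X_B=\sum_{C\text{ over }\F_q} R_{A,B}^C(q^m)X_C$. The main obstacle — really the only subtle point — is making rigorous the claim that the recursion's index set and the shape of all the evaluation polynomials are genuinely independent of $m$; this hinges on the fact that conjugacy classes with eigenvalues in $\F_q$ are parametrized by combinatorial data (multisets of eigenvalues in $\F_q$ with partitions) that does not grow with $m$, and that Theorem~\ref{theorem:general evaluations} expresses evaluations purely in terms of that combinatorial data through $q$-binomials. Once that is granted, the induction is essentially formal. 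I would also remark that the same argument localized at a single Galois-conjugacy class of unipotent blocks yields the polynomials $R_{\lambda,\mu}^\nu(t)$ and the recursion of Theorem~\ref{theorem-main:recursive description}, which is what is needed for Theorem~\ref{theorem-main: product expansions}.
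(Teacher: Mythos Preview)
Your proposal is correct and follows essentially the same approach as the paper: invoke Corollary~\ref{corollary: no new eigenvalues} to restrict to conjugacy classes over $\F_q$, list them in nondecreasing dimension, and induct via the recursion of Theorem~\ref{theorem: recursion formula}, using Theorem~\ref{theorem: reducing to unipotents} (and its corollary) for the polynomiality of each evaluation. Your additional remarks about the index set being independent of $m$ and about $X_{C_i}(C_i)=1$ avoiding any division are useful clarifications, but the core argument is the same as the paper's.
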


\begin{proof}

Suppose without loss of generality that $A$ and $B$ have dimensions $n_A \leq n_B$, respectively. By Corollary~\ref{corollary: no new eigenvalues}, it follows that the coefficient $r_{A,B}^C$ in the product expansion is non-zero only if $C$ shares its eigenvalues with $A$ and $B$, thus only when $C$ has eigenvalues in $\F_q$. As a result, $r_{A,B}^C \neq 0$ only when $C$ is conjugate to a matrix over $\F_q$. So, it suffices to consider conjugacy classes of $\F_q$-matrices.

List all of the conjugacy classes of $\F_q$-matrices with dimensions between $n_B$ and $n_A+n_B$ as $C_1, C_2, \ldots, C_r$ ordered such that $\dim C_1 \leq \dim C_2 \leq \cdots \leq \dim C_r$. We show by induction that the coefficient of $X_{C_i}$ is a polynomial in $q^{m}$. For the base case $i=1$, Theorem~\ref{theorem: recursion formula} shows that the coefficient $r_{A,B}^{C_1}$ is $X_{A}(C_1, q^m) \cdot X_{B} (C_1, q^m)$.
By Theorem~\ref{theorem: reducing to unipotents}, both $X_{A}(C_1, q^m)$ and $X_{B} (C_1, q^m)$ are integer polynomials evaluated at $q^m$, and therefore so is $r^{C_1}_{A,B}$. Denote this polynomial by $R^{C_1}_{A,B}(t)$, so that $r^{C_1}_{A,B}=R^{C_1}_{A,B}(q^m)$.

%Furthermore, this polynomial $R^{C_1}_{A,B}(q^{m})$ is constant across varying $m$s because in Theorem~\ref{theorem: reducing to unipotents} $\lcm(m, d)$ and $\gcd(m, d_i)$ are always $m$ and $d_i$ respectively. 

Assume by induction that for $i \leq k$, the coefficient $r^{C_i}_{A,B}$ coincide with integer polynomials $R^{C_i}_{A,B}(q^{m})$. Then, by Theorem \ref{theorem: recursion formula} the coefficient
\begin{align*} r_{A,B}^{C_{k+1}} &= X_{A}(C_{k+1}, q^m) \cdot X_{B}(C_{k+1}, q^m) - \sum_{i=1}^k R_{A,B}^{C_i}(q^{m}) X_{C_i} (C_{k+1}, q^m).
\end{align*}

By Corollary~\ref{corollary: no new eigenvalues}, the eigenvalues in $C_i$ must be eigenvalues of $A$ or $B$, so by Theorem~\ref{theorem: reducing to unipotents}, all three of $X_{A}(C_{k+1}, q^m)$, $X_{B}(C_{k+1}, q^m)$, and $X_{C_i} (C_{k+1}, q^m)$ are integer polynomials in $q^m$. It follows that $r^{C_{k+1}}_{A,B}$ is an integer polynomial in $q^m$ as well, thus completing the induction and the proof.
\end{proof}

%\Nir{We promised in the intro that a proof of 1.2 will follow 4.6. Let's give a proof here instead of claiming that the proof of 4.6 is sufficient.}
\begin{proof}[Proof of Theorem~\ref{theorem-main: product expansions} and Theorem~\ref{theorem-main:recursive description}]
%Corollary~\ref{corollary: polynomial expansion coeff} specialized to unipotent matrices shows that product expansion coefficients coincide with polynomials in $q^m$.
By Corollary~\ref{corollary: polynomial expansion coeff}, the product expansions coefficients of $X_{J_{\lambda}}$ and $X_{J_{\mu}}$  for unipotent matrices $J_{\lambda}$ and $J_{\mu}$ coincide with integer polynomials $R_{J_\lambda,J_\mu}^{C}(x)\in \mathbb{Z}[x]$ evaluated at $q^m$. Furthermore, Corollary \ref{corollary: no new eigenvalues} shows that all matrices $C$ that contribute nontrivially to the product expansion are again unipotent. Simplify the notation as $R_{\lambda\mu}^{\nu}(x) := R_{J_\lambda,J_{\mu}}^{J_{\nu}}(x)$.

Recalling that $P_{\lambda\nu}(x)\in \mathbb{Z}[x]$ are taken so that $P_{\lambda\nu}(p^d) = X_{J_{\lambda}}(J_{\nu},p^d)$ for all prime powers, the recursive formula of Theorem \ref{theorem: recursion formula} shows
\[ R_{\lambda\mu}^{\nu}(p^d) = P_{\lambda\nu}(p^d) \cdot P_{\mu\nu}(p^d) - \sum_{\nu'<\nu} R_{\lambda\mu}^{\nu'}(p^d) P_{\nu'\nu}(p^d).
\]
Since this equality holds for infinitely many evaluations, it already holds in the ring $\mathbb{Z}[x]$. From this recursion in $\mathbb{Z}[x]$ it is clear that the polynomials $R^\nu_{\lambda\mu}(x)$ are expressible in terms of $P_{\nu'\nu''}(x)$ for various $\nu'$ and $\nu''$. In particular $R^{\nu}_{\lambda\mu}(x)$ does not depend on the characteristic $p$, as none of $P_{\nu'\nu''}(x)$ do.
\end{proof}

The sense in which the polynomials $R_{\lambda,\mu}^\nu(x)$ determine the product structure of $q$-character polynomials is given by the following formula.
\begin{theorem}\label{theorem: general coefficients in terms of unipotents}
Consider square matrices $A$ and $B$ such that their eigenvalues are contained in the field $\F_q$. Let $\alpha_1,\ldots,\alpha_k$ be the set of distinct eigenvalues of $A$ and $B$, and let $\lambda_i$ and $\mu_i$ denote the partitions enumerating the Jordan block sizes with eigenvalue $\alpha_i$ in $A$ and $B$, respectively. Then,
\begin{equation}
     X_A \cdot X_B = \sum_{\nu_1,\ldots,\nu_k} %\sum_{J_{\nu_1}(\alpha_1)\oplus\cdots\oplus J_{\nu_k}(\alpha_k)=C} 
\left(\prod_{i=1}^k R_{\lambda_i,\mu_i}^{\nu_i}(q) \right)X_{J_{\nu_1}(\alpha_1)\oplus\cdots\oplus J_{\nu_k}(\alpha_k)}
\end{equation}
where the sum goes over all $k$-tuples of partitions $\nu_1,\ldots,\nu_k$ with $\sum \|\nu_i\| \leq \dim(A)+\dim(B)$.
\end{theorem}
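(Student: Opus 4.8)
The plan is to reduce the general product expansion to the unipotent case by exploiting the two structural facts already established: that statistics attached to matrices with disjoint eigenvalue sets multiply by forming block sums (Lemma~\ref{lem:product with disjoint eigenvalues}), and that only unipotent classes with the correct eigenvalues appear in the product expansion (Corollary~\ref{corollary: no new eigenvalues}, together with Lemma~\ref{lem: changing eigenvalue} for changing eigenvalues). First I would decompose $A$ and $B$ along their generalized eigenspaces: write $A \sim \bigoplus_{i=1}^k A_i$ and $B \sim \bigoplus_{i=1}^k B_i$, where $A_i$ and $B_i$ are the restrictions to the $\alpha_i$-generalized eigenspace, so $A_i$ has Jordan type $J_{\lambda_i}(\alpha_i)$ and $B_i$ has Jordan type $J_{\mu_i}(\alpha_i)$ (allowing the zero partition when $\alpha_i$ is not an eigenvalue of that matrix). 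By the argument in the proof of Theorem~\ref{theorem: reducing to unipotents}, since the $A_i$ have pairwise disjoint eigenvalue sets, Lemma~\ref{lem:product with disjoint eigenvalues} gives $X_A = \prod_{i=1}^k X_{A_i}$ as class functions, and likewise $X_B = \prod_{i=1}^k X_{B_i}$. Hence $X_A \cdot X_B = \prod_{i=1}^k \bigl(X_{A_i}\cdot X_{B_i}\bigr)$.

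Next I would expand each factor $X_{A_i}\cdot X_{B_i}$ individually. Since $A_i$ and $B_i$ share only the single eigenvalue $\alpha_i$ (which lies in $\F_q$ by hypothesis), Corollary~\ref{corollary: no new eigenvalues} forces the expansion of $X_{A_i}\cdot X_{B_i}$ to be supported on conjugacy classes whose only eigenvalue is $\alpha_i$, i.e. classes of the form $J_{\nu_i}(\alpha_i)$. Applying Lemma~\ref{lem: changing eigenvalue} (which says the coefficients are unchanged under a uniform shift of eigenvalue, as these are determined by the nilpotent parts), the coefficient of $X_{J_{\nu_i}(\alpha_i)}$ equals the corresponding unipotent coefficient $R_{\lambda_i,\mu_i}^{\nu_i}(q)$ from Theorem~\ref{theorem-main: product expansions}. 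Thus $X_{A_i}\cdot X_{B_i} = \sum_{\nu_i} R_{\lambda_i,\mu_i}^{\nu_i}(q)\, X_{J_{\nu_i}(\alpha_i)}$, with $\|\nu_i\| \le \dim(A_i)+\dim(B_i)$.

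Finally I would multiply the $k$ expansions together and identify the resulting products of statistics. Multiplying out $\prod_{i=1}^k \sum_{\nu_i} R_{\lambda_i,\mu_i}^{\nu_i}(q)\, X_{J_{\nu_i}(\alpha_i)}$ and distributing gives a sum over $k$-tuples $(\nu_1,\dots,\nu_k)$ with coefficient $\prod_i R_{\lambda_i,\mu_i}^{\nu_i}(q)$ times $\prod_i X_{J_{\nu_i}(\alpha_i)}$. Since the matrices $J_{\nu_i}(\alpha_i)$ have pairwise disjoint eigenvalue sets (the $\alpha_i$ being distinct), Lemma~\ref{lem:product with disjoint eigenvalues} applied repeatedly collapses $\prod_i X_{J_{\nu_i}(\alpha_i)}$ to the single statistic $X_{J_{\nu_1}(\alpha_1)\oplus\cdots\oplus J_{\nu_k}(\alpha_k)}$. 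The dimension bound $\sum_i \|\nu_i\| \le \dim(A)+\dim(B)$ follows by summing the individual bounds. The main obstacle is bookkeeping rather than conceptual: one must be careful that the zero-partition cases (when $\alpha_i$ is an eigenvalue of only one of $A$, $B$) are handled consistently — in that situation one factor $X_{A_i}$ or $X_{B_i}$ is the trivial statistic $X_{(\,)}=1$, so the factor $X_{A_i}\cdot X_{B_i}$ collapses to $X_{J_{\mu_i}(\alpha_i)}$ (resp. $X_{J_{\lambda_i}(\alpha_i)}$), which is exactly $\sum_{\nu_i}R_{\lambda_i,\mu_i}^{\nu_i}(q)X_{J_{\nu_i}(\alpha_i)}$ with $R_{(\,),\mu_i}^{\nu_i} = \delta_{\nu_i,\mu_i}$ — so the uniform formula still holds, and one should remark on this explicitly.
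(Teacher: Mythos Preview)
Your proof is correct and follows essentially the same route as the paper: factor $X_A\cdot X_B$ into single-eigenvalue pieces via Lemma~\ref{lem:product with disjoint eigenvalues}, expand each piece, and recombine using that same lemma. The one place where the paper is slightly more explicit is your appeal to Lemma~\ref{lem: changing eigenvalue} to identify the expansion coefficients of $X_{J_{\lambda_i}(\alpha_i)}\cdot X_{J_{\mu_i}(\alpha_i)}$ with the unipotent $R_{\lambda_i,\mu_i}^{\nu_i}(q)$: since that lemma is stated for \emph{evaluations} rather than expansion coefficients, the paper carries this through by induction on $\|\nu_i\|$ using the recursive formula of Theorem~\ref{theorem: recursion formula}, which you could mention in one line to close the gap.
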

\begin{proof}
By Lemma~\ref{lem:product with disjoint eigenvalues},
\[ X_A \cdot X_B = \prod_{i=1}^k X_{J_{\lambda_i}(\alpha_i)} \cdot \prod_{i=1}^k X_{J_{\mu}(\alpha_i)}= \prod_{i=1}^k X_{J_{\lambda_i}(\alpha_i)} X_{J_{\mu_i}(\alpha_i)} .\]
Let us now show that
\[ X_{J_{\lambda_i}(\alpha_i)} X_{J_{\mu_i}(\alpha_i)} =  \sum_{\nu_i} R_{\lambda_i, \mu_i}^{\nu_i}(q) X_{J_{\nu_i}(\alpha_i)} .\]
Proceed by induction on $\|\nu_i\|$ with the recursive formula \eqref{eq:recursive formula}. Suppose that for all $\nu_i'$ with $\|\nu_i'\|<\|\nu_i\|$, we have $R_{J_{\lambda_i}(\alpha_i),J_{\mu_i}(\alpha_i)}^{J_{\nu_i'}(\alpha_i)}(q) = R_{\lambda_i, \mu_i}^{\nu_i'}(q)$. Then, by \eqref{eq:recursive formula},
\begin{align*}
    R_{J_{\lambda_i}(\alpha_i),J_{\mu_i}(\alpha_i)}^{J_{\nu_i'}(\alpha_i)}(q) &= X_{J_{\lambda_i}(\alpha_i)}(J_{\nu_i}(\alpha_i)) \cdot X_{J_{\mu_i}(\alpha_i)}(J_{\nu_i}(\alpha_i))-\sum_{\|\nu_i'\|< \|\nu_i\|} R_{J_{\lambda_i}(\alpha_i),J_{\mu_i}(\alpha_i)}^{J_{\nu_i'}(\alpha_i)}(q)X_{J_{\nu_i'}(\alpha_i)} (J_{\nu_i}(\alpha_i)) \\
    &= X_{J_{\lambda_i}}(J_{\nu_i}) \cdot X_{J_{\mu_i}}(J_{\nu_i})-\sum_{\|\nu_i'\|< \|\nu_i\|} R_{\lambda_i,\mu_i}^{\nu_i'}(q) X_{J_{\nu_i'}} (J_{\nu_i}) \\
    &= R_{\lambda,\mu}^{\nu}(q),
\end{align*}
where the second equality follows from Lemma~\ref{lem: changing eigenvalue}. This completes the induction. We conclude that 
\begin{align*}
     X_A \cdot X_B &= \prod_{i=1}^k X_{J_{\lambda_i}(\alpha_i)} X_{J_{\mu_i}(\alpha_i)} \\
     &= \prod_{i=1}^k \sum_{\nu_i} R_{\lambda_i, \mu_i}^{\nu_i}(q) X_{J_{\nu_i}(\alpha_i)} \\
     &= \sum_{\nu_1,\ldots,\nu_k} \left(\prod_{i=1}^k R_{\lambda_i,\mu_i}^{\nu_i}(q) \right)X_{J_{\nu_1}(\alpha_1)\oplus\cdots\oplus J_{\nu_k}(\alpha_k)},
\end{align*}
where we have rewritten the product as a sum in the last step using Lemma~\ref{lem:product with disjoint eigenvalues}.
\end{proof}

\section{Calculations of expansion coefficients}\label{sec: expansion coefficients}
We now use our recursive description of the product expansion coefficients to study their general properties and compute specific product expansion coefficients.

\subsection{Small cases}
We first endeavor to find the explicit product expansion coefficients for the product of statistics on unipotent matrices with small numbers of Jordan blocks.
 
\begin{example}[Single Jordan blocks.] \label{example:Ja*Jb}
For $b \geq a$, the following expansion holds.

    \begin{equation}\label{eq:Ja*Jb}
    X_{J_a} \cdot X_{J_b} = \begin{cases} X_{J_b}+ \left(\sum_{m=1}^{a-1} q^{2m-1}(q-1) X_{J_{m,b}}\right) + q^{2a} X_{J_{a,b}} & b>a \\
    X_{J_b}+\left(\sum_{m=1}^{a-1} q^{2m-1}(q-1) X_{J_{m,b}}\right) + q^{2a-1}(q+1) X_{J_{a,b}} & b= a
    \end{cases}
    \end{equation}
\end{example}
\begin{proof}
We will only prove the case when $b>a$ as the other case is similar.

One can find the coefficients in Equation~\eqref{eq:Ja*Jb} recursively using the algorithm in Section~\ref{sec: recursive formula}. However, it suffices to verify that Equation \eqref{eq:Ja*Jb} satisfies the recursion in Theorem \ref{theorem: recursion formula} for all the matrices $C$ such that $R_{J_a, J_b}^C(q)$ may be nonzero. By Corollary~\ref{corollary: largest Jordan block} and the fact that the dimension of $C$ is at most $a+b$, any such $C$ must be of the form $J_{1^{c_1},2^{c_2},\ldots, a^{c_a}, b}$ with $\sum_{j=1}^{a} jc_j \leq a$.

%\Nir{trying to make sense of this:}

Consider any such $C=J_{1^{c_1},2^{c_2},\ldots, a^{c_a}, b}$, letting $k = \sum_{j=1}^{a}j c_j$ denote the size of the matrix $C$ without its largest block. Note that by Lemma~\ref{lemma:equal blocks}, $X_{J_b}(C) = q^k$ and $X_{J_a}(C) = \binom{c_a+1}{1} q^{k-c_a}$.
Now, consider the evaluations $X_{J_{m,b}} (C)$ with $1 \le m \le a$. Theorem~\ref{theorem:general evaluations} implies,%
\begin{align*}X_{J_{m,b}}(C) &= \binom{\sum\limits_{i\geq m}c_i}{1}_q  q^{k+b - \left((b-m+1)+\sum\limits_{i\geq m}(i-m+1)c_i\right) -(m-1) -2(m-1)} \cdot q^{k-1}\\
&= \binom{\sum\limits_{i\geq m}c_i}{1}_q \cdot q^{2k+1-2m +\sum\limits_{i\geq m}(m-1-i)c_i}.
\end{align*}
Using the identity $(q-1)\binom{c}{1}_q = q^c-1$ we conclude,
\begin{align*}
q^{2m-1}(q-1)X_{J_{m,b}}(C) &= q^{2k}\cdot \left( q^{\sum_{i\geq m}c_i} - 1 \right)q^{\sum_{i\geq m} (m-1-i)c_i} \\
&= q^{2k}\left( q^{\sum_{i\geq m+1} (m-i)c_i} - q^{\sum_{i\geq m} (m-1-i)c_i} \right).
\end{align*}
The sum of these expressions for $1 \leq m \leq a$ telescopes, finally yielding,
\begin{equation} \label{eq:telescope}
\sum_{m=1}^{a} q^{2m-1}(q-1)X_{m,b}(C) = q^{2k}\left( q^{0} - q^{\sum_{i\geq 1}-ic_i}  \right) = q^{2k}\left(1-q^{-k} \right) = q^{k}(q^k-1).
\end{equation}
Lastly, adding $X_b(C)=q^k$ and $q^{2a-1}X_{a,b}(C)=\binom{c_a}{1}_q \cdot q^{2k-c_a }$ to \eqref{eq:telescope} gives the LHS of \eqref{eq:Ja*Jb} to be
\[
= q^{2k} + \binom{c_a}{1}_q\cdot q^{2k-c_a} = q^{2k-c_a}\left( q^{c_a}\binom{c_a}{0}_q + \binom{c_a}{1}_q \right) = q^{2k-c_a}\binom{c_a+1}{1}_q = X_{J_a}(C) \cdot X_{J_b}(C),
\]
where the third equality follows from Pascal's identity for $q$-binomial coefficients. From this equality we conclude that there is no remaining non-trivial linear contributions of any $X_C$, and thus Equation~\eqref{eq:Ja*Jb} is indeed satisfied by all conjugacy classes.
\end{proof}
%\Nir{end suggestion}

%Consider any such $C=J_{1^{c_1},2^{c_2},\ldots, a^{c_a}, b}$, letting $k = \sum_{j=1}^{a}j c_j$ denote the size of the matrix $C$ without its largest block. Note that $X_{J_b}(C) = q^k$ from Theorem~\ref{theorem:general evaluations}. Now, consider the evaluations of $X_{J_{b,m}} (C)$, where $1 \le m \le a$, and $X_{J_{a}}(C)$. By Theorem~\ref{theorem:general evaluations},
%\begin{align*}X_{J_{b,m}}(C) 
%&= \binom{\sum_{j=m}^{a}jc_j}{1}_q \cdot q^{-2m+1+k+\sum_{j=1}^{m-1} j c_j+(m-1)\sum_{j=m}^{a}c_j },\\
%X_{J_a}(C)
%&=\binom{c_a+1}{1}_q q^{(a-1) c_a + \sum_{j=1}^{a-1}jc_j}.
%\end{align*}

%Using telescopic summation, the RHS of Eq.~\eqref{eq:Ja*Jb} evaluates on $C$ as
%\begin{align*}
%    & X_{J_b}(C)+ \left(\sum_{m=1}^{a-1} q^{2m-1}(q-1) X_{J_{b,m}}(C)\right) + q^{2a} X_{J_{b,a}}(C)

%    = \binom{c_a + 1}{1}_q q^{k +(a-1) c_a + \sum_{j=1}^{a-1}jc_j}
%\end{align*}
%which is evidently equal to $X_{J_{a}}(C) X_{J_b}(C)$. Thus, there is no remaining non-trivial linear contribution for $X_C$, and Eq.~\eqref{eq:Ja*Jb} is satisfied by all conjugacy classes $C$. \end{proof}

\begin{remark}
Observe that in the expansion $X_{J_a}\cdot X_{J_{b}}$, where $b \ge a$, the only non-trivial contributions to the expansion are from conjugacy classes with one or two Jordan blocks.
Furthermore, the formula implies is that any statistic associated with a matrix with two Jordan blocks can be generated by statistics on single Jordan blocks.

\begin{corollary}
All statistics $X_{J_{a,b}}$ can be expressed as degree 2 polynomials in $\{X_{J_{n}} | n \in \mathbb{N}\}$ with coefficients in rational functions $\in \mathbb{Q}(q)$.
\end{corollary}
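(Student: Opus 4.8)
The plan is to read off the expansion of Example~\ref{example:Ja*Jb} as a linear equation that can be solved for the statistic $X_{J_{a,b}}$ in terms of the product $X_{J_a}\cdot X_{J_b}$ together with statistics $X_{J_{m,b}}$ having strictly smaller first index, and then to induct. Assuming without loss of generality that $a\le b$ (since $J_{a,b}\sim J_{b,a}$), Equation~\eqref{eq:Ja*Jb} rearranges, when $b>a$, to
\begin{equation*}
X_{J_{a,b}} = \frac{1}{q^{2a}}\Bigl( X_{J_a}\cdot X_{J_b} - X_{J_b} - \sum_{m=1}^{a-1} q^{2m-1}(q-1)\,X_{J_{m,b}} \Bigr),
\end{equation*}
and to the analogous identity with leading coefficient $q^{2a-1}(q+1)$ when $b=a$. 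The first thing to observe is that both leading coefficients, $q^{2a}$ and $q^{2a-1}(q+1)$, are units in $\mathbb{Q}(q)$; this is exactly why the corollary is stated over $\mathbb{Q}(q)$ rather than over $\mathbb{Z}[q]$.

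Next I would set up a strong induction on $s=\min(a,b)=a$, with induction statement ``$X_{J_{a,b}}$ is a polynomial of degree at most $2$ in the single-block statistics $X_{J_n}$ with coefficients in $\mathbb{Q}(q)$''. The base case $s=1$ is immediate from the two displayed identities with the empty sum $\sum_{m=1}^{0}$, giving $X_{J_{1,b}}=q^{-2}(X_{J_1}X_{J_b}-X_{J_b})$ for $b>1$ and $X_{J_{1,1}}=(q^2+q)^{-1}(X_{J_1}^2-X_{J_1})$, both manifestly degree-$2$ polynomials. For the inductive step, the right-hand side of the displayed formula for $X_{J_{a,b}}$ is a $\mathbb{Q}(q)$-combination of the degree-$2$ monomial $X_{J_a}\cdot X_{J_b}$ (or $X_{J_a}^2$ when $b=a$), the degree-$1$ term $X_{J_b}$, and the statistics $X_{J_{m,b}}$ for $1\le m\le a-1$. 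Each of the latter has $\min(m,b)=m<a$, so by the inductive hypothesis it is already a polynomial of degree $\le 2$ in the $X_{J_n}$; substituting these in keeps the total degree at most $2$. Hence $X_{J_{a,b}}$ is a degree-$\le 2$ polynomial, which closes the induction.

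I do not expect a genuine obstacle: the substantive content already resides in Example~\ref{example:Ja*Jb}, and the only points requiring any care are (i) invoking invertibility of the leading coefficient over $\mathbb{Q}(q)$, and (ii) phrasing the induction on $\min(a,b)$ so that the auxiliary statistics appearing on the right-hand side — including in the diagonal case $b=a$, where they are the $X_{J_{m,a}}$ with $m<a$ — always sit at a strictly lower level. Everything else is routine bookkeeping with total degrees of polynomials in the $X_{J_n}$.
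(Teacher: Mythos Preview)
Your argument is correct and is precisely the one the paper has in mind: the corollary is stated without proof inside the remark following Example~\ref{example:Ja*Jb}, with the sentence ``the formula implies that any statistic associated with a matrix with two Jordan blocks can be generated by statistics on single Jordan blocks'' as the only justification. Your induction on $\min(a,b)$, solving \eqref{eq:Ja*Jb} for $X_{J_{a,b}}$ after noting the leading coefficient is a unit in $\mathbb{Q}(q)$, simply makes this implicit reasoning explicit.
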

See \S\ref{subsec: conjectures - number of blocks} for a conjectural generalization.
\end{remark}

% \Nir{I think the proof is obvious... can we omit?}
% \begin{proof}
% We fix $b$ and induct on $a \leq b$. The base case $a=1$ follows as $X_{J_{b,1}} = \frac{X_{J_b} \cdot X_{J_1} - X_{J_b}}{q^2}$ from Example~\ref{example:Ja*Jb}. Assume by induction that all $X_{J_{b,m}}$ for $1 \leq m \leq k < b$ are given by degree 2 polynomials in the $X_{J_n}$'s. When $k+1<b$, Example~\ref{example:Ja*Jb} yields
% \[ X_{J_{b,k+1}} = \frac{X_{J_b}\cdot X_{J_{k}}- X_{J_b}- \left(\sum_{m=1}^{k} q^{2m-1}(q-1) X_{J_{b,m}}\right)}{q^{2k+2}} \]
% As each term on the RHS can be is expressed as a degree 2 polynomial in the $X_{J_n}$'s, we have that $X_{b,k+1}$ is similarly expressed in terms of the $X_{J_n}$'s. When $k+1 = b$, Example~\ref{example:Ja*Jb} similarly yields
% %\[ X_{J_{b,k+1}} = \frac{X_{J_b}^2- X_{J_b}-\left(\sum_{m=1}^{b-1} q^{2m-1}(q-1) X_{J_{b,m}}\right) }{q^{2b-1}(q+1)} .\]
% %For the same reason as before, this can be expressed
% a formula in terms of the $X_{J_n}$'s.
% \end{proof}

\begin{example}[Jordan matrices with $2$ or fewer blocks] \label{lemma:Jb*Ja1}
Using similar methods as the previous example, one can determine the expansion coefficients for the product of a statistic on a single Jordan block with a statistic on two Jordan blocks. For example, given $b>a>1$,
\begin{align}\label{eq:larger jordan expansion}
\begin{split}
    X_{J_{b}} \cdot &X_{J_{1,a}} = q X_{J_{1,b}} + \sum_{m=2}^{a-1} q^{2m-2}(q-1) X_{J_{m,b}} + q^{2a-1} X_{J_{a,b}} \\
    &+ q^2(q^2-1)X_{J_{1^2,b}} + \sum_{m=2}^{a-1} q^{2m+1}(q-1)X_{J_{1,m,b}} + q^{2a+2} X_{J_{1,a,b}}.
\end{split}
\end{align}
\end{example}
\noindent \textit{Proof sketch.} A similar proof to that of Example \ref{example:Ja*Jb} shows that the left and right hand sides are equal as desired.
\begin{example}[More matrices]
The following table collects calculations concerning small cases that fall outside the above general cases. This table is generated with the premise that $b$ is greater than the size of the maximal Jordan block in Term 2, a condition necessary for the expansions to stabilize. 
% \begin{comment}
% \Nir{note that in all other places in the paper we use $J_b$ for the large block... I suggest uniformizing.}

% \Nir{Also -- the formulas only hold for $n$ sufficiently large. We should say how large it has to be in general. Is it enough to be larger than the full size of the 2nd matrix?}
% \end{comment}
\begin{table}[h]\centering \label{table:product expansions}
{\renewcommand{\arraystretch}{1.4}
\fontsize{9.5}{10} \selectfont
\begin{tabular}{|c|c|c|}
\hline
Term 1 & Term 2     & Product Expansion                                                                                                                                                            \\ \hline
$X_{J_b}$   & $X_{J_{2,2}}$   & 

$q^2X_{J_{b, 2}}+(q-1)q^4X_{J_{b, 2, 1}}+q^8X_{J_{b, 2, 2}}
$                \\ \hline
$X_{J_b}$   & $X_{J_{3,2}}$   & \shortstack{$q^2X_{J_{b, 2}}+q^4X_{J_{b, 3}}+(q-1)q^4X_{J_{b, 2, 1}}+(q^2-1)q^6X_{J_{b, 2, 2}}$ \\ $+(q-1)q^6X_{J_{b, 3, 1}}+q^{10}X_{J_{b, 3, 2}}$} \\ \hline
$X_{J_b}$   & $X_{J_{4,2}}$   & \begin{tabular}{@{}c@{}} $q^2X_{J_{b,2}}+(q-1)q^3X_{J_{b,3}}+q^6X_{J_{b,4}}+ q^4(q-1)X_{J_{b,2,1}}$\\$+(q-1)^2q^5X_{J_{b,3,1}}+ (q^2-1)q^6X_{J_{b,2,2}} + q^9(q-1)X_{J_{b,3,2}}$\\$+q^8(q-1)X_{J_{b,4,1}}+q^{12}X_{J_{b,4,2}}$\end{tabular}                                                                                                                                                                                                                 \\ \hline
$X_{J_b}$   & $X_{J_{5,2}}$   &   \begin{tabular}{@{}c@{}}        $q^{2}X_{J_{b, 2}}+10q^4X_{J_{b, 2, 1}}+(q-1)q^3X_{J_{b, 3}}+(q^2-1)q^6X_{J_{b, 2, 2}}$\\ $+(q-1)^2q^5X_{J_{b, 3, 1}}+(q-1)q^5X_{J_{b, 4}}+(q-1)q^9X_{J_{b, 3, 2}}$\\$+(q-1)^2q^7X_{J_{b, 4, 1}}+q^{8}X_{J_{b, 5}}+(q-1)q^{11}X_{J_{b, 4, 2}}$\\$+(q-1)q^{10}X_{J_{b, 5, 1}}+q^{14}X_{J_{b, 5, 2}}$     \end{tabular}                                                                         \\ \hline   
$X_{J_b}$   & $X_{J_{6,2}}$   &  \begin{tabular}{@{}c@{}}$q^{2}X_{J_{b, 2}}+(q-1)q^4X_{J_{b, 2, 1}}+(q-1)q^3X_{J_{b, 3}}+(q^2-1)q^6X_{J_{b, 2, 2}}$\\$+(q-1)^2q^5X_{J_{b, 3, 1}}+(q-1)q^5X_{J_{b, 4}}+(q-1)q^9X_{J_{b, 3, 2}}$\\$+(q-1)^2q^7X_{J_{b, 4, 1}}+(q-1)q^7X_{J_{b, 5}}+(q-1)q^{11}X_{J_{b, 4, 2}}$\\$+(q-1)^2q^9X_{J_{b, 5, 1}}+q^{10}X_{J_{b, 6}}+(q-1)q^{13}X_{J_{b, 5, 2}}$\\$+(q-1)q^{12}X_{J_{b, 6, 1}}+q^{16}X_{J_{b, 6, 2}}$       \end{tabular}                                                                                           \\ \hline
$X_{J_b}$   & $X_{J_{3,3}}$   &         $q^{3}X_{J_{b, 3}}+(q-1)q^5X_{J_{b, 3, 1}}+(q-1)q^8X_{J_{b, 3, 2}}+q^{12}X_{J_{b, 3, 3}}$                                                             \\ \hline
    $X_{J_b}$   & $X_{J_{4,3}}$   &         \begin{tabular}{@{}c@{}}  $q^{3}X_{J_{b, 3}}+q^{5}X_{J_{b, 4}}+(q-1)q^5X_{J_{b, 3, 1}}+(q-1)q^8X_{J_{b, 3, 2}}$\\$+(q-1)q^7X_{J_{b, 4, 1}}+(q^2-1)q^{10}X_{J_{b, 3, 3}}+(q-1)q^{10}X_{J_{b, 4, 2}}+q^{14}X_{J_{b, 4, 3}} $   \end{tabular}                                                                                                                                                                                                                                                              \\ \hline
$X_{J_b}$   & $X_{J_{5,3}}$   &     \begin{tabular}{@{}c@{}}   $q^{3}X_{J_{b, 3}}+(q-1)q^5X_{J_{b, 3, 1}}+(q-1)q^4X_{J_{b, 4}}+(q-1)q^8X_{J_{b, 3, 2}}$\\$+(q-1)^2q^6X_{J_{b, 4, 1}}+q^{7}X_{J_{b, 5}}+(q^2-1)q^{10}X_{J_{b, 3, 3}}+(q-1)^2q^9X_{J_{b, 4, 2}}$\\$+(q-1)q^9X_{J_{b, 5, 1}}+(q-1)q^{13}X_{J_{b, 4, 3}}+(q-1)q^{12}X_{J_{b, 5, 2}}+q^{16}X_{J_{b, 5, 3}}$   \end{tabular}                                                                                                                                                                                                                                                                              \\ \hline
$X_{J_b}$   & $X_{J_{4,4}}$   &                    \begin{tabular}{@{}c@{}}     $ q^{4}X_{J_{b, 4}}+(q-1)q^6X_{J_{b, 4, 1}}+(q-1)q^9X_{J_{b, 4, 2}}$\\$+(q-1)q^{12}X_{J_{b, 4, 3}}+q^{16}X_{J_{b, 4, 4}}$\end{tabular}                                                                                                                                              \\ \hline
$X_{J_b}$   & $X_{J_{2,2,2}}$ &   $q^{4}X_{J_{b, 2, 2}}+(q-1)q^7X_{J_{b, 2, 2, 1}}+q^{12}X_{J_{b, 2, 2, 2}}$                                                                                                                 \\ \hline 
$X_{J_b}$   & $X_{J_{3,3,2}}$ &       \begin{tabular}{@{}c@{}}  $q^{5}X_{J_{b, 3, 2}}+(q-1)q^8X_{J_{b, 3, 2, 1}}+q^{8}X_{J_{b, 3, 3}}$\\$+(q-1)q^{11}X_{J_{b, 3, 3, 1}}+(q^2-1)q^{11}X_{J_{b, 3, 2, 2}}+q^{16}X_{J_{b, 3, 3, 2}}$                                          \end{tabular}                                                                                                                          \\ \hline
\end{tabular}}
\caption{Table of Product Expansions}
\end{table}%reorder the rows
\end{example}

\subsection{Observations and conjectures}\label{subsec: conjectures}

The examples computed above exhibit a number of patterns that we did not anticipate at the start of this project. We take this section to record some of our observations and suggest new conjectural structure on the ring of $q$-character polynomials.

\subsection{Stabilization for large blocks}
First, observe that in the examples above, the size of the large Jordan block $J_b$ plays only minor role in the expansion: once $b$ is larger than the dimension of the second matrix $J_{a_1,\ldots,a_k}$, the product expansion takes a fixed form but for the fact that all entries have a single `leading' Jordan block of size $b$.

\begin{proposition}[Independence of largest block]
For every partition $\lambda = (a_1\geq \ldots \geq a_k)$ there is a fixed sequence of partitions $\mu^{(1)},\ldots,\mu^{(r)}$ and polynomials $p_1(t),\ldots,p_r(t)$, such that for all $b> \| \lambda \|$ one has
\[
X_{J_b} X_{J_{\lambda}} = \sum_{i=1}^r p_i(q)X_{J_b \oplus J_{\mu^{(i)}}}.
\]
\end{proposition}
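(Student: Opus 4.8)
The plan is to prove the Independence of Largest Block proposition by combining the recursive formula for expansion coefficients (Theorem~\ref{theorem: recursion formula}) with the evaluation stabilization already established in Corollary~\ref{corollary: largest block}. The key structural input is that once $b > \|\lambda\|$, the large block $J_b$ behaves like an ``inert'' spectator: it is too big to ever be split off or absorbed into smaller blocks, so it must propagate unchanged into every conjugacy class appearing in the product expansion.

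First I would pin down the shape of the classes $C$ that can appear. By Corollary~\ref{corollary: no new eigenvalues}, every $C$ with $r_{J_b,J_\lambda}^C\neq 0$ is unipotent, and by Corollary~\ref{corollary: largest Jordan block}, its largest Jordan block has size exactly $b$. Since $\dim(C)\le b+\|\lambda\|$ and $b>\|\lambda\|$, there is room for only one block of size $b$ (a second would force $\dim C\ge 2b>b+\|\lambda\|$), and the remaining blocks form a partition $\mu$ with $\|\mu\|\le \|\lambda\|$, each part of size $\le b$ — in fact each part has size $<b$ once $b>\|\lambda\|\ge\|\mu\|$, so $C = J_b\oplus J_\mu$ with $\mu$ ranging over a finite set independent of $b$. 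This identifies the list $\mu^{(1)},\ldots,\mu^{(r)}$: it is exactly the partitions of integers $\le\|\lambda\|$.

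Next I would show the coefficient $r_{J_b,J_\lambda}^{J_b\oplus J_\mu}$ is independent of $b$ for $b>\|\lambda\|$, proving this by induction on $\|\mu\|$ using the recursion
\[
r_{J_b,J_\lambda}^{J_b\oplus J_\mu} = X_{J_b}(J_b\oplus J_\mu)\cdot X_{J_\lambda}(J_b\oplus J_\mu) - \sum_{\|\mu'\|<\|\mu\|} r_{J_b,J_\lambda}^{J_b\oplus J_{\mu'}}\, X_{J_b\oplus J_{\mu'}}(J_b\oplus J_\mu),
\]
where I have already used Corollary~\ref{corollary: largest Jordan block} to restrict the inner sum to classes of the form $J_b\oplus J_{\mu'}$ with $\|\mu'\|<\|\mu\|$ (classes with no block of size $b$ contribute $0$, as in the proof of Corollary~\ref{corollary: largest Jordan block}). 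It remains to check the evaluations on the right-hand side are independent of $b$. For the first term this is Corollary~\ref{corollary: largest block}: in $X_{J_\lambda}(J_b\oplus J_\mu)$ the matrix $J_b\oplus J_\mu$ has exactly one block of size $\ge b$ and $J_\lambda$ has no block of size $b$ at all (as $b>\|\lambda\|$), so the general evaluation formula of Theorem~\ref{theorem:general evaluations} depends only on the count of blocks of size $\ge b$, which is $1$, not on $b$ itself — and likewise $X_{J_b}(J_b\oplus J_\mu)=q^{\|\mu\|}$ by Lemma~\ref{lemma:equal blocks}. The same reasoning applies to each $X_{J_b\oplus J_{\mu'}}(J_b\oplus J_\mu)$: one checks via Theorem~\ref{theorem:general evaluations} that the size-$b$ blocks on both sides ``cancel'' in the exponent (their contribution is the $j=b$ term, which vanishes since there is exactly one block of that size on each side) leaving a value depending only on $\mu,\mu'$. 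Setting $p_i(t) := R_{J_b,J_\lambda}^{J_b\oplus J_{\mu^{(i)}}}(t)$ — well-defined by Corollary~\ref{corollary: polynomial expansion coeff} and now shown $b$-independent — completes the proof.

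The main obstacle I anticipate is the careful bookkeeping in Theorem~\ref{theorem:general evaluations} needed to verify that the size-$b$ blocks genuinely contribute nothing to the exponent of $q$ in each evaluation $X_{J_b\oplus J_{\mu'}}(J_b\oplus J_\mu)$ — one must track the terms indexed by $j=b$ in both the $\lambda$-partition and the $\mu$-partition of the formula and confirm the $q$-binomial factor is $\binom{1}{1}_q=1$ and the exponent contribution is $a_b\bigl(n_B - \cdots\bigr)$ with $a_b=1$ evaluating to a $b$-independent quantity. This is routine but error-prone; everything else follows formally from results already in hand.
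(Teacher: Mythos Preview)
Your proposal is correct and follows essentially the same route as the paper's proof: restrict the possible $C$ to the form $J_b\oplus J_\mu$ via Corollaries~\ref{corollary: no new eigenvalues} and~\ref{corollary: largest Jordan block} together with the dimension bound, then induct on $\|\mu\|$ using the recursion of Theorem~\ref{theorem: recursion formula}, checking $b$-independence of each evaluation via Theorem~\ref{theorem:general evaluations}. The paper carries out exactly the computation you anticipate as the ``main obstacle,'' writing out the $j=b$ factor explicitly and confirming that the $q$-binomial is $\binom{1}{1}_q=1$ and the exponent simplifies to $\|\mu'\|-\#(\text{parts of }\mu)$, independent of $b$.
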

\begin{proof}
Due to Corollary~\ref{corollary: largest Jordan block} and Theorem~\ref{theorem-main: product expansions}, there exists an expansion
\[ X_{J_b} X_{J_{\lambda}} = \sum_{\|\mu \| \leq \|\lambda\| } R_{J_b,J_{\lambda}}^{J_b \oplus J_{\mu}}(q) X_{J_b \oplus J_{\mu}} .\]
We will show with induction on $\|\mu\|$ that the polynomials $R_{J_b,J_{\lambda}}^{J_b \oplus J_{\mu}}(q)$ are indeed independent of $b$ once $b>\|\lambda\|$. For the base case, when $\|\mu\|=0$, we obtain $R_{J_b,J_{\lambda}}^{J_b}(q)=X_{J_{\lambda}}(J_b,q)$, which is independent of $b$ once $b\geq a_1$ as noted in Theorem~\ref{theorem:general evaluations}. Now for partition $\mu$, assume by induction that the polynomial $R_{J_b,J_{\lambda}}^{J_b \oplus J_{\mu'}}(q)$ is independent of $b$ for all $\|\mu'\| < \|\mu\|$. We obtain from Theorem~\ref{theorem: recursion formula},
\begin{equation}\label{eq: independence of largest block recursion}
    R_{J_b,J_{\lambda}}^{J_b \oplus J_{\mu}}(q) = X_{J_b}(J_{b} \oplus J_{\mu},q) \cdot X_{J_{\lambda}}(J_{b} \oplus J_{\mu},q) - \sum_{\|\mu'\|<\|\mu\|}R_{J_b,J_{\lambda}}^{J_b \oplus J_{\mu'}}(q) X_{J_{b}\oplus J_{\mu'}} (J_{b} \oplus J_{\mu},q).
\end{equation} 
The evaluation $X_{J_{\lambda}}(J_{b}\oplus J_{\mu})$ is independent of $b$ as a result of Theorem~\ref{theorem:general evaluations} since $b \geq a_1$. We next show that $X_{J_{b}\oplus J_{\mu'}} (J_{b} \oplus J_{\mu},q)$ is also independent of $b$ for every $\|\mu'\|\leq\|\mu\|$. In particular this proves that $X_{J_b}(J_{b} \oplus J_{\mu},q)$ is independent of $b$.

Assume $J_{b}\oplus J_{\mu} = J_{1^{m_1}2^{m_2}\cdots b^{m_b}}$ (where $m_b=1$) and $J_{b}\oplus J_{\mu'}=J_{1^{n_1}2^{n_2}\cdots b^{n_b}}$ (where $n_b=1$). Applying Theorem~\ref{theorem:general evaluations}, we obtain
\begin{eqnarray*}
X_{J_{b}\oplus J_{\mu'}} (J_{b} \oplus J_{\mu})=\prod_{j=1}^b \binom{\sum\limits_{i>j} (n_i-m_i)+n_j}{m_j}_q q^{m_j\left(\sum\limits_{i} i n_i - \sum\limits_{i\geq j} (i-j+1)n_i - (j-1)m_j-2(j-1)\sum\limits_{i>j} m_i -\sum\limits_{i<j} m_i\right) } \\
= \prod_{j=1}^b \binom{\sum\limits_{i>j} (n_i-m_i)+n_j}{m_j}_q q^{m_j\left(\sum\limits_{i<j} i n_i - \sum\limits_{i\geq j} (-j+1)n_i - (j-1)m_j-2(j-1)\sum\limits_{i>j} m_i -\sum\limits_{i<j} m_i\right) },
\end{eqnarray*}
from which one immediately observes that except for the term corresponding to $j=b$, every term in the product is independent of $b$. For $j=b$, the $q$-binomial is clearly independent of $b$, and the exponent of $q$ is
\begin{align*}
    & \sum_{i=1}^{b-1} in_i - \cancel{(-b+1)n_b} - \cancel{(b-1)m_b} - 2(b-1) \cdot 0  -\sum_{i=1}^{b-1}m_i \\
    =& \sum_{i=1}^{b-1} in_i-m_i = \|\mu'\| - \#(\text{parts of } \mu)
\end{align*}
which is also independent of $b$. Thus, each polynomial in the RHS of equation~\eqref{eq: independence of largest block recursion} is independent of $b$, which implies $R_{J_b,J_{\lambda}}^{J_b \oplus J_{\mu}}(q)$ is independent of $b$ as well.
\end{proof}

We note that this pattern is rather reminiscent of representation stability in that it exhibits an increasing sequence of partitions that differ only by adding one very large leading part. See \cite{church2015fi} for details of that theory.

The pattern further suggest that there is a well-defined completion of $q$-character polynomials, in which one could allow certain blocks of infinite size. It is likely that in this `stable' regime, the product structure of $q$-character polynomials will be more manageable and described in full. A plausible context for this stable range is a Deligne category of $\Gl_t(\F_q)$-representations (for a distinct but suggestive reference, see e.g. \cite{10.1093/imrn/rny144}).

\subsection{Filtration by number of blocks}\label{subsec: conjectures - number of blocks}
Next, we note that in all computed example, the product of statistics $X_{J_\lambda}$ and $X_{J_\mu}$ with $k$ and $\ell$ Jordan blocks, respectively, only has nonzero contributions from statistics with $k+\ell$ or fewer Jordan blocks.
\begin{conjecture}
If $\lambda$ and $\mu$ are any two partitions with $k$ and $\ell$ Jordan blocks, respectively, then the only nonzero expansion coefficients in
$$
X_{J_\lambda}\cdot X_{J_{\mu}} = \sum_{\nu} R_{\lambda,\mu}^\nu(q) X_{J_{\nu}}
$$
are those with $\nu$ having $k+\ell$ or fewer parts.
\end{conjecture}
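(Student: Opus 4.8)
The plan is to prove the sharper statement that each coefficient $R_{\lambda,\mu}^\nu(q)$ is honestly a count of configurations, which makes the bound on the number of parts transparent. Fix $B\in\Gl_n(\F_q)$. By the definition of the statistics (cf. \cite[Corollary 3.9]{GADISH2017450}),
\[
X_{J_\lambda}(B)\cdot X_{J_\mu}(B)\;=\;\#\bigl\{(W_1,W_2): W_1,W_2\le\F_q^n \text{ $B$-invariant},\ B|_{W_1}\sim J_\lambda,\ B|_{W_2}\sim J_\mu\bigr\}.
\]
First I would sort these pairs by the conjugacy class of the $B$-module $W_1+W_2$ (which is again $B$-invariant): write $\nu$ for the partition with $B|_{W_1+W_2}\sim J_\nu$. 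The whole argument rests on two facts about this invariant.

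The key lemma is that such a $\nu$ always has at most $k+\ell$ parts, where $k$ and $\ell$ are the numbers of parts of $\lambda$ and $\mu$. The addition map $W_1\oplus W_2\to W_1+W_2$, $(w_1,w_2)\mapsto w_1+w_2$, is a surjection of $\F_q[t]$-modules ($t$ acting as $B$) with kernel isomorphic to $W_1\cap W_2$; hence $W_1+W_2$ is a quotient of $W_1\oplus W_2$, and the latter, being conjugate to $J_\lambda\oplus J_\mu$, has exactly $k+\ell$ Jordan blocks. So it suffices to check that a quotient of a unipotent (equivalently nilpotent) $\F_q[t]$-module has no more Jordan blocks than the module itself — immediate from $\#\mathrm{blocks}(M/W)=\dim M-\dim\bigl((B-I)M+W\bigr)\le\dim M-\dim\bigl((B-I)M\bigr)=\#\mathrm{blocks}(M)$, or from the fact that a quotient module needs no more generators than the module. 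This is the step I expect to carry the weight of the proof; although short as written, the points to verify with care are the identification of $W_1+W_2$ as the claimed quotient and that $W_1\oplus W_2$ genuinely has block partition $\lambda\cup\mu$.

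The second fact is that for each fixed $\nu$ the number of pairs $(W_1,W_2)$ with $W_1+W_2$ of type $\nu$ equals $c_\nu(q)\cdot X_{J_\nu}(B)$ for a constant $c_\nu(q)$ depending only on $\lambda,\mu,\nu,q$ — not on $B$ or $n$. Indeed, group the pairs by $U:=W_1+W_2$; every $B$-invariant $U$ with $B|_U\sim J_\nu$ is counted exactly once by $X_{J_\nu}(B)$, so it is enough that the number of pairs of submodules $(W_1,W_2)$ of $U$ with $W_i$ of type $\lambda,\mu$ and $W_1+W_2=U$ is the same for all $U$ of type $\nu$; this follows by transporting such a configuration along any $\F_q[t]$-module isomorphism between two submodules of type $\nu$ (module isomorphisms preserve invariance, conjugacy type, and the spanning condition). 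Assembling everything, $X_{J_\lambda}\cdot X_{J_\mu}=\sum_{\#\mathrm{parts}(\nu)\le k+\ell}c_\nu(q)\,X_{J_\nu}$, and since the $X_{J_\nu}$ are linearly independent — the matrix $[X_{J_\sigma}(J_\nu)]$ is unitriangular with respect to dimension by Proposition~\ref{prop:A smaller than B} — comparison with Theorem~\ref{theorem-main: product expansions} forces $R_{\lambda,\mu}^\nu(q)=c_\nu(q)$ when $\nu$ has at most $k+\ell$ parts and $R_{\lambda,\mu}^\nu(q)=0$ otherwise.

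I would mention in passing that the meta-principle of Lemma~\ref{lemma: meta} is tempting here, with $\mathcal P$ the unipotent classes having at most $k+\ell$ blocks; but it seems to demand, for each $\nu$ with more parts, a strictly smaller class $\nu'$ satisfying the normalization $X_{J_{\nu'}}(J_\nu)=1$ together with matching evaluations against all of $\mathcal P$, and the obvious candidates (delete a part of size one, or shorten the longest block) already fail the normalization — for instance $X_{J_{3,2}}(J_{3,3})=q+1$. The configuration-counting route above avoids this difficulty, which is why I would pursue it. (As a bonus it shows $R_{\lambda,\mu}^\nu(q)$ is a nonnegative integer at every prime power, hence — by Corollary~\ref{corollary: polynomial expansion coeff} — a polynomial taking nonnegative values there.)
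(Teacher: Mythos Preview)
The paper does not prove this statement---it is left as an open conjecture in \S\ref{subsec: conjectures - number of blocks}, supported only by the examples tabulated in \S\ref{sec: expansion coefficients}. Your argument is correct and resolves it. The idea of sorting pairs $(W_1,W_2)$ by the $\F_q[t]$-module type of their span $W_1+W_2$ is the right organizing principle: the bound on the number of parts follows because the addition map exhibits $W_1+W_2$ as a quotient of $W_1\oplus W_2\cong J_\lambda\oplus J_\mu$, and a quotient needs no more cyclic generators than the module itself. The constancy of the fiber count over each $U$ of fixed type $\nu$ is a clean transport-of-structure argument, and the identification $R_{\lambda,\mu}^\nu=c_\nu$ via unitriangularity of $[X_{J_\sigma}(J_\tau)]$ (Proposition~\ref{prop:A smaller than B}) is valid since only finitely many partitions with $\|\nu\|\le\|\lambda\|+\|\mu\|$ are in play.

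Your diagnosis of why Lemma~\ref{lemma: meta} does not readily apply is also accurate: the obvious candidate reductions $\nu\rightsquigarrow\nu'$ fail the normalization $X_{J_{\nu'}}(J_\nu)=1$, so the paper's recursive machinery on its own does not seem to reach this result. Your direct configuration count is therefore genuinely new relative to the paper's methods, and it delivers the bonus you mention---that each $R_{\lambda,\mu}^\nu(q)$ is a nonnegative integer at every prime power---which is invisible from the alternating recursion of Theorem~\ref{theorem-main:recursive description}.
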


Conversely, we remarked above that every statistic with $1$ or $2$ Jordan blocks is expressible as a polynomial of degree $2$ in the single-blocked $X_{J_b}$. We conjecture that they in fact generate the entire ring of $q$-character polynomials.
\begin{conjecture}
For every partition $(a_1\geq \ldots \geq a_k)$, there is a polynomial of $P(t_1,\ldots,t_{a_1})$ degree $k$ with coefficients in $\mathbb{Q}(q)$ such that
\[
X_{J_\lambda} = P(X_{J_1},X_{J_2},\ldots,X_{J_{a_1}}).
\]
\end{conjecture}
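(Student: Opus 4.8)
The plan is to induct on $k$, the number of parts of $\lambda$, peeling off one block at a time via the product expansions of Theorem~\ref{theorem-main: product expansions}. Write $\lambda = (a_1\ge a_2\ge\cdots\ge a_k)$ and $\lambda^- = (a_2\ge\cdots\ge a_k)$, a partition with $k-1$ parts whose largest part $a_2$ satisfies $a_2\le a_1$. By the inductive hypothesis $X_{J_{\lambda^-}}$ is a polynomial of degree $k-1$ in $X_{J_1},\ldots,X_{J_{a_2}}$, so it would suffice to extract $X_{J_\lambda}$ from the product $X_{J_{a_1}}\cdot X_{J_{\lambda^-}}$; equivalently, one expands the product of single-block statistics $X_{J_{a_1}}X_{J_{a_2}}\cdots X_{J_{a_k}}$ directly into the basis $\{X_{J_\nu}\}$. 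By Corollary~\ref{corollary: largest Jordan block} every $\nu$ that occurs has largest part exactly $a_1$, so all such $X_{J_\nu}$ are polynomials in $X_{J_1},\ldots,X_{J_{a_1}}$ by the induction.

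The core of the argument is a triangularity statement. Introduce a partial order $\preceq$ on partitions refining, in order: ``fewer parts'', then ``same number of parts but strictly smaller total size'', then dominance order among partitions of equal size and equal number of parts. One wants to show
\[
X_{J_{a_1}}X_{J_{a_2}}\cdots X_{J_{a_k}} \;=\; c_\lambda(q)\,X_{J_\lambda} \;+\; \sum_{\mu\prec\lambda} c_\mu(q)\,X_{J_\mu}, \qquad c_\lambda(q)\in\mathbb{Q}(q)^\times.
\]
Granting this, solve for $X_{J_\lambda} = c_\lambda(q)^{-1}\bigl(X_{J_{a_1}}\cdots X_{J_{a_k}} - \sum_{\mu\prec\lambda} c_\mu(q)X_{J_\mu}\bigr)$: each $X_{J_\mu}$ with $\mu\prec\lambda$ has either strictly fewer parts (handled by the outer induction on $k$) or the same number of parts but strictly smaller total size, or equal size and lower dominance (handled by an inner induction on the pair $(\|\mu\|,\text{dominance})$). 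Tracking degrees — the new top-degree monomial $X_{J_{a_1}}\cdots X_{J_{a_k}}$ is not cancelled by lower-order substitutions — shows the resulting expression for $X_{J_\lambda}$ has degree exactly $k$. The base cases $k\le 2$ are exactly Example~\ref{example:Ja*Jb}, where $c_\lambda(q)=q^{2a}$ or $q^{2a-1}(q+1)$ and all other contributing terms have strictly fewer parts or strictly smaller size, so no dominance comparison is even needed.

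Establishing the triangularity statement requires three ingredients, of increasing difficulty. First, the expansion of $X_{J_{a_1}}\cdots X_{J_{a_k}}$ must be supported only on partitions with at most $k$ parts; this is the ``filtration by number of blocks'' of \S\ref{subsec: conjectures - number of blocks}, itself currently a conjecture, which would have to be proved first, plausibly by exhibiting for every class $C$ with more Jordan blocks than $A\oplus B$ a suitable smaller class $C'$ satisfying the hypotheses of Lemma~\ref{lemma: meta}. Second, one must show the leading coefficient $c_\lambda(q)$ is a nonzero element of $\mathbb{Q}(q)$; via the recursion of Theorem~\ref{theorem-main:recursive description} and the explicit evaluations of Theorem~\ref{theorem:general evaluations} this should reduce to checking that the leading term of $P_{(a_1),\lambda}(q)\,P_{\lambda^-,\lambda}(q)$ survives the subtraction of $\sum R^{\nu}_{(a_1),\lambda^-}(q)P_{\nu,\lambda}(q)$. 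Third — and this is the main obstacle — one must rule out any contribution $X_{J_\nu}$ with $\nu$ having exactly $k$ parts, $\|\nu\|=\|\lambda\|$, and $\nu\not\preceq\lambda$: for $k\le 2$ no such $\nu$ exists, but already at $k=3$ one meets genuine clashes (e.g. $(3,3,1)$ versus $(3,2,2)$), so one genuinely needs a vanishing result of the form $R^{\nu}_{(a_1),\lambda^-}(q)=0$ for $\nu\not\preceq\lambda$. Proving such vanishing — either by a carefully chosen collection $\mathcal P$ in Lemma~\ref{lemma: meta}, or by a direct combinatorial analysis of the evaluation polynomials of Theorem~\ref{theorem:general evaluations} — amounts to precisely the structural control of the coefficients $R^{\nu}_{\lambda,\mu}$ that is not yet available, which is why the statement is recorded here only as a conjecture.
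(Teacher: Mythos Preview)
This statement is recorded in the paper as a \emph{conjecture}; the paper offers no proof of it, so there is nothing to compare your proposal against. What you have written is not a proof but a candid proof strategy, and you say so yourself: you lay out an induction on the number of parts via the product $X_{J_{a_1}}\cdots X_{J_{a_k}}$, isolate the three missing ingredients --- the filtration-by-number-of-blocks conjecture of \S\ref{subsec: conjectures - number of blocks}, nonvanishing of the leading coefficient $c_\lambda(q)$, and the dominance-type vanishing $R^{\nu}_{(a_1),\lambda^-}(q)=0$ for $\nu\not\preceq\lambda$ --- and correctly conclude that the last of these is precisely the structural control that is not yet available. As a roadmap this is reasonable and matches the spirit in which the paper presents the conjecture.

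One small sharpening: your inner induction on $(\|\mu\|,\text{dominance})$ among $k$-part partitions already \emph{presupposes} the triangularity you are trying to establish, so ingredient (3) is not a mere convenience but is genuinely load-bearing for the argument to close. Without it, one cannot solve for $X_{J_\lambda}$ recursively; one would instead have to argue that the change-of-basis matrix from single-block monomials to the $X_{J_\nu}$ (both indexed by partitions with at most $k$ parts) is invertible over $\mathbb{Q}(q)$, which is a different and also open problem. Either formulation remains conjectural for $k\ge 3$, consistent with the paper's treatment.
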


To further support these conjecture, we extrapolate from the patterns found in Example~\ref{table:product expansions} and propose the following concrete expansions.
\begin{conjecture}
Let $m,n$ be fixed positive integers.
\[X_{J_n} \cdot X_{J_{m,m}}=
\begin{cases}
q^mX_{J_{n,m}}+\sum\limits_{k=1}^{m-1}q^{3k+m-1}(q-1)X_{J_{n,m,k}}+q^{4m}X_{J_{n,m,m}} & \text{if } n>m\\
q^{m-1}\binom{2}{1}_qX_{J_{n,m}}+\sum\limits_{k=1}^{m-1}q^{3k+m-2}(q^2-1)X_{J_{n,m,k}}+q^{4m-2}\binom{3}{1}_qX_{J_{n,m,m}} & \text{if } n=m\\
q^{n-1}\binom{2}{1}_qX_{J_{m,m}}+\sum\limits_{k=1}^{n-1}q^{3k+n-2}(q^2-1)X_{J_{m,m,k}}+q^{4n}X_{J_{m,m,n}} & \text{if } n<m.
\end{cases}\]
\end{conjecture}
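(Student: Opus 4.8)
The plan is to follow the template of Example~\ref{example:Ja*Jb}: rather than running the recursive algorithm of \S\ref{sec: recursive formula} to \emph{discover} the coefficients, we take the claimed right-hand side as an ansatz and verify directly that it satisfies the recursion of Theorem~\ref{theorem: recursion formula} on every conjugacy class $C$ that could contribute. Once the identity $X_{J_n}(C)\cdot X_{J_{m,m}}(C) = \sum_{\nu} R^{\nu}(q)\,X_{J_\nu}(C)$ holds for all admissible $C$, Theorem~\ref{theorem: recursion formula} forces the remaining coefficients to vanish and the proposed expansion follows.

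\textbf{Step 1: cut down to finitely many classes.} By Corollary~\ref{corollary: no new eigenvalues} only unipotent $C$ can contribute, and by Corollary~\ref{corollary: largest Jordan block} the largest Jordan block of $C$ has size exactly $\max(n,m)$. Combined with $\dim(C)\le n+2m$, this pins down the shape of $C$ in each case. When $n>m$ we write $C = J_n \oplus J_{1^{c_1}\cdots m^{c_m}}$ with $\sum_j j c_j \le 2m$, and the only statistics on the right-hand side are $X_{J_n\oplus J_\sigma}$ with $\sigma$ empty, $\sigma=(k)$ for $1\le k<m$, $\sigma=(m)$, or $\sigma=(m,k)$ for $1\le k\le m$; when $n\le m$ the leading block has size $m$ and $C = J_{1^{c_1}\cdots m^{c_m}}$ with $c_m\ge 1$ (and $c_m\ge 2$ for any class on which $X_{J_{m,m}}$ is nonzero, via Lemma~\ref{lemma:equal blocks} with $A=J_{m^2}$), so the relevant statistics carry a leading pair $J_{m,m}$ plus a small partition $\sigma$ with $\sigma$ empty, $\sigma=(k)$, or $\sigma=(n)$ in the $n<m$ case, while $n=m$ is the genuine boundary, where $C$ may carry up to three blocks of size $m$. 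One also checks that no $C$ with a small part of size $>\max(n,m)$ or with too many small blocks can appear.

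\textbf{Steps 2--3: evaluate and telescope.} Using the General Evaluation Formula, Theorem~\ref{theorem:general evaluations}, I would compute $X_{J_n}(C)$, $X_{J_{m,m}}(C)$, and $X_{J_{\max(n,m)}\oplus J_\sigma}(C)$ (resp.\ $X_{J_{m,m}\oplus J_\sigma}(C)$) for each admissible $\sigma$; as in Example~\ref{example:Ja*Jb} these come out as a single $q$-binomial times a monomial $q^{(\cdots)}$ whose exponent is linear in the multiplicities $c_j$ of $C$. Substituting into
\[
X_{J_n}(C)\cdot X_{J_{m,m}}(C) \;=\; \sum_\sigma R^{\,\sigma}(q)\, X_{J_{\max(n,m)}\oplus J_\sigma}(C),
\]
the verification should reduce, exactly as in Example~\ref{example:Ja*Jb}, to a telescoping sum over the index $k$: applying $(q-1)\binom{c}{1}_q = q^c-1$ turns each $\binom{\sum_{i\ge k}c_i}{1}_q$ into a difference of monomials, the $1\le k\le m-1$ terms telescope, and Pascal's identity for $q$-binomials recombines the top-dimensional $\sigma=(m,k)$ and $\sigma=(m,m)$ terms (with the $\binom{2}{1}_q$, $\binom{3}{1}_q$ prefactors in the $n=m$, $n<m$ cases) back into the product $X_{J_n}(C)\cdot X_{J_{m,m}}(C)$.

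\textbf{Main obstacle and an alternative.} The difficulty is purely organizational: unlike Example~\ref{example:Ja*Jb}, the factor $X_{J_{m,m}}$ injects a two-block structure, so the admissible $C$ carry two interacting ``small'' exponents and the telescoping must be arranged in a two-index family; moreover the three cases and their boundary behaviour ($n$ near $m$, with the $\binom{2}{1}_q$/$\binom{3}{1}_q$ corrections) must be handled in parallel, and one must rule out classes $C$ with three or more non-leading blocks. An alternative route that trades combinatorics for algebra: use the $b=a$ case of Example~\ref{example:Ja*Jb} to solve $X_{J_{m,m}} = \big(q^{2m-1}(q+1)\big)^{-1}\!\big(X_{J_m}^2 - X_{J_m} - \sum_{k=1}^{m-1} q^{2k-1}(q-1)X_{J_{m,k}}\big)$ in $\mathbb{Q}(q)$, then expand $X_{J_n}\cdot X_{J_{m,m}}$ from $X_{J_n}\cdot X_{J_m}$ (Example~\ref{example:Ja*Jb}) and the two-block products of Example~\ref{lemma:Jb*Ja1}, together with the triple product $X_{J_n}X_{J_m}^2$; this is cleaner in principle but leans on the (only sketched) formula of Example~\ref{lemma:Jb*Ja1}, so the direct verification above is the safer primary plan.
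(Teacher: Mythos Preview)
This statement is presented in the paper as a \emph{Conjecture}, extrapolated from computed examples; the authors give no proof. There is therefore no argument in the paper to compare your proposal against.

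Your plan is nonetheless the natural extension of the paper's proof of Example~\ref{example:Ja*Jb} and is sound in outline. One clarification on Step~1: you should not try to ``rule out'' classes $C$ with three or more non-leading blocks in advance. Rather, as in Example~\ref{example:Ja*Jb}, the verification must be carried out on \emph{every} admissible $C$ (unipotent, largest block of size $\max(n,m)$, dimension at most $n+2m$); once the ansatz agrees with $X_{J_n}(C)\,X_{J_{m,m}}(C)$ on all such $C$, the recursion of Theorem~\ref{theorem: recursion formula} forces every other coefficient to vanish. The genuinely new feature compared to Example~\ref{example:Ja*Jb} is that $X_{J_{m,m}}(C)$ carries a factor $\binom{1+c_m}{2}_q$ (from Lemma~\ref{lemma:equal blocks}) rather than a $\binom{\cdot}{1}_q$, while each $X_{J_{n,m,k}}(C)$ from Theorem~\ref{theorem:general evaluations} carries $\binom{c_m}{1}_q$ times a second $q$-binomial in the $c_i$. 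The expected mechanism is Pascal's identity $\binom{1+c_m}{2}_q = \binom{c_m}{1}_q + q^2\binom{c_m}{2}_q$ to separate the $X_{J_{n,m,m}}$ contribution from the rest, followed by the same telescoping over $k$ as in \eqref{eq:telescope}; the $n=m$ and $n<m$ cases then pick up the $\binom{2}{1}_q$ and $\binom{3}{1}_q$ corrections from the analogous Pascal step with a larger top entry. This looks entirely tractable; the paper simply does not carry it out.
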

\begin{conjecture}
Let $n>m>k$ be positive integers. Then,
\begin{align*}
    &X_{J_n} \cdot X_{J_{m,k}}=q^kX_{J_{n,k}}+\sum\limits_{i=1}^{m-k-1}\big( q^{k+2i-1}(q-1)X_{J_{n,k+i}}\big)+q^{2m-k}X_{J_{n,m}}\\
    &+\sum\limits_{l=1}^{k-1}\left(q^{k+3l-1}(q-1)X_{J_{n,k,l}}+\sum\limits_{j=1}^{m-k-1}\big(q^{k+3k+2j-2}(q-1)^2X_{J_{n,k+j,l}}\big)+q^{2m+3l-k-1}(q-1)X_{J_{n,m,l}}\right)\\
    &+q^{4k-2}(q^2-1)X_{J_{n,k,k}}+\sum\limits_{p=1}^{m-k-1}\big(q^{4k+2p-1}(q-1)X_{J_{n,k+p,k}}\big)+q^{2m+2k}X_{J_{n,m,k}}.
\end{align*}
\end{conjecture}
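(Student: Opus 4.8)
The statement is listed as a conjecture; the natural route to a proof is the verification scheme already used for Examples~\ref{example:Ja*Jb} and~\ref{lemma:Jb*Ja1}: rather than running the recursive algorithm of \S\ref{sec: recursive formula} blindly, one checks that the displayed right-hand side satisfies the recursion of Theorem~\ref{theorem: recursion formula} on every conjugacy class $C$ that can occur with a nonzero coefficient. So the first task is to pin down that set of classes $C$. By Corollary~\ref{corollary: no new eigenvalues} every such $C$ is unipotent, and by Corollary~\ref{corollary: largest Jordan block} applied to $X_{J_{m,k}}\cdot X_{J_n}$ (using commutativity of pointwise multiplication, and noting $m\le n$) the largest Jordan block of $C$ has size exactly $n$. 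One then wants to show, via Lemma~\ref{lemma: meta} fed by the evaluation formula of Theorem~\ref{theorem:general evaluations} (and Corollary~\ref{corollary: largest block}, that only the \emph{number} of largest blocks of the second argument matters), that in fact $C = J_n\oplus J_\mu$ with $\mu$ having at most two parts, its larger part in $\{k,k+1,\ldots,m\}$ and its smaller part (if present) in $\{1,\ldots,k\}$ --- i.e. exactly the index set of the conjectured expansion. Ruling out a second block of size $n$, a part larger than $m$, or a third part is the delicate half of this step.

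Once the support is known, for each such $C = J_n\oplus J_\mu$ one computes $X_{J_n}(C)$, $X_{J_{m,k}}(C)$, and $X_{J_n\oplus J_\nu}(C)$ for the relevant $\nu$ using Theorem~\ref{theorem:general evaluations}, reducing \eqref{eq:recursive formula} to a polynomial identity in $q$ with the coefficients $R^{J_n\oplus J_\nu}_{J_n,J_{m,k}}(q)$ read off from the conjectured formula (the term $X_C(C)=1$ being absorbed into the sum). I would organize the check exactly as in the proof of Example~\ref{example:Ja*Jb}: group the summands of the conjectured expansion by the size $l$ of the smaller extra block (with $l=0$ meaning ``no second block''), and for each fixed $l$ telescope the family over the size $m'\in\{k,k+1,\ldots,m\}$ of the larger extra block, using the identity $(q-1)\binom{c}{1}_q = q^c-1$ to collapse consecutive terms as in \eqref{eq:telescope}; the resulting sum over $l$ then collapses by a second telescoping, with Pascal's identity for $q$-binomials handling the boundary terms $X_{J_{n,m'}}$ versus $X_{J_{n,m',l}}$. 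The various powers of $q$ and factors $(q-1)$, $(q-1)^2$, $(q^2-1)$ in the statement --- and the split into the regimes $m'=k$, $k<m'<m$, $m'=m$ --- are precisely what make this nested telescoping close up.

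The main obstacle is bookkeeping rather than ideas: unlike Example~\ref{example:Ja*Jb}, where a single telescoping sum sufficed, here one telescopes in two nested parameters while tracking whether an extra block of $C$ equals $k$, lies strictly between $k$ and $m$, or equals $m$ (each regime contributing a different power of $q$ and a different $q$-binomial), and one must separately dispatch the degenerate cases where $\mu$ is empty, is a single part, or has its two parts equal. A secondary difficulty is making the Lemma~\ref{lemma: meta} support reduction airtight, since a priori $\mu$ could contain a part larger than $m$ or a third part, and excluding these cleanly uses the full strength of the evaluation formula~\eqref{eq:evaluation}. An associativity-based shortcut --- substituting the expression for $X_{J_{m,k}}$ obtained from Example~\ref{example:Ja*Jb} and reducing to products $X_{J_n}\cdot X_{J_{i,m}}$ with $i<k$ --- collapses to a tautology, so the direct verification above is the honest route; its $k=1$ instance is exactly Example~\ref{lemma:Jb*Ja1}, which can serve as a consistency check on the formula.
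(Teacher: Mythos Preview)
The paper does not prove this statement --- it is listed as a conjecture, extrapolated from the patterns in Table~\ref{table:product expansions}, with no proof offered. So there is no paper argument to compare against; what follows is an assessment of your strategy on its own merits.

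Your overall plan follows the template of Example~\ref{example:Ja*Jb}, but there is a structural gap in how you propose to restrict the support. You suggest first invoking Lemma~\ref{lemma: meta} to confine the nonzero $R^{C}_{J_n,J_{m,k}}$ to the set of $C=J_n\oplus J_\mu$ with $\mu$ having at most two parts of the stated sizes, and only then verify the recursion on that restricted set. But Lemma~\ref{lemma: meta} requires, for each $C$ outside this set (e.g.\ $C=J_n\oplus J_\mu$ with $\mu$ having three parts), a strictly smaller $C'$ inside the set satisfying $X_D(C)=X_D(C')$ for \emph{every} $D$ in the set --- including all the three-block $D=J_{n,m',l}$. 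Exhibiting such a $C'$ is essentially the relevant instance of the ``filtration by number of blocks'' conjecture of \S\ref{subsec: conjectures - number of blocks}, which the paper also leaves open; Corollary~\ref{corollary: largest block} and formula~\eqref{eq:evaluation} do not hand you this reduction.

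The method actually used in Example~\ref{example:Ja*Jb} is different and avoids this trap: the verification there is carried out not on the two-block support of the claimed expansion but on \emph{every} unipotent $C=J_{1^{c_1}\cdots a^{c_a},b}$ with $\sum jc_j\le a$ --- i.e.\ on $C$ with arbitrarily many small blocks --- and the telescoping identity~\eqref{eq:telescope} holds uniformly across all of them, which simultaneously pins down the coefficients and establishes the vanishing elsewhere. To carry your argument through you must likewise verify the conjectured identity on every unipotent $C=J_{1^{c_1}\cdots n^{c_n}}$ with $c_n\ge 1$ and $\sum jc_j\le n+m+k$ (noting $c_n=2$ can occur when $n\le m+k$). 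Your nested telescoping in $(m',l)$ therefore has to close up not just when $C$ itself has shape $J_{n,m',l}$ but for arbitrary multiplicity vectors $(c_1,\ldots,c_n)$; that is the genuine bookkeeping burden, and pre-restricting the support via Lemma~\ref{lemma: meta} does not sidestep it.
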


\section{Application: Correlations of Jordan blocks}\label{subsec: applications}
Our determination of product expansion coefficients allows for calculations of expectations of products of $q$-character polynomials. These, in turn, can be used to calculate joint moments such as the correlation of two of our statistics. 

We first review the simple formula for the expected value of each statistic $X_A$. Suppose $A$ is an invertible $m \times m$ matrix. It is shown in \cite{1803.04155} that the expected value of $X_A$ over all $n \times n$ invertible matrices $B$ is independent of $n$ once $n \geq m$. One may thus calculate this expectation by considering the simplest case when $n=m$. In this case, $X_A(B) = 1$ whenever $A \sim B$ and $X_A(B)=0$ otherwise. It follows that
\begin{equation}\label{eq:expectation}
     \mathbb{E}[X_A] = \frac{|\operatorname{Conj}(A)|}{|\Gl_n(\F_q)|} =  \frac{1}{|C(A)|}
\end{equation}
where $\operatorname{Conj}(A)$ is the conjugacy class of $A$ and $C(A)$ is its centralizer.

%Consider $\Gl_n(\F_q)$ and the group action of conjugation. The size of the conjugacy class that contains $A$ is the orbit of $A$ or $\orb(A)$. 

%Therefore, we have the following proposition.

%\begin{proposition}
%For every $n\geq \dim(A)$, the random variable $X_A$ on the uniform probability space $\Gl_n(F_q)$ has expectation
%\[ \mathbb{E}[X_A] = \frac{\orb(A)}{|\Gl_n(\F_q)|} =  \frac{1}{|\stab(A)|}.\]
%\end{proposition}
It remains to determine the stabilizer of $A$.
\begin{fact}
For matrices in Jordan form with $\leq 2$ blocks, the order of their centralizes are given by,
\begin{eqnarray}
|C(J_b)| &=& q^{b-1}(q-1) \\
|C(J_{b,a})| &=& q^{3a+b-2}(q-1)^2 \quad\text{ when }b>a \\
|C(J_{b,b})| &=& q^{4b-4} (q^2-1)(q^2-q).
\end{eqnarray}
%$|C(J_b)| = q^{b-1}(q-1)$. For matrices consisting of $2$ Jordan blocks, $|C(J_{b,a})| = q^{3a+b-2}(q-1)^2$ when $b \neq a$. Furthermore, $|C(J_{b,b})| = q^{4b-4} (q^2-1)(q^2-q)$. %\Nir{do you have a reference for these results??}
\end{fact}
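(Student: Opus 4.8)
The plan is to identify each of these centralizers with the unit group of an explicit finite $\F_q$-algebra and to compute its order through the radical filtration. First I would note that since $J_\lambda = I + N_\lambda$ with $N_\lambda$ nilpotent, a matrix commutes with $J_\lambda$ if and only if it commutes with $N_\lambda$, so $C(J_\lambda)$ is precisely the group of invertible elements of the $\F_q$-algebra $\mathcal A_\lambda := \operatorname{End}_{\F_q[N_\lambda]}(\F_q^n)$. Regarding $\F_q^n$ as the $\F_q[t]$-module $\bigoplus_i \F_q[t]/(t^{\lambda_i})$ with $t$ acting by $N_\lambda$, and using the elementary fact $\dim \Hom_{\F_q[t]}\!\big(\F_q[t]/(t^m),\F_q[t]/(t^n)\big)=\min(m,n)$, one gets $\dim_{\F_q}\mathcal A_\lambda=\sum_{i,j}\min(\lambda_i,\lambda_j)$. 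Evaluating this for the three relevant partitions yields $b$ for $\lambda=(b)$, $b+3a$ for $\lambda=(b,a)$ with $b>a$, and $4b$ for $\lambda=(b,b)$.

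Next I would pin down the semisimple quotient. The indecomposable summands $\F_q[t]/(t^i)$ are pairwise non-isomorphic and have local endomorphism rings with residue field $\F_q$, so the standard structure theory of endomorphism rings of finite-length modules gives $\mathcal A_\lambda/\operatorname{rad}(\mathcal A_\lambda)\cong\prod_i M_{m_i}(\F_q)$, where $m_i$ is the number of parts of $\lambda$ equal to $i$. Since reduction modulo the radical is surjective with kernel of size $q^{\dim\mathcal A_\lambda-\sum_i m_i^2}$, and an element of $\mathcal A_\lambda$ is a unit exactly when its image is, we obtain
\[
|C(J_\lambda)| \;=\; |\mathcal A_\lambda^{\times}| \;=\; q^{\,\dim\mathcal A_\lambda-\sum_i m_i^2}\,\prod_i |\Gl_{m_i}(\F_q)|.
\]
Specializing: $\lambda=(b)$ gives $q^{b-1}(q-1)$; $\lambda=(b,a)$ with $b>a$ gives $q^{b+3a-2}(q-1)^2$; and $\lambda=(b,b)$ gives $q^{4b-4}\,|\Gl_2(\F_q)|=q^{4b-4}(q^2-1)(q^2-q)$, which are exactly the three claimed formulas.

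The only genuinely delicate ingredient is the identification $\mathcal A_\lambda/\operatorname{rad}(\mathcal A_\lambda)\cong\prod_i M_{m_i}(\F_q)$ and the resulting exponent of $q$ in $|\operatorname{rad}(\mathcal A_\lambda)|$. For at most two blocks this can be sidestepped completely: write a commuting matrix in block form $\left(\substack{\phi_{11}\ \phi_{12}\\ \phi_{21}\ \phi_{22}}\right)$, observe that $\F_q[t]$-linearity forces each $\phi_{rs}$ to be determined by a single row or column, and read the invertibility condition directly off the leading coefficients (for $(b,b)$ these leading coefficients assemble into an element of $M_2(\F_q)$ that must be invertible). Alternatively one may simply invoke the classical order formula for centralizers of unipotent elements of type $\lambda$ in $\Gl_n(\F_q)$, namely $q^{\sum_k(\lambda'_k)^2}\prod_i\prod_{j=1}^{m_i}(1-q^{-j})$ together with the identity $\sum_{i,j}\min(\lambda_i,\lambda_j)=\sum_k(\lambda'_k)^2$, and substitute the partitions $(b)$, $(b,a)$, $(b,b)$. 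Either route reduces the Fact to a short and routine computation.
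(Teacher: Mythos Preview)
Your argument is correct. The paper actually states this result as a \emph{Fact} without proof, treating the centralizer orders as standard and quoting them directly for use in the correlation computation. So there is no proof in the paper to compare against; you are supplying one where the authors chose not to.

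Your route---identifying $C(J_\lambda)$ with the unit group of $\mathcal A_\lambda=\operatorname{End}_{\F_q[t]}\bigl(\bigoplus_i\F_q[t]/(t^{\lambda_i})\bigr)$, computing $\dim\mathcal A_\lambda=\sum_{i,j}\min(\lambda_i,\lambda_j)$, and reading off the unit count from the semisimple quotient $\prod_i M_{m_i}(\F_q)$---is the standard structural argument and yields the general formula $|C(J_\lambda)|=q^{\dim\mathcal A_\lambda-\sum_i m_i^2}\prod_i|\Gl_{m_i}(\F_q)|$, from which all three cases drop out immediately. The alternative you mention, invoking the classical formula $q^{\sum_k(\lambda'_k)^2}\prod_i\prod_{j=1}^{m_i}(1-q^{-j})$, is equivalent via the identity $\sum_{i,j}\min(\lambda_i,\lambda_j)=\sum_k(\lambda'_k)^2$ and is presumably what the authors had in mind when labeling this a Fact. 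Either way, for the three partitions in question your specializations are arithmetically correct, and the direct block-matrix verification you sketch for the two-block cases is an adequate substitute for the general radical-quotient statement if one wants to keep the argument self-contained.
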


%Therefore, $\mathbb{E}[J_b] = \frac{1}{q^{b-1}(q-1)}$, $\mathbb{E}[J_{b,a}]= \frac{1}{q^{3a+b-2}(q-1)^2}$ where $b \neq a$, and $\mathbb{E}[J_{b,b}] = \frac{1}{q^{4b-4} (q^2-1)(q^2-q)}$.

%As we have a method to determine expectations of statistics, it is natural to consider joint moments of these statistics such as correlation. 

Recall that the correlation of two random variables is the measure of their association. %It is between $-1$ and $1$. When $|\rho|$ is larger, $X$ and $Y$ have a stronger relationship. Mathematically, we have the following equation.
\begin{definition}
The \textbf{correlation} of random variables $X$ and $Y$ is
\[ \rho_{XY} = \frac{\mathbb{E}[XY] - \mathbb{E}[X] \mathbb{E}[Y] }{\sqrt{(\mathbb{E}[X^2]-\mathbb{E}[X]^2)(\mathbb{E}[Y^2]-\mathbb{E}[Y]^2) }} . \]
\end{definition}

In the case of $q$-character polynomials, one can calculate expectations of products such as $\mathbb{E}[XY]$ or $\mathbb{E}[X^2]$ by first applying our product expansions to write the product as a linear combination of $q$-character polynomials. Then, the expected value is determined by \eqref{eq:expectation}. This approach lets us describe fully the correlations between statistics counting single Jordan blocks.
%Therefore, we can fully describe the correlation between statistics of single Jordan blocks with this method.

% \begin{theorem}\label{theorem: correlation}
% The correlation between the random variables $X_{J_a}$ and $X_{J_b}$ in the uniform probability space $\Gl_n(\F_q)$ is $\sqrt{\frac{q^a-1}{q^b-1}}$ where $b \geq a$ and $n \geq 2b$.
% \end{theorem}
%add thing about it being independent with size

\begin{proof}[Proof of Theorem~\ref{theorem: correlation}]
By definition
\[ \rho = \frac{\mathbb{E}[X_{J_{a}}X_{J_b}] - \mathbb{E}[X_{J_a}] \mathbb{E}[X_{J_b}] }{\sqrt{(\mathbb{E}[X_{J_a}^2]-\mathbb{E}[X_{J_a}]^2)(\mathbb{E}[X_{J_b}^2]-\mathbb{E}[X_{J_b}]^2) }}. \]

First, consider $b>a$. We determine $\mathbb{E}[X_{J_{a}}X_{J_b}]$ and $\mathbb{E}[X_{J_a}^2]$. The former is in Example~\ref{example:Ja*Jb},
\begin{align*}
     \mathbb{E}[X_{J_{a}}X_{J_b}] %&= \mathbb{E}[X_{J_b}+q(q-1) X_{J_{b,1}}+\cdots + q^{2a-3}(q-1) X_{J_{b,a-1}} + q^{2a} X_{J_{b,a}}] \\
     &= \mathbb{E}[X_{J_b}] + \sum_{i=1}^{a-1} q^{2i-1}(q-1) \mathbb{E}[X_{J_{b,i}}] + q^{2a} \mathbb{E}[X_{J_{b,a}}] \\
     %&= \frac{1}{q^{b-1}(q-1)} + \sum_{i=1}^{a-1} \frac{q^{2i-1}(q-1)}{q^{3i+b-2}(q-1)^2} + \frac{q^{2a}}{q^{3a+b-2} (q-1)^2} \\
     %&= \sum_{i=0}^{a-1} \frac{1}{q^{b+i-1} (q-1)} + \frac{1}{q^{a+b-2} (q-1)^2} \\
     &= \frac{q^a-1}{q^{a+b-2}(q-1)^2} + \mathbb{E}[X_{J_a}] \mathbb{E}[X_{J_b}].
\end{align*}
%Therefore, $\mathbb{E}[X_{J_{a}}X_{J_b}] - \mathbb{E}[X_{J_a}] \mathbb{E}[X_{J_b}] = \frac{q^a-1}{q^{a+b-2}(q-1)^2}$.
We find $\mathbb{E}[X_{J_a}^2]$ similarly,
\begin{align*}
    \mathbb{E}[X_{J_{a}}^2] %&= \mathbb{E}[X_{J_a}+q(q-1) X_{J_{a,1}}+\cdots + q^{2a-3}(q-1) X_{J_{a,a-1}} + q^{2a-1}(q+1) X_{J_{a,a}}] \\
    % &= \mathbb{E}[X_{J_a}] + \sum_{i=1}^{a-1} q^{2i-1}(q-1) \mathbb{E}[X_{J_{a,i}}] + q^{2a-1}(q+1) \mathbb{E}[X_{J_{a,a}}] \\
     %&= \frac{1}{q^{a-1}(q-1)} + \sum_{i=1}^{a-1} \frac{q^{2i-1}(q-1)}{q^{3i+a-2}(q-1)^2} + \frac{q^{2a-1}(q+1)}{q^{4a-4}(q^2-q)(q^2-1)} \\
     &= \frac{q^a-1}{q^{2a-2}(q-1)^2} + \mathbb{E}[X_{J_a}]^2.
\end{align*}
and similarly for $b$.
%Therefore, $\mathbb{E}[X_{J_{a}}^2] - \mathbb{E}[X_{J_a}]^2 = \frac{q^a-1}{q^{2a-2}(q-1)^2}$
Putting these together,
\[ \rho = \frac{ \frac{q^a-1}{q^{a+b-2}(q-1)^2} }{ \sqrt{\frac{q^a-1}{q^{2a-2}(q-1)^2} \cdot \frac{q^b-1}{q^{2b-2}(q-1)^2} } } = \sqrt{\frac{q^a-1}{q^b-1}}.\]

%The case when $b=a$ is identical and reduces to a correlation of $1$, as desired. 
%\Nir{Note that when a=b the random variables are the same... of course the correlation of something with itself is 1.}
\end{proof}

\bibliographystyle{plain}
\bibliography{references}

%\printbibliography

\end{document}